\theoremstyle{plain}% default
\newtheorem{theorem}{Theorem}
\newtheorem{lemma}[theorem]{Lemma}
\newtheorem{proposition}[theorem]{Proposition}
\newtheorem{corollary}[theorem]{Corollary}
\newtheorem{claim}{Claim}[theorem]
\theoremstyle{definition}
\newtheorem{definition}[theorem]{Definition}
\theoremstyle{remark}
\newtheorem{remark}[theorem]{Remark}
\newtheorem{example}[theorem]{Example}
\declaretheoremstyle[
  spaceabove=6pt, 
  spacebelow=6pt, 
  headfont=\normalfont\itshape, 
  bodyfont = \normalfont,
  postheadspace=1em, 
  qed=\qedsymbol, 
  headpunct={.}]
{myproof} 
\declaretheorem[style=myproof, unnumbered]{proof}
\newenvironment{enumerate-(a)}{\begin{enumerate}[label={\upshape (\alph*)}, leftmargin=2pc]}{\end{enumerate}}
\newenvironment{enumerate-(a)-r}{\begin{enumerate}[label={\upshape (\alph*)}, leftmargin=2pc,resume]}{\end{enumerate}}
\newenvironment{enumerate-(A)}{\begin{enumerate}[label={\upshape (\Alph*)}, leftmargin=2pc]}{\end{enumerate}}
\newenvironment{enumerate-(A)-r}{\begin{enumerate}[label={\upshape (\Alph*)}, leftmargin=2pc,resume]}{\end{enumerate}}
\newenvironment{enumerate-(i)}{\begin{enumerate}[label={\upshape (\roman*)}, leftmargin=2pc]}{\end{enumerate}}
\newenvironment{enumerate-(i)-r}{\begin{enumerate}[label={\upshape (\roman*)}, leftmargin=2pc,resume]}{\end{enumerate}}
\newenvironment{enumerate-(I)}{\begin{enumerate}[label={\upshape (\Roman*)}, leftmargin=2pc]}{\end{enumerate}}
\newenvironment{enumerate-(I)-r}{\begin{enumerate}[label={\upshape (\Roman*)}, leftmargin=2pc,resume]}{\end{enumerate}}
\newenvironment{enumerate-(1)}{\begin{enumerate}[label={\upshape (\arabic*)}, leftmargin=2pc]}{\end{enumerate}}
\newenvironment{enumerate-(1)-r}{\begin{enumerate}[label={\upshape (\arabic*)}, leftmargin=2pc,resume]}{\end{enumerate}}
\newenvironment{itemizenew}{\begin{itemize}[leftmargin=2pc]}{\end{itemize}}
\newcommand{\forallH}{\forall\raisebox{1.1ex}{\scaleto{\sf H}{.8ex}\kern-.2ex}}
\newcommand{\existsH}{\exists\raisebox{1.1ex}{\scaleto{\sf H}{.8ex}\kern-.2ex}}
\newcommand{\forallS}{\forall\raisebox{1.1ex}{\scaleto{\sf S}{.8ex}\kern-.2ex}}
\newcommand{\existsS}{\exists\raisebox{1.1ex}{\scaleto{\sf S}{.8ex}\kern-.2ex}}
\newcommand{\LL}{\EuScript L}
\newcommand{\HH}{\EuScript H}
\newcommand{\FF}{\EuScript F}
\newcommand{\MID}{\mathbin{:}}
\author{Claudio Agostini}
\author{Stefano Baratella}
\author{Silvia Barbina}
\author{Luca Motto Ros}
\author{Domenico Zambella}
\address[Claudio Agostini]{Institut f\"ur Diskrete Mathematik und Geometrie, TU Wien}
\address[Stefano Baratella]{Dipartimento di Matematica, Universit\`{a} di Trento}
\address[Silvia Barbina]{Sezione di Matematica, Universit\`{a} di Camerino}
\address[Luca Motto Ros, Domenico Zambella]{Dipartimento di Matematica, Universit\`{a} di Torino}
\title{Continuous logic in a classical setting}
\subjclass[2020]{03C66}
\keywords{Continuous model theory, continuous logic, positive logic}
\thanks{Claudio Agostini was partially supported by the FWF grant P 35655. Silvia Barbina and Luca Motto Ros were partially supported by PRIN2022 \textit{Models, sets and classifications}, prot. 2022TECZJA. Domenico Zambella was partially supported by PRIN2022 \textit{Logical methods in combinatorics}, prot. 2022BXH4R5}
\begin{document}

\begin{abstract}
Let ${\LL}$ be a first-order two-sorted language and consider a class of $\LL$-structures of the form $\langle M,  X \rangle$ where $M$ varies among structures of the first sort,  while $X$ is  fixed in the second sort, and it is assumed to be a compact Hausdorff space.  When $X$ is  a compact subset of the real line, one way to treat classes of this kind model-theoretically is via continuous-valued logic, as in \cite{BYBHU}.
Prior to that, Henson and Iovino proposed an approach based on the notion of positive formulas~\cite{HI}.
Their work is tailored to the model theory of Banach spaces.
Here we show that a similar approach is possible for a more general class of models.
We introduce suitable versions of elementarity, compactness, saturation, quantifier elimination and other basic tools, and we develop basic model theory.   
\end{abstract}

\maketitle
\raggedbottom

% \def\forallH{\forall\raisebox{1.1ex}{\scaleto{\sf H}{.8ex}\kern-.2ex}}
% \def\existsH{\exists\raisebox{1.1ex}{\scaleto{\sf H}{.8ex}\kern-.2ex}}
% \def\forallI{\forall\raisebox{1.1ex}{\scaleto{\sf S}{.8ex}\kern-.2ex}}
% \def\existsI{\exists\raisebox{1.1ex}{\scaleto{\sf S}{.8ex}\kern-.2ex}}

%\def\forallH{\forall}
%\def\existsH{\exists}
%\def\forallI{\forall}
%\def\existsI{\exists}

%%%%%%%%%%%%%%%%%%%%%%%%%%%%%%%%%%%
%%%%%%%%%%%%%%%%%%%%%%%%%%%%%%%%%%%
%%%%%%%%%%%%%%%%%%%%%%%%%%%%%%%%%%%
%%%%%%%%%%%%%%%%%%%%%%%%%%%%%%%%%%%
%%%%%%%%%%%%%%%%%%%%%%%%%%%%%%%%%%%
\section{Introduction}\label{intro}

\def\ceq#1#2#3{\parbox[t]{23ex}{$\displaystyle #1$}\parbox{6ex}{\hfil $#2$}{$\displaystyle #3$}}

As a motivating example for our work, consider a first-order structure $M$ equipped with a bounded distance $d: M^2\rightarrow [0,1]$, where $[0,1]$ is the real unit interval.   We  regard $d$  as a function in a two-sorted structure $\langle M, [0,1]\rangle$. We introduce a suitable two-sorted language $\LL$  and we require that the formulas in this language should remain  classically  (i.e. two-) valued, as opposed to what happens in continuous logic, where the set of truth values is the real unit interval $[0,1]$ --- see~\cite{BYBHU}  for an introduction to continuous logic and~\cite{K} for a generalisation  to structures without a metric.

More generally, we  deal with structures of the form $\langle M,  X \rangle$, where $M$ is some first-order structure and $X$ is a fixed compact Hausdorff space, also treated as a first-order structure, in a way that we detail in Section~\ref{uno}. 
In this regard, we point out  again that --- differently from \cite{CK}, where  formulas take values in an arbitrary compact Hausdorff space --- in our setting formulas are classically valued.

Our goal is to be able to apply basic model-theoretic results, such as compactness and the existence of elementary and saturated extensions, to structures like  $\langle M,  X\rangle$, while requiring that $X$ should stay fixed. Ours is an attempt to reconcile these two conflicting requirements to a certain extent.   For example,  with reference to our motivating example, when considering a saturated extension of $\langle M, [0,1] \rangle$, we would like the metric to still take values in $[0,1]$, rather than in an elementary extension that would possibly contain nonstandard elements. %without replacing $ X$ with an elementary extension, which is what would happen with a nonstandard analytic approach, where the metric would take values in a nonstandard extension ${}^*\mathbb{R}\succeq\mathbb{R}$. %The way we  stated it, that is clearly impossible.  

We treat the class of structures described above with an approach similar to~\cite{HI}. As in \cite{HI}, we isolate a suitable subclass of formulas $\FF\subseteq\LL$ with respect to which we prove a compactness theorem and we formulate appropriate versions of notions such as elementary extension and saturation.
We point out a difference from \cite{HI}.
In the class of formulas $\FF$ we explicitly allow quantification over elements of the topological space $X$.
Though we show that these quantifiers can be eliminated, their presence simplifies the comparison with real-valued logic--see Example~\ref{ex_Rvlogic}.
A second difference is discussed after Remark~\ref{rem3}.
%We include in $\FF$ all first-order formulas in the sort of the structure $M$.

We also borrow tools from nonstandard analysis, more precisely from nonstandard topology, to define what we call the \emph{standard part} of a structure.  The latter plays a crucial role in the proof of our compactness theorem. See \cite{D} for a concrete introduction to nonstandard analysis.

We give a brief outline of the paper. In Section~\ref{uno}, we describe our two-sorted language $\LL$, and the subclass of $\LL$-structures in which we are interested, namely those of the form $\langle M,  X\rangle$, where $M$ is any structure and $X$ is a given nonempty compact Hausdorff space. 

The language $\LL$ contains, but is not limited to, symbols for continuous function from $ X^n$ (equipped with the product topology) to $ X$ for $n \ge 0$ -- the case $n=0$ gives a constant for each element of $X$ -- and predicate symbols for the closed (equivalently: compact) subsets of $ X^n$ , for all $0<n$.

We also describe the subset $\FF$ of $\LL$-formulas with respect to which we later define our versions of elementarity and saturation: the set $\FF$ restricts  negations in such a way that the structure $ X$ stays the same in elementary extensions of $\langle M, X\rangle$. Section~\ref{ultrapws} introduces the notions of an \emph{approximation} of a formula in $\FF$,  and of \emph{approximate satisfiability}, used in Section~\ref{morphisms} to define $\FF$-elementary maps. 
In order to partly recover the connective negation, in  Section~\ref{ultrapws} we also introduce the notion of \emph{strong negation} of an $\FF$-formula.

In Section~\ref{standard_part}, we use tools from nonstandard analysis to define the \emph{standard part} $\langle N,  X \rangle$ of an elementary extension $\langle N, \,^* X\rangle$ of a structure $\langle M,  X \rangle$, thus providing the basis for the compactness theorem proved in Section~\ref{compactness} and the existence of saturated structures. In Section~\ref{compactness} we also  prove that approximate satisfiability is the same as satisfiability in models that are $\omega$-saturated with respect to the class of $\FF$-formulas.

These results allow us, in Section~\ref{monster}, to work in a monster model and to formulate the  analogue in our context of classical quantifier elimination. More precisely, we show that  each  $\FF$-formula formula $\varphi$ and each approximation $\varphi'$ of  $\varphi$ admit an interpolant $\FF$-formula that does not contain quantifiers of the sort which is associated to the elements of $X$.

In Section~\ref{monster} we formulate analogues of the Tarski-Vaught Test and the L\"owenheim-Skolem Theorem. We also introduce some technical notions that allow us to obtain, among  other results, a form of the Perturbation Lemma proved in~\cite{HI} in the monster model.
We conclude this section by applying these notions to bounded metric spaces.

%%%%%%%%%%%%%%%%%%%%%%%%%%%%%%%%%%%
%%%%%%%%%%%%%%%%%%%%%%%%%%%%%%%%%%%
%%%%%%%%%%%%%%%%%%%%%%%%%%%%%%%%%%%
%%%%%%%%%%%%%%%%%%%%%%%%%%%%%%%%%%%
%%%%%%%%%%%%%%%%%%%%%%%%%%%%%%%%%%%
\section{A class of two-sorted structures}\label{uno}

We adopt standard model-theoretic notation and conventions.
In particular,  when there is no ambiguity, we identify a language with the set of its formulas and a structure with its domain.

\medskip

 We fix a nonempty, compact Hausdorff space $X$. Recall that $X$ is a normal space.  We always assume that $ X^n$ is endowed with the product topology, for every \( n > 1 \). 
 
 We associate to $X$ a first-order language $\LL_{\sf S}$.  The subscript  $\sf S$ denotes the sort of this language, which we call the \emph{space sort}.
 The non-logical symbols of $\LL_{\sf S}$ are: 

\begin{itemizenew}
\item  
a set of $n$-ary predicate symbols $C$, one for each $n \geq 1$ and each  closed  subset $C$ of $ X^n$: for each such symbol, we write \( (x_1, \dotsc, x_n) \in C \) instead of \( C(x_1, \dotsc, x_n) \);
% ELIMINATO DA LUCA (Nov 2023)
%{\color{red}\item 
%a set of finitary function symbols $f$, where $f$ is a continuous function from $ X^n$ to $X$, for some $0\le n$. In particular, we require that $\LL_{\sf S}$ has a constant symbol for each element of $X$. } \todo{Eliminare, ora non ci costa niente includere sempre tutte le funzioni continue}
\item
a set of function symbols $f$, one for every \( n \geq 0 \) and every (uniformly) continuous $f \colon X^n \to X$.
%\item a constant symbol $r$ for each element $r$ of $ X$. 
%{\color{red}\fbox{ LE COSTANTI servono in Rmk 6 e Prop 15, sez. 4}}
\end{itemizenew}

% ELIMINATO DA LUCA (Nov 2023)
%{\color{red} Since each singleton in $X$ is closed, in the language $\LL_{\sf S}$ we can express the equality $x=r$, where $r\in  X$, by means of the formula $x\in\{r\}$. In what follows we will often write  the latter formula  as an equality. } \todo{Paragrafo un po' fuorviante, ho provato a riscriverlo in maniera pi\`u chiara in quello successivo.}

% There is some redundancy in the language \( \LL_{\sf S} \) involving equalities of the form \( x = r \) for some \( r \in X \). Indeed,  we have a constant symbol \( r \) for each element of \( X \), so that \( x = r \) is a well-formed \( \LL_{\sf S} \)-formula. On the other hand, since each singleton in \( X \) is closed, we also have a relational symbol for the set \( \{ r \} \), so that we can express the equality \( x = r \) via the \( \LL_{\sf S} \)-formula \( x \in \{ r \} \).

We  also use $X$  to denote  the structure with domain $X$ which interprets $\LL_{\sf S}$-symbols as expected;
depending on the context, $C, f$ and $r$ denote either  $\LL_{\sf S}$-symbols or their interpretations in the structure $X$.
An example of  a structure $X$ as above  is the real unit interval $[0,1]$, equipped with all closed sets \( C \subseteq [0,1]^n \), all continuous functions \( f \colon [0,1]^n \to [0,1] \), and all constants \( r \in[0,1] \). We always assume that  $[0,1]$ is equipped with its standard topology.

Next we fix an arbitrary first-order language $\LL_{\sf H}$, which we call the language of the \emph{home sort} $\sf H$.
%{\color{red}To simplify the exposition we assume that $\LL_{\sf H}$ contains a relation symbol $r_\varphi(x)$ for every formula $\varphi(x)$, and that all $\LL_{\sf H}$-structures below model $r_\varphi(x)\leftrightarrow\varphi(x)$.
% In other words, we assume the Morleyzation of $\LL_{\sf H}$.}
Finally, we let ${\LL}$ be a full two-sorted language that expands both $\LL_{\sf H}$ and $\LL_{\sf S}$, where the only new symbols  are  function symbols of sort ${\sf H}^n\to {\sf S}$, for some \( n \geq 1 \).

\begin{definition}\label{def_0}
A \emph{standard structure} is a two-sorted ${\LL}$-structure of the form $\langle  M, X\rangle$, where $M$ is some $\LL_{\sf H}$-structure and  $X$ is the structure fixed above.
\end{definition}
  
Since \( X \) is fixed, we often write \( M \) instead of \( \langle M,X \rangle \).
From now on, the capital letters $M$ and $N$ always denote standard structures.
Unless otherwise specified, we use the letters \( a \), \( b \), \dots to denote elements of \( M \), and the Greek letters \( \alpha \), \( \beta \), \dots to denote elements of \( X \).
  
The notion of standard structure is inspired by that of  normed space structure introduced in~\cite{HI}. 
In what follows, when comparing our setting with that of~\cite{HI} we  always refer to the case of two-sorted structures therein, with our  space sort $\sf S$ playing the role of the %{\color{red}\fbox{10.09 CORREZIONE} 
real unit interval $[0,1]$. %}
However, the component  of sort {\sf H}  of  a standard structure is an arbitrary first-order structure.

% In some examples we need to replace the sort ${\sf H}$ with many sorts ${\sf H}_1\dots,{\sf H}_k$.
% In this case the language $\LL_{\sf H}$ is replaced with $L_{{\sf H}_1\dots,{\sf H}_k}$, the language that governs the structure $\langle M_1\dots,M_k\rangle$.
% Models have the form $\langle M _1\dots,M_k,  X\rangle$.
% The new symbols in the language ${\LL}$ are of sort ${\sf H}_1^{n_1}\dots,{\sf H}_k^{n_k}\to {\sf S}$.

%The notion of model will be (inessentially) restricted once the monster model is introduced in %Section~\ref{monster}.

 Since we are working in a \emph{classical} model-theoretic setting,
saturated ${\LL}$-structures exist but,  when $X$ is infinite, they are not standard structures in the sense of Definition~\ref{def_0}.
For this reason, we introduce a set ${\FF}$ of formulas such that  $\LL_{\sf H}\subseteq{\FF}\subseteq {\LL}$ and every  standard structure has an ${\FF}$-elementary, ${\FF}$-saturated extension which is itself a standard structure.

%Warning: as usual, $\LL_{\sf S}$, $\LL_{\sf H}$, and ${\LL}$ denote both first-order languages and the corresponding set of formulas.
%The symbols ${\FF}$ and ${\HH}$ defined below denote only sets of ${\LL}$-formulas.

%{\color{blue} {\color{red}\fbox{DEFINIZIONE 2 RISCRITTA  05.09}} 
\begin{definition}\label{def_LL} 
The set ${\FF}$ is the least subset  of  $\LL$ such that:
\begin{enumerate-(i)}
\item \label{def_LL-i}
 all formulas of $\LL_{\sf H}$ belong to $\FF$;
\item\label{def_LL-ii} 
all formulas of the form $t(x\,;\eta)\in C$ belong to $\FF$, where $C$ is a closed subset of $X^{n}$, for some $n \geq 1$, and $t(x\,;\eta)$ is an $n$-tuple of terms, each  of sort ${\sf H}^{|x|}\times {\sf S}^{|\eta|}\to {\sf S}$;
 \item
 $\FF$ is closed  under the Boolean connectives $\wedge$ and $\vee$;
\item
$\FF$ is closed  under the quantifiers $\forallH$, $\existsH$ of sort ${\sf H}$  and the quantifiers $\forallS$, $\existsS$ of sort ${\sf S}$.
\end{enumerate-(i)}

We denote by ${\HH}$ the subset of  ${\FF}$ formed by the formulas that do not contain quantifiers of sort ${\sf S}$.
\end{definition}

We adopt the convention that, in a term or in a formula,  Latin letters $x,y,z,\dots$ denote tuples of variables   of sort ${\sf H}$ and Greek letters $\eta,\varepsilon,\dots$ denote tuples of variables of sort ${\sf S}$. Hence we can safely drop the superscript from a quantifier.
Formulas of type~\ref{def_LL-i} and~\ref{def_LL-ii} in Definition~\ref{def_LL} are called  $\FF$-atomic. 

\begin{remark}\label{rem3}
In general, the negation of an \( \FF \)-formula \( \varphi \) is not logically equivalent to an \( \FF \)-formula, unless all its \( \FF \)-atomic subformulas of the form \( t \in C \) are such that \( C \) is clopen.
Nevertheless,
the negation of an $\FF$-formula  is a well-formed  $\LL$-formula and can be evaluated in any (standard) structure.

% In $\FF$-atomic formulas, quantifiers of sort $\sf H$  and negations may occur. However, 

Formally, the set \( \FF \) does not contain equalities of sort ${\sf S}$. However, if $\alpha$ and $\beta$ are in $X$, the equality $\alpha=\beta$ can be expressed as $(\alpha, \beta) \in \Delta$, where $\Delta$ is the diagonal in $X\times X$.

To simplify the exposition we assume that $\LL_{\sf H}$ contains a relation symbol $r_\varphi(x)$ for every formula $\varphi(x)$, and that all $\LL_{\sf H}$-structures below model $r_\varphi(x)\leftrightarrow\varphi(x)$.
 In other words, we assume the Morleyzation of $\LL_{\sf H}$. The notion of $\FF$-atomic formula in Definition~\ref{def_LL} is consistent with this assumption, and we implicitly assume that all $\LL_{\sf H}$-formulas are atomic. Moreover, we identify $\neg r_\varphi$ with $ r_{\neg \varphi}$ etc.  \end{remark}

% {\color{blue}Quantifiers of sort \( {\sf H } \) are called \emph{internal} if they appear inside an \( \FF \)-atomic formula (equivalently: their scope is an \( \LL_{\sf H} \)-formula), and \emph{external} otherwise.} \todo{La terminologia non va bene}
% \todo{\small Tolto quntificatori interni/esterni.}

%}

The ${\HH}$-formulas extend  the so-called positive bounded formulas introduced in~\cite{HI}. Indeed,  the former are more expressive than the latter and  contain all the  $\LL_{\sf H}$-formulas.
This is a double-edged sword: the expressiveness of $\LL_{\sf H}$, even when $\LL_{\sf H}$ is empty (i.e., it only includes equality), simplifies the proof of compactness and ensures many properties of standard structures. On the other hand, it excludes interesting examples such as the classical function spaces. This issue is discussed in \cite{Z}.

% {\color{blue}{\color{red}{\fbox{10.09} *Rimuovere il commento che segue-una frase \`e stata aggiunta prima di Proposition~\ref{prop_perturbation}} This extension comes at a price: the failure of the Perturbation Lemma~\cite[Proposition~5.15]{HI}.
% However, we will partly recover the Perturbation Lemma in Proposition~\ref{prop_perturbation}.}
In Section~\ref{compactness} we will prove the Compactness Theorem for the larger class of all ${\FF}$-formulas. 
The greater expressive power of $\FF$ is useful. 
For instance, it is easy to show that the $\sup$-quantifier of real-valued logic can be recovered by means of an $\FF$-formula. 
Somewhat surprisingly, we will also prove (Propositions~\ref{prop_LHapprox1} and~\ref{prop_LHapprox2}) that ${\FF}$-formulas can be approximated, in a sense to be made precise, by  ${\HH}$-formulas.

\begin{example}\label{ex_Rvlogic}
  Let $ X=[0,1]$ be the real unit interval. 
% ELIMINATO DA LUCA (Nov 2023) 
% {\color{red} Assume that $\LL$ contains function symbols for the cut difference $\dotminus$ and the cut sum $\dotplus$ on $ X$.}  
 Let $\dotminus$ and \( \dotplus \) denote, respectively, the cut difference and the cut sum on $ X$.
  Let $t(x)$ be a term of sort ${\sf H}\to {\sf S}$. 
  Then there is an ${\FF}$-formula expressing the equality $\sup_{x} t(x)=\tau$.
  For instance, consider the formula
$$\forall x\ \big(t(x)\dotminus\tau\in\{0\}\big)\,
  {\wedge}\, \forall \varepsilon \Big(\varepsilon\in\{0\}\ \vee\ \exists x\ \big(\tau\dotminus (t(x)\dotplus\varepsilon)\in\{0\}\big)\Big).$$
 % which, in a more legible form, becomes

 % \ceq{\hfill\forallH x\ \big[t(x)\le\tau\big]}{\wedge}{\forallI \varepsilon>0\ \existsH x\ \big[\tau\le t(x)+ \varepsilon\big].}
\end{example}

%%%%%%%%%%%%%%%%%%%%%%%%%%%%%%%%%%%%
%%%%%%%%%%%%%%%%%%%%%%%%%%%%%%%%%%%%
%%%%%%%%%%%%%%%%%%%%%%%%%%%%%%%%%%%%
%%%%%%%%%%%%%%%%%%%%%%%%%%%%%%%%%%%%
%%%%%%%%%%%%%%%%%%%%%%%%%%%%%%%%%%%%
\section{Approximations and strong negations}\label{ultrapws}

In this section we generalize the notion of \emph{approximation of a positive bounded formula} introduced in~\cite{HI}. We also define the concept of \emph{strong negation} of a formula. These two notions are crucial throughout the paper. 

If $\varphi, \psi$ are $\LL$-formulas, we write $\varphi\to\psi$ to say that the implication is valid in the class of all standard
%$\LL$-
structures. We also write $\varphi\to\psi\to\eta$ as an abbreviation for $(\varphi\to\psi)\land(\psi\to\eta).$

\medskip

Let $A \subseteq M$ be a subset of an $\LL_{\sf H}$-structure. 
We let ${\LL}(A)$ be the set of formulas in the expansion of  $\LL$ containing a name for each element of $A$. The set of formulas \( \FF(A) \) and \( \HH(A) \) are defined accordingly, following Definition~\ref{def_LL}. 

\begin{definition} \label{def:approximation}
Let \( M \) be a standard structure, and 
let $\varphi,\varphi'$ be ${\FF}(M)$-formulas. We say that $\varphi'$ is an \emph{approximation} of $\varphi$, and we write $\varphi'>\varphi$, if $\varphi'$ is obtained by replacing each \( \FF \)-atomic formula of the form $t\in C$ occurring in $\varphi$  by $t\in C'$, for some closed neighborhood  $C'$ of $C$.
When no such atomic formula occurs in $\varphi$, we stipulate that $\varphi>\varphi$.
\end{definition}

Clearly, $\varphi>\varphi$ also when all \( \FF \)-atomic formulas of the form \( t \in C \) occurring in $\varphi$ are such that $C$ is clopen.
 Moreover, an easy induction on the complexity of \( \varphi \) shows that $\varphi\to\varphi'$ for each approximation  $\varphi'$  of  $\varphi$.

Note also that $>$ is a dense relation on ${\FF}(M)$, in the sense that if $\psi_1>\psi_2$ then there exists $\varphi$ such that $\psi_1>\varphi>\psi_2$. 
% ELIMINATO DA LUCA (Nov 2023)
%{\color{red}In order to prove this, it suffices to recall that the family of closed neighborhoods of each point of $X$ is a base for its neighborhood system.}
Indeed, if $C'$ is a closed neighborhood of a  compact set $C$, then the interior of \( C' \) is an open neighborhood $W$ of $C$ such that $W\subseteq C'$. By 
%compactness of $C$ 
normality of \( X \), there  is a closed neighborhood $C''$ of $C$ such that $C\subseteq C''\subseteq W\subseteq C'$, so that $C'$ is a closed neighborhood of $C''$. The density of $>$ follows.

% ELIMINATO DA LUCA (Nov 2023)
%{\color{red} We also point out that formulas in Definition~\ref{def_LL}\ref{def_LL-i} do not occur under the scope of a negation. Therefore, $\varphi\to\varphi'$ for each approximation  $\varphi'$  of  $\varphi$.}

\begin{definition} \label{def:strongnegation}
Let \( M \) be a standard structure, and 
let $\varphi$ be an ${\FF}(M)$-formula.
A \emph{strong negation} of \( \varphi \) is an \( \FF(M) \)-formula \( \tilde{\varphi} \) which is obtained from \( \varphi \) in the following way:
\begin{itemizenew}
\item
each \( \FF \)-atomic formula of the form $t\in C$ occurring in $\varphi$ is replaced by an \( \FF \)-atomic formula of the form $t\in\tilde{C}$, where $\tilde{C}$ is some closed set disjoint from $C$;
\item
each \( \FF \)-atomic formula in \( \LL_{\sf H} \) occurring in \( \varphi \) is replaced by %{\color{red}the $\FF$-atomic formula equivalent to 
its negation (see Remark~\ref{rem3});
\item 
each connective, each quantifier of sort \( \sf S \), and each quantifier of  sort \( \sf H \) are replaced by their duals, that is, \( \vee \) and \( \wedge \) are replaced by \( \wedge \) and \( \vee \), respectively, while existential (respectively, universal) quantifiers are replaced by universal (respectively, existential) quantifiers.
\end{itemizenew}
\end{definition}

We write \( \tilde{\varphi} \perp \varphi \) to say that \( \tilde{\varphi} \) is a strong negation of \( \varphi \).
By induction one can check that \( \tilde{\varphi} \to \neg \varphi \), which justifies our terminology. 
%However, in general it is not true that \( \neg \varphi \to \tilde{\varphi} \), i.e.\ \( \neg \varphi \) and \( \tilde{\varphi} \) need not be equivalent.
% ELIMINATO DA LUCA (Nov 2023)
%{\color{red}We write $\tilde{\varphi}\perp\varphi$ when $\tilde{\varphi}$ is obtained by replacing each atomic formula $t\in C$ occurring in $\varphi$ with $t\in\tilde{C}$ where $\tilde{C}$ is some compact set disjoint from $C$.
%Moreover the ${\FF}$-atomic formulas in $\LL_{\sf H}$ are replaced with their negations and each connective or quantifier (of both sorts) is replaced with its dual i.e., $\vee, \wedge, \exists,\forall$ are replaced with $\wedge,\vee,\forall,\exists,$ respectively. 
%We say that  $\tilde{\varphi}$ is a \textit{strong negation} of $\varphi$. Clearly,  $\tilde{\varphi}\rightarrow\neg\varphi$.}

\begin{lemma}\label{lem_interpolation}
  For all  $\varphi\in{\FF}(M)$
  \begin{enumerate-(1)}
    \item \label{lem_interpolation-1}
 for every \( \varphi' > \varphi \), there are strong negations \( \tilde{\varphi}_0 < \tilde{\varphi}_1 \) of \( \varphi \) such that
 \[ 
 \varphi \to \neg \tilde{\varphi}_1 \to \neg \tilde{\varphi}_0 \to \varphi' ;
 \]
 % ELIMINATO DA LUCA (Nov 2023)
 %   for every $\varphi'>\varphi$ there is a formula $\tilde{\varphi}\perp\varphi$ such that $\varphi\rightarrow\neg \tilde{\varphi}\rightarrow\varphi'$;
    \item \label{lem_interpolation-2}
    for every $\tilde{\varphi}\perp\varphi$, there is a formula $\varphi'>\varphi$ such that  $\varphi\rightarrow\varphi'\rightarrow\neg \tilde{\varphi}$.
  \end{enumerate-(1)}
\end{lemma}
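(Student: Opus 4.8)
The plan is to reduce both claims to a single monotonicity principle and then to produce the required closed sets by invoking the normality of $X^n$. The principle I would isolate first is the following: since $\FF$-formulas are built from atomic formulas using only $\wedge$, $\vee$ and the quantifiers (never $\neg$), each atomic subformula occurs positively, so whenever two $\LL(M)$-formulas $\psi_0$ and $\psi_1$ share the same logical skeleton and the same $\LL_{\sf H}$-atomic subformulas, and at each space-atomic position the set attached to $\psi_0$ is contained in the set attached to $\psi_1$, one has $\psi_0\to\psi_1$ in every standard structure. This is exactly the induction on complexity already used to justify that $\varphi\to\varphi'$ for every approximation $\varphi'>\varphi$; crucially it applies verbatim to the $\LL$-formula $\neg\tilde\varphi$, whose space-atomic positions carry the \emph{open} sets $X^n\setminus\tilde C$, with the same skeleton and $\LL_{\sf H}$-atomics as $\varphi$ (by De Morgan and the identification $\neg r_\psi=r_{\neg\psi}$ of Remark~\ref{rem3}).

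With this in hand, both parts become position-by-position set constructions, where $t\in C$ denotes a generic space-atomic subformula of $\varphi$. For~\ref{lem_interpolation-2} I am given a strong negation, so each such position carries a closed $\tilde C$ with $\tilde C\cap C=\emptyset$; since $\varphi\to\varphi'$ is automatic, it suffices to build $\varphi'$ so that $\varphi'\to\neg\tilde\varphi$, which by the principle amounts to choosing a closed neighborhood $C'$ of $C$ with $C'\cap\tilde C=\emptyset$. As $C$ and $\tilde C$ are disjoint closed sets in the normal space $X^n$, I pick disjoint open $U\supseteq C$ and $V\supseteq\tilde C$ and set $C':=X^n\setminus V$: this is closed, is a neighborhood of $C$ (since $C\subseteq U\subseteq\operatorname{int}C'$), and is disjoint from $\tilde C\subseteq V$.

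For~\ref{lem_interpolation-1} I am given an approximation, so each position carries a closed neighborhood $C'$ of $C$. The outer implication $\varphi\to\neg\tilde\varphi_1$ is just $\tilde\varphi_1\to\neg\varphi$, already known, and the middle implication $\neg\tilde\varphi_1\to\neg\tilde\varphi_0$ is the contrapositive of $\tilde\varphi_0\to\tilde\varphi_1$, which holds once $\tilde\varphi_1>\tilde\varphi_0$. So the work is to produce $\tilde\varphi_0$ and $\tilde\varphi_1$. I set $\tilde C_0:=X^n\setminus\operatorname{int}C'$, which is closed, disjoint from $C$ (since $C\subseteq\operatorname{int}C'$), and satisfies $X^n\setminus\tilde C_0=\operatorname{int}C'\subseteq C'$; by the principle this yields $\neg\tilde\varphi_0\to\varphi'$, the last implication of the chain. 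Finally, $\tilde C_0$ and $C$ are disjoint closed sets, so normality again gives a closed neighborhood $\tilde C_1$ of $\tilde C_0$ with $\tilde C_1\cap C=\emptyset$; replacing each $\tilde C_0$ by $\tilde C_1$ turns $\tilde\varphi_0$ into a formula $\tilde\varphi_1$ that is simultaneously an approximation of $\tilde\varphi_0$ and, having $\tilde C_1$ disjoint from $C$ and unchanged $\LL_{\sf H}$-atomics and duals, a strong negation of $\varphi$.

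The step I expect to be the main obstacle is not any single implication but the topological bookkeeping: one must meet two different requirements at once --- being a \emph{closed neighborhood} (an interior condition) and being \emph{disjoint from a fixed closed set} (a separation condition) --- and check that the complement and interior operations introduced by $\neg\tilde\varphi$ line up so that the set containments translate into the asserted implications. Normality of $X^n$ is precisely what reconciles these two demands, and once the monotonicity principle is separated out, the remainder is routine.
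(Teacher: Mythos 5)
Your proof is correct and follows essentially the same route as the paper's: both reduce to the atomic case, where normality of $X^n$ supplies the required closed neighborhoods and disjoint closed sets, and both handle the rest by monotonicity of $\wedge$, $\vee$ and the quantifiers (the paper phrases this as a simultaneous induction with "straightforward" inductive cases, while you factor it out as a single positivity/monotonicity principle applied position by position --- a cosmetic repackaging). Your concrete choices ($\tilde C_0=X^n\setminus\operatorname{int}C'$, then a closed neighborhood $\tilde C_1$ of $\tilde C_0$ disjoint from $C$) match the paper's $t\in X\smallsetminus U_i$ construction up to relabelling, and the degenerate case with no space-atomic subformulas is covered by the stipulation $\varphi>\varphi$.
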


\begin{proof}
We prove~\ref{lem_interpolation-1} and~\ref{lem_interpolation-2} by simultaneous  induction on formulas. The case $\varphi\in \LL_{\sf H}$ is straightforward, so we consider the case where $\varphi$ is of the form $t\in C$.
  
Let $\varphi' > \varphi$ be $t\in C'$ for some closed neighborhood \( C' \) of $C$.
 Let $U_0$ be an open set such that $C\subseteq U_0 \subseteq C'$.
By normality of \( X \), pick a closed neighborhood \( C'' \) of \( C \) satisfying \( C'' \subseteq U_0 \), and then let \( U_1 \) be any open set such that \( C \subseteq U_1 \subseteq C'' \).
Then the formulas $\tilde{\varphi}_i$  given by $t\in  X\smallsetminus U_i$ are as required in~\ref{lem_interpolation-1}.
  
Suppose now that $\tilde{\varphi} \perp \varphi$ is of the form $t\in\tilde{C}$ for some closed set $\tilde{C}$ disjoint from $C$.
  By  normality of $X$, there is  a closed neighborhood $C'$ of $C$ which is disjoint from $\tilde{C}$.
Then the \( \FF \)-formula  \( \varphi' > \varphi \) given by \( t\in C' \) is such that \( \varphi' \perp \tilde{\varphi} \) and is as required in~\ref{lem_interpolation-2}.
  
The inductive cases are straightforward.
\end{proof}

For every type $p(x \,; \eta)\subseteq{\FF}(M)$, we let
\[
p'(x \, ; \eta) = \big\{\varphi'(x \, ; \eta) \MID \varphi'>\varphi \text{ for some } \varphi(x \, ; \eta)\in p(x \, ; \eta)\big\}.
\]

In particular,  $\{\varphi(x \, ; \eta)\}' = \big\{\varphi'(x\, ;\eta) \MID \varphi'>\varphi\big\}$.

 We say that a formula $\varphi(x\,;\eta)$ is \emph{approximately satisfiable}  in a  standard structure $M$ if the set $\{\varphi(x\,;\eta)\}'$ of its approximations is satisfiable in $M$. By adapting the   example  in  \cite[Chapter 5]{HI} to the current setting, one can show that, in general, approximate satisfiability does not imply satisfiability.

%%%%%%%%%%%%%%%%%%%%%%%%%%%%%%%%%%%%
%%%%%%%%%%%%%%%%%%%%%%%%%%%%%%%%%%%%
%%%%%%%%%%%%%%%%%%%%%%%%%%%%%%%%%%%%
%%%%%%%%%%%%%%%%%%%%%%%%%%%%%%%%%%%%
%%%%%%%%%%%%%%%%%%%%%%%%%%%%%%%%%%%%
\section{Morphisms}\label{morphisms}

\def\ceq#1#2#3{\parbox[t]{35ex}{$\displaystyle #1$}\parbox{5ex}{\hfil $#2$}{$\displaystyle #3$}}

In this section we  introduce the notion of elementary map with respect to both the classes of $\HH$- and $\FF$-formulas, as well as their approximate versions, and we establish some basic facts about them.

\begin{definition} \label{def:elementarymaps}
Let $M$ and $N$ be  standard structures.
We say that a partial map $f \colon M\to N$ is \emph{${\FF}$-elementary} if for all $\varphi(x)\in{\FF}$  and all $a\in({\rm dom }f)^{|x|}$ 
\begin{equation} \tag{\( \dagger \)} \label{eq:dagger}
M \models \varphi(a) \quad \Rightarrow \quad N \models \varphi( f  a).
\end{equation}
%\ceq{(\dagger) \hfill}{ M}{\models\varphi(a)\,\Rightarrow N\models\varphi(fa).} 

 An \emph{${\FF}$-elementary embedding} is a total  ${\FF}$-elementary map. If \( M \subseteq N \) and the identity map ${\rm id}_M \colon M \to N$ is an ${\FF}$-elementary embedding we  write $M\preccurlyeq^{\FF}N$ and we say that $M$ is an \emph{${\FF}$-elementary substructure} of $N$, or that \( N \) is an \emph{\( \FF \)-elementary extension} of \( M \).
 
 The definitions of \emph{${\HH}$-elementary} map or \emph{\( \HH \)-elementary embedding} are  obtained by replacing ${\FF}$ with ${\HH}$ throughout.
 \end{definition}

Recall that the language $\LL_{\sf S}$ has a name for each element of $ X$. Hence, if  $f$ is ${\FF}$-elementary the implication \eqref{eq:dagger} actually holds for all $\varphi(x \, ; \eta)\in{\FF}$ in the following form:  for all $a\in({\rm dom }f)^{|x|}$ and all $\alpha\in  X^{|\eta|}$
$$M\models\varphi(a \, ; \alpha)\quad \Rightarrow \quad N\models\varphi(fa \,; \alpha).$$

The same remark applies to the classes of morphisms that we define next.

As ${\FF}$- and ${\HH}$-elementary maps are in particular $\LL_{\sf H}$-elementary, they are injective; however, the inverse of an ${\FF}$-elementary map  may not be ${\FF}$-elementary. 
This is because the class of $\FF$-formulas is not closed under negation, hence  from the validity  of~\eqref{eq:dagger}  for all ${\FF}$-formulas we cannot derive the converse implication for a specific \( \FF \)-formula \( \varphi \), unless \( C \) is clopen in all \( \FF \)-atomic subformulas of \( \varphi \) of the form \( t \in C \).

% that is, if for every  $\varphi(x)\in{\FF}$ and every $a\in M^{|x|}$
%
% \ceq{\hfill{\EuScript M}\models\varphi(a)}{\Rightarrow}{N\models\varphi(a)},
%

%In other words, the converse  implication in (1) may not hold for all $a$.

This notion of elementary map is dictated by our framework, in particular by our choice of working with the classical notion of satisfaction. This differs  from the approach in \cite{HI}, where \emph{approximate satisfaction} plays a crucial role. 

The approximate  ${\FF}$-morphisms  that we define next   preserve  the approximate satisfiability in \cite{HI}. They  are invertible in the class of approximate  ${\FF}$-morphisms, as discussed in Proposition~\ref{fact_HImorphisms} below.

\begin{definition} \label{def:approximateelementary}
We say that a partial map $f \colon M\to N$ is \emph{approximate ${\FF}$-elementary} if for every formula $\varphi(x)\in{\FF}$, and every $a\in({\rm dom }f)^{|x|}$
\begin{equation} \tag{\( \ddagger \)} \label{eq:ddagger}
M\models\varphi(a) \quad \Rightarrow \quad  N\models\big\{\varphi(fa)\big\}'.
\end{equation}

We define \emph{approximate ${\HH}$-elementary} maps in the same manner.
\end{definition}

Clearly,  ${\FF}$-elementarity implies its approximate version, and similarly for ${\HH}$-elementarity.

Later we will show that, under a weak saturation assumption,  the converse implication to~\eqref{eq:ddagger} holds as well  (Proposition~\ref{prop_approx}).
Furthermore, under full saturation, approximate elementary morphisms become ${\LL}$-elementary maps (Corollary~\ref{corol_Lcomplete}).

The following result holds without any  saturation assumption.

\begin{proposition}\label{fact_HImorphisms}
  If $f \colon M\to N$ is an approximate $\FF$-elementary map, then for every $\varphi(x)\in{\FF}$ and every $a\in({\rm dom }f)^{|x|}$,
\[
M\models\big\{\varphi(a)\big\}' \quad \Leftrightarrow \quad N\models\big\{\varphi(fa)\big\}'.
\]

The same holds for approximate ${\HH}$-elementary maps and \( \HH \)-formulas \( \varphi(x) \).
\end{proposition}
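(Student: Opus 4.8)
The plan is to prove the two implications separately. Throughout I write $\{\varphi(a)\}'$ for the set of all approximations of $\varphi(a)$, and I freely use that $>$ is dense on $\FF(M)$ together with the fact that the hypothesis \eqref{eq:ddagger} only transfers approximations from $M$ to $N$, never in the reverse direction.

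For the forward implication, assume $M \models \{\varphi(a)\}'$ and fix an arbitrary approximation $\psi > \varphi$; I must show $N \models \psi(fa)$. By density of $>$ I interpolate a formula $\chi$ with $\psi > \chi > \varphi$. Since $\chi > \varphi$, the formula $\chi(a)$ belongs to $\{\varphi(a)\}'$, so $M \models \chi(a)$. Applying approximate $\FF$-elementarity to the $\FF$-formula $\chi$ yields $N \models \{\chi(fa)\}'$, and since $\psi > \chi$ this gives $N \models \psi(fa)$, as required. This direction thus costs only density and a single invocation of \eqref{eq:ddagger}.

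The backward implication is the substantive one, precisely because \eqref{eq:ddagger} is one-directional; to argue against this flow I proceed by contradiction and route everything through strong negations, which are genuine $\FF$-formulas. So assume $N \models \{\varphi(fa)\}'$ but suppose toward a contradiction that $M \not\models \psi(a)$ for some $\psi > \varphi$. Using density I fix $\chi$ with $\psi > \chi > \varphi$, and then apply Lemma~\ref{lem_interpolation}\ref{lem_interpolation-1} to $\psi > \chi$ to obtain strong negations $\tilde{\chi}_0 < \tilde{\chi}_1$ of $\chi$ satisfying $\chi \to \neg\tilde{\chi}_1 \to \neg\tilde{\chi}_0 \to \psi$. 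From $M \not\models \psi(a)$ and the validity of $\neg\tilde{\chi}_0 \to \psi$ I read off $M \models \tilde{\chi}_0(a)$; applying \eqref{eq:ddagger} to the $\FF$-formula $\tilde{\chi}_0$ gives $N \models \{\tilde{\chi}_0(fa)\}'$, whence $N \models \tilde{\chi}_1(fa)$ because $\tilde{\chi}_1 > \tilde{\chi}_0$. Now $\chi \to \neg\tilde{\chi}_1$ forces $N \not\models \chi(fa)$, which contradicts $N \models \{\varphi(fa)\}'$ since $\chi > \varphi$.

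The main obstacle is exactly this backward step: because the preservation hypothesis is asymmetric, the only way to turn the failure of $\psi(a)$ in $M$ into a statement about $N$ is first to convert it into the satisfaction, in $M$, of a bona fide $\FF$-formula, and the interpolation lemma is what makes this conversion available by sandwiching a strong negation of $\chi$ between $\chi$ and its approximation $\psi$. Finally, the $\HH$-version needs no new idea: strong negations and approximations of $\HH$-formulas are again $\HH$-formulas, and Lemma~\ref{lem_interpolation} applies verbatim to them, so both arguments above go through with $\HH$ in place of $\FF$ and approximate $\HH$-elementarity in place of \eqref{eq:ddagger}.
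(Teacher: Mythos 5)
Your proof is correct and follows essentially the same route as the paper's: the forward direction uses density of $>$ plus one application of \eqref{eq:ddagger} to an interpolated approximation, and the backward direction uses Lemma~\ref{lem_interpolation}\ref{lem_interpolation-1} to sandwich a pair of strong negations and then applies \eqref{eq:ddagger} contrapositively. The only difference is that you phrase the backward step as a proof by contradiction where the paper argues directly, which is purely cosmetic.
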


\begin{proof} 
%CORRETTA DA LUCA (Nov 2023)
%{\color{red}For the left-to-right implication,  assume  $M\models\big\{\varphi(a)\big\}'$. Let $\varphi'>\varphi''>\varphi$.
%  By  assumption and by definition of approximate elementary map, $N\models\varphi''(fa).$ Thus $N\models\varphi'(fa)$. Since $\varphi'$ was arbitrarily chosen, the conclusion follows.
% 
%   
% For the converse implication, assume $N\models\big\{\varphi(fa)\big\}'$.
%    Let  $\varphi'>\varphi''>\varphi$.
%  By Lemma~\ref{lem_interpolation} there is some $\tilde{\varphi}\perp\varphi''$ such that $\varphi''\rightarrow\neg\tilde{\varphi}\rightarrow\varphi'$.
%  Then $N\models\neg\tilde{\varphi}(fa)$, from which we get  $M\models\neg\tilde{\varphi}(a)$.
%  Hence $M\models\varphi'(a)$.}
  For the left-to-right implication,  assume  $M\models\big\{\varphi(a)\big\}'$. Given an arbitrary \( \varphi' > \varphi \), use density of \( > \) and pick any $\varphi'>\varphi''>\varphi$. Then \( M \models \varphi''(a) \) by assumption, hence \( N \models \varphi'(fa) \) by~\eqref{eq:ddagger} applied to \( \varphi'' \). Since $\varphi'$ was chosen arbitrarily, the conclusion follows.
  
  For the converse implication, assume $N\models\big\{\varphi(fa)\big\}'$ and
  let  $\varphi'>\varphi''>\varphi$. 
    By Lemma~\ref{lem_interpolation} there are $\tilde{\varphi}_0, \tilde{\varphi}_1 \perp\varphi''$ with \( \tilde{\varphi}_1 > \tilde{\varphi}_0 \) such that $\varphi'' \to \neg\tilde{\varphi}_1$ and \( \neg \tilde{\varphi}_0 \rightarrow\varphi' \).
    Since \( N \models \varphi ''(fa) \) by assumption, we get \( N \models \neg \tilde{\varphi}_1(fa) \), and thus \( M \models \neg \tilde{\varphi}_0(a) \) by~\eqref{eq:ddagger} applied to \( \tilde{\varphi}_0 \). Hence \( M \models \varphi'(a) \), and since \( \varphi' \) was an arbitrary approximation of \( \varphi \) we are done.
 \end{proof}

Finally, we define  the analogues of classical partial embeddings.

\begin{definition} \label{def:partialembedding}
A map $f \colon M\to N$ is a \emph{partial ${\FF}$-embedding} if~\eqref{eq:dagger} above holds for all ${\FF}$-atomic formulas; equivalently,~\eqref{eq:dagger} holds for all $\FF$-formulas $\varphi(x)$  without quantifiers of sort \( \sf H \).
%RISCRITTO DA LUCA (Nov 2023)
%the scope of each  quantifier of sort $\sf H$ occurring in $\varphi$ is some $\HH$-formula.
 %\emph{Partial \( \HH \)-embeddings} are defined similarly.
\end{definition}

\begin{remark} \label{rmk:partialembedding}
  Let $f \colon M\to N$ be a partial ${\FF}$-embedding. Then, for every $a\in({\rm dom }f)^{|x|}$ and every ${\FF}$-formula $\varphi(x)$ without quantifiers of sort ${\sf H}$,
\[
 M\models\varphi(a) \quad \Leftrightarrow \quad N\models\varphi(fa).
 \]
 The forward implication is by definition of a partial \( \FF \)-embedding, so we only need to check the backward implication.
This is trivial for ${\FF}$-atomic formulas \( \varphi \in \LL_{\sf H} \), as they are closed under negations. For an \( \FF \)-atomic formula of the form $t(x)\in C$, assume
 $M\not\models t(a)\in C$. Since  $M\models t(a)\in\{c\}$ with $c=t^M(a) \notin C$, then \( N \models t(fa) \in \{ c \} \), and hence $N\not\models t(fa)\in C$.
The inductive cases are  straightforward. (In order to deal with quantifiers of sort \( \sf S \), recall  that the language $\LL_{\sf S}$ has a name for each element of $X$.)
\end{remark}

%%%%%%%%%%%%%%%%%%%%%%%%%%%%%%%%%%%%
%%%%%%%%%%%%%%%%%%%%%%%%%%%%%%%%%%%%
%%%%%%%%%%%%%%%%%%%%%%%%%%%%%%%%%%%%
%%%%%%%%%%%%%%%%%%%%%%%%%%%%%%%%%%%%
%%%%%%%%%%%%%%%%%%%%%%%%%%%%%%%%%%%%
\section{The standard part of a structure}\label{standard_part}

Our goal is to prove the Compactness Theorem for the class of ${\FF}$-formulas. We  accomplish this in Section~\ref{compactness}. We anticipate that we do not resort to an ultraproduct construction. We rather build on the Compactness Theorem for classical logic and  make use of two basic notions in nonstandard topology,  those of monad of a point and of near-standard point.
In this section we prove the preliminary  Lemma~\ref{lem_st} that  we later use in the proof of Theorem~\ref{thm_compattezza}.

Fix  an $\LL_{\sf S}$-elementary extension ${}^*\! X$ of $ X$.
Here and below, we denote by ${}^*\!D$  the interpretation of the predicate symbol $D$ in $ {}^*\! X$.
%an ${\LL}$-structure $\langle N, {}^*\! X\rangle$ which is an ${\LL}$-elementary  extension of the model  $\langle M,  X\rangle$. 
%
%\begin{mdframed}{\color{red}{ che ruolo gioca M? non basta assumere che ${}^*\! X$ sia una $\LL_{\sf S}$-estensione elementare di $ X$ ? vedi anche thm. compattezza}}\end{mdframed}
%
Let $\eta$ be an $n$-tuple of variables of sort ${\sf S}$, for some \( n \geq 1 \).
For each $\beta\in  X^n$, we define the \( \LL_{\sf S} \)-type
\[
{\rm m}_\beta(\eta) = \{\eta\in D \MID  D \text{ is a closed neighborhood of }\beta\}.
\]

 Then   the  set $X_\beta$ of  realizations of ${\rm m}_\beta(\eta)$ in ${}^*\! X$ is 
\[
X_\beta=\bigcap\{ {}^*\!D \MID D \text{ is a closed neighborhood of }\beta\}.
\]

Thus, if we regard ${}^*\! X$  as a nonstandard extension of the topological space $X$ and recall that the closed neighborhoods of $\beta$ form a basis for its neighborhood system, we have that $X_\beta$ is just what, in nonstandard language, is called the \emph{monad} of $\beta$.

% INCORPORATO NEL REMARK DA LUCA (Nov 2023)
%{\color{red}By using the nonstandard characterization of compactness (see, for instance, \cite{D}) and the Hausdorff assumption on $X$, the next remark is straightforward.}

\begin{remark}\label{fact_uniqueness_st}
  For every $\alpha\in({}^*\! X)^n$ there is a unique $\beta\in  X^n$ such that ${}^*\! X\models{\rm m}_\beta(\alpha)$. This is straightforward if one uses the nonstandard characterization of compactness (see, for instance, \cite{D}) and the Hausdorff assumption on $X$. However,
for the benefit of the reader who is not familiar with this argument, we provide a self-contained model-theoretic argument. 
 The uniqueness of $\beta$ follows from the Hausdorff property of $ X^n$, so we just prove existence. %the existence of such a \(\beta\).
For a contradiction, let  $\alpha\in({}^*\! X)^n$ be such that for each $\beta\in  X^n$ there exists a closed neighborhood $D_\beta$ of $\beta$ with ${}^*\! X \not \models \alpha \in D_\beta$. By compactness of $ X^n$, there is some finite subset $Y$ of $ X^n$ such that $ X^n=\bigcup_{\beta\in Y } D_\beta.$  From $ X\preccurlyeq {}^*\! X$ we get  $({}^*\! X)^n\subseteq\bigcup_{\beta\in Y} {}^*D_\beta$, contradicting the fact that $\alpha \notin {}^*D_\beta$ for all \( \beta \in X^n \) .
\end{remark}

We refer to Remark~\ref{fact_uniqueness_st} by saying that every point in $({}^*\! X)^n$ is near-standard. We call $\beta$ the \emph{standard part} of $\alpha$, and we denote it by ${\rm st}(\alpha)$.
  By definition of ${\rm st}(\alpha)$, if \( \alpha \in ({}^*\!  X)^n \)  and $C \subseteq X^n$ is a closed neighborhood of ${\rm st}(\alpha)$, then ${}^*\! X\models \alpha\in C$. The following proposition provides a converse to this, and it implies that if \( \alpha \in X^n \) then \( {\rm st}(\alpha) = \alpha \). (Take \( C = \{ \alpha \} \) and use the fact that \( X \preccurlyeq {}^*\! X  \).)

\begin{proposition}\label{fact_st1}
% RISCRITTO DA LUCA (Nov 2023}
%{\color{red}  Let $\alpha\in({}^*\!  X)^n$ and let $C$ be a closed neighborhood of  $ X^n$. Then
%  %By definition of ${\rm st}(\alpha)$, if $C$ is a closed neighborhood of ${\rm st}(\alpha)$ then ${}^*\! X\models \alpha\in C$.
%  % Furthermore,
%$${}^*\! X\ {\models}\, \alpha\in C\ \leftrightarrow\ {\rm st}(\alpha)\in C.$$}
Let $\alpha\in({}^*\!  X)^n$ and let 
$C \subseteq X^n$ be a closed set. Then
\[
{}^*\! X \models \alpha\in C\ \to\ {\rm st}(\alpha)\in C.
\]
\end{proposition}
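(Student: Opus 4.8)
The plan is to argue by contradiction, combining the nonstandard description of $ {\rm st}(\alpha) $ with the separation properties of $ X^n $ and the $ \LL_{\sf S} $-elementarity of $ X \preccurlyeq {}^*\! X $. Existence and uniqueness of $ {\rm st}(\alpha) $ are already supplied by Remark~\ref{fact_uniqueness_st}, so I may freely set $ \beta = {\rm st}(\alpha) $ and recall that $ \alpha $ lies in the monad $ X_\beta $, i.e.\ that $ {}^*\! X \models \alpha \in D $ for every closed neighborhood $ D $ of $ \beta $.

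First I would assume $ {}^*\! X \models \alpha \in C $ and suppose, toward a contradiction, that $ \beta \notin C $. Since $ X^n $ is compact Hausdorff, hence normal (in particular regular), and $ \{\beta\} $ and $ C $ are disjoint closed subsets of $ X^n $, I can separate them by a closed neighborhood $ D $ of $ \beta $ with $ D \cap C = \emptyset $: concretely, take disjoint open sets around $ \{\beta\} $ and around $ C $, let $ D $ be the closure of the one containing $ \beta $, and shrink if needed so that $ D $ misses $ C $.

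Now I would exploit the definition of $ \beta = {\rm st}(\alpha) $: as $ D $ is a closed neighborhood of $ \beta $ and $ \alpha \in X_\beta $, we have $ {}^*\! X \models \alpha \in D $. On the other hand, the disjointness $ D \cap C = \emptyset $ in $ X^n $ is expressed by the $ \LL_{\sf S} $-sentence $ \forall \eta\, \neg(\eta \in C \wedge \eta \in D) $, which holds in $ X $; by $ X \preccurlyeq {}^*\! X $ it transfers to $ {}^*\! X $. This contradicts $ {}^*\! X \models \alpha \in C \wedge \alpha \in D $, and so $ \beta \in C $, which is exactly $ {}^*\! X \models \alpha \in C \to {\rm st}(\alpha) \in C $.

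There is no substantial obstacle here: the argument is a short transfer-plus-separation manoeuvre. The only points worth double-checking are that $ C $ and the separating set $ D $ are \emph{genuine} predicate symbols of $ \LL_{\sf S} $ (they are, being closed subsets of $ X^n $), so that the disjointness statement is a bona fide first-order sentence eligible for transfer, and that the separation is carried out in $ X^n $ rather than in $ X $ so as to match the arity of $ \alpha $.
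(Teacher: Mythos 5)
Your proof is correct and takes essentially the same approach as the paper: both separate \( {\rm st}(\alpha) \) from \( C \) by a closed neighborhood \( D \) using normality of \( X^n \), invoke the definition of the standard part to get \( \alpha \in {}^*\!D \), and transfer the disjointness of \( C \) and \( D \) along \( X \preccurlyeq {}^*\!X \). The paper phrases the argument as a contraposition rather than a contradiction, which is a purely cosmetic difference.
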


\begin{proof} 
% CANCELLATO DA LUCA (Nov 2023) IN SEGUITO A RIFORMULAZIONE
%{\color{red}The right-to-left direction  follows from the definition of ${\rm st}(\alpha)$. }
By contraposition. Assume ${\rm st}(\alpha)\notin {}^*\!C$. From  $ X\preccurlyeq {}^*\! X$ we get that ${\rm st}(\alpha)\notin C.$
  By normality of $ X^n$ there is a closed neighborhood $D$ of ${\rm st}(\alpha)$ which is disjoint from $C$.  Hence ${}^*\!C\cap {}^*\!D=\emptyset$ by \( X \preccurlyeq {}^*\! X  \) again. On the other hand, by definition of   ${\rm st}(\alpha)$ we have $\alpha\in {}^*\!D$. Thus $\alpha\notin {}^*\!C$.
 \end{proof}

%  Note also that if $C$ is a closed neighborhood of ${\rm st}(\alpha)$ then ${}^*\! X\models \alpha\in C.$

\begin{remark}\label{fact_tuple_st}
Let $\alpha=\langle\alpha_1,\dots,\alpha_n\rangle\in ({}^*\! X)^n$. Then 
\[
{\rm st}(\alpha)=\langle {\rm st}(\alpha_1),\dots, {\rm st}(\alpha_n)\rangle.
\] 
%CORRETTA DA LUCA (Nov 2023}
%{\color{red} Indeed, by our assumptions on $ X$ the closed neighborhoods of a point  in $ X^n$ form a basis of neighborhoods for that point. Thus, in order to prove the equality above it suffices to show that $\langle {\rm st}(\alpha_1),\dots, {\rm st}(\alpha_n)\rangle$ belongs to ${}^*\!D$ for each basic open neighborhood $D$ of $\alpha$.
%A basic open neighborhood of $\alpha$ has the form $D=D_1\times\dots\times D_n,$ where $D_i$ is an open neighborhood of $\alpha_i$, for all $1\le i\le n$. Since ${}^*\!D=({}^*\!D_1) \times\dots\times ({}^*\!D_n)$ holds by  $ X \preccurlyeq {}^*\! X$, then ${\rm st}(\alpha_i)\in  ^*\!D_i$ for all $1\le i\le n$ and the conclusion follows.}
 Indeed, fix an arbitrary closed neighborhood \( D \) of \( \langle {\rm st}(\alpha_1),\dots, {\rm st}(\alpha_n)\rangle \). Then there are closed neighborhoods \( D_i \) of \( {\rm st}(\alpha_i) \) such that \( D_1 \times \ldots \times D_n \subseteq D \). By definition of \( {\rm st}(\alpha_i) \), we have \( \alpha_i \in {}^*\! D_i \) for all \( 1 \leq i \leq n \). Moreover, \( {}^*\! D_1 \times \ldots \times {}^*\! D_n \subseteq {}^*\! D \) by \( X \preccurlyeq {}^*\! X  \), hence \( \alpha \in {}^*\! D \).
 \end{remark}

 If we regard ${}^*\! X$  as a nonstandard extension of the topological space $ X$, the statement~\eqref{eq:circ} in the next proposition is a straightforward consequence of the nonstandard characterization of continuity in terms of monads (see~\cite{D}). 
 For the reader's convenience, below we provide a self-contained argument.
 %for $(\circ)$. 

\begin{proposition}\label{fact_terms_st}
For all $\alpha\in({}^*\!  X)^{|\alpha|}$ and function symbols $f \in \LL_{\sf S}$ of sort ${\sf S}^{|\alpha|}\to{\sf S}$,
\begin{equation} \tag{\( \circ \)} \label{eq:circ}
{}^*\! X \models {\rm st}\big(f(\alpha)\big)=f\big({\rm st}(\alpha)\big).
\end{equation}
%  \ceq{(\circ)\hfill{}^*\! X}{\models}{{\rm st}\big(f(\alpha)\big)=f\big({\rm st}(\alpha)\big).}
\end{proposition}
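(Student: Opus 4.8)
The plan is to reduce everything to the uniqueness of standard parts from Remark~\ref{fact_uniqueness_st}. Write $\beta = {\rm st}(\alpha)$. Since every point of $({}^*\!X)^{|\alpha|}$ (and of ${}^*\!X$) is near-standard and lies in exactly one monad, it suffices to show that $f(\alpha)$ lies in the monad $X_{f(\beta)}$ of $f(\beta)$, that is, that ${}^*\!X \models f(\alpha) \in {}^*\!C$ for every closed neighborhood $C$ of $f(\beta)$. Once this is established, uniqueness forces ${\rm st}(f(\alpha)) = f(\beta)$, which is exactly~\eqref{eq:circ}.

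So I would fix an arbitrary closed neighborhood $C$ of $f(\beta)$ and choose an open set $V$ with $f(\beta) \in V \subseteq C$. By continuity of $f$, the preimage $f^{-1}(V)$ is an open neighborhood of $\beta$ in $X^{|\alpha|}$. Since $X^{|\alpha|}$ is compact Hausdorff, hence normal (and in particular regular, with points closed), I can extract a closed neighborhood $D$ of $\beta$ with $D \subseteq f^{-1}(V)$, so that $f(D) \subseteq V \subseteq C$. This is the genuinely topological part of the argument.

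The crux is then a transfer argument. Because $f(D) \subseteq C$, the $\LL_{\sf S}$-sentence
\[
\forall \eta \, \big( \eta \in D \to f(\eta) \in C \big),
\]
where $\eta$ is a $|\alpha|$-tuple of variables of sort ${\sf S}$, holds in $X$; note that it \emph{is} an $\LL_{\sf S}$-sentence, since $D$ and $C$ are predicate symbols of $\LL_{\sf S}$ and $f$ is a function symbol of $\LL_{\sf S}$. As $X \preccurlyeq {}^*\!X$, the same sentence holds in ${}^*\!X$. Now $D$ is a closed neighborhood of $\beta = {\rm st}(\alpha)$, so by the very definition of the standard part we have ${}^*\!X \models \alpha \in {}^*\!D$; instantiating the transferred sentence at $\eta = \alpha$ yields ${}^*\!X \models f(\alpha) \in {}^*\!C$, as required.

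The main obstacle, such as it is, is precisely the step from continuity to a closed neighborhood $D$ with $f(D) \subseteq C$: it is exactly here that the topology of $X$ enters, and it is the observation that the resulting implication is expressible in $\LL_{\sf S}$ that converts the elementary statement $X \preccurlyeq {}^*\!X$ into the nonstandard conclusion. Everything else—the tuple bookkeeping for $\eta$ and the appeal to uniqueness of the monad—is routine, and mirrors the self-contained arguments already given in Remark~\ref{fact_uniqueness_st} and Proposition~\ref{fact_st1}.
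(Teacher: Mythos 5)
Your proof is correct and follows essentially the same route as the paper's: reduce to showing that $f(\alpha)$ lies in the monad of $f({\rm st}(\alpha))$, pull a closed neighborhood back through $f$ by continuity, and transfer the resulting universally quantified $\LL_{\sf S}$-sentence via $X \preccurlyeq {}^*\!X$. The only (harmless) difference is that the paper takes the pullback $f^{-1}[D]$ directly as the required closed neighborhood of ${\rm st}(\alpha)$, whereas you insert an extra regularity step to shrink to a closed neighborhood inside $f^{-1}(V)$.
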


\begin{proof}  
%By Remark~\ref{fact_uniqueness_st} and by  definition of the standard part map ${\rm st}$,  
 By definition of standard part,
it suffices to prove that ${}^*\! X\models f(\alpha)\in D$ for every closed neighborhood $D$ of $f\big({\rm st}(\alpha)\big)$.
  Fix one such $D$.
  By continuity %{\color{blue}(of the interpretation in \( X \))} 
 of $f$, the set  $C = f^{-1}[D]$ is a closed neighborhood of ${\rm st}(\alpha)$.
  By definition of \( {\rm st}(\alpha) \),
 % From $ X\preceq{}^*\! X$ we get  ${}^*\! X\models {\rm st}(\alpha)\in f^{-1}[D]$. Then, as pointed out in  Proposition~\ref{fact_st1},  
  ${}^*\! X\models \alpha\in C$.  Finally, $ X\preccurlyeq{}^*\! X$  yields that ${}^*\! X \models \forall \eta \, (\eta \in C \leftrightarrow f(\eta) \in D)$), hence ${}^*\! X\models f(\alpha)\in D$. 
\end{proof}

%If we start from a 
An elementary extension of a standard structure \( \langle M,X \rangle \) %and pass to an elementary extension of it, we will get a 
is a nonstandard \( \LL \)-structure of the form \( \langle N, {}^*\! X \rangle \) for some \( X \preccurlyeq {}^*\! X \). 
We now define a procedure to extract from any \( \LL \)-structure of the form \( \langle N, {}^*\! X \rangle \) a standard structure \( \langle N, X \rangle \) so that truth in \( \langle N, {}^*\! X \rangle \) is closely related to truth in \( \langle N, X \rangle \) (see Lemma~\ref{lem_st}).

%RISCRITTO DA LUCA (Nov 2023)
%{\color{red}Next we define a standard structure $\langle N, X\rangle$ which we call the \textit{standard part} of $\langle N,{}^*\! X\rangle$.
%We denote by ${}^*\!f$  and   $f^N$  the interpretations of a function symbol  $f$ in  $\langle N,{}^*\! X\rangle$ and in $\langle N, X\rangle$ respectively. We adopt the same notational convention for terms.
%
%Regarding the definition of $\langle N, X\rangle$, we stipulate that, for each function symbol $f$ of sort ${\sf H}^n\to {\sf S}$ and each $a\in N^n$,
%$${f^N(a)}\ =\ {{\rm st}\big({}^*\!f(a)\big)}\mbox{\qquad  for all\ } a\in N^n.$$
%
%
%Symbols in $\LL_{\sf H}$ have  the same interpretation  in $\langle N,{}^*\! X\rangle$ and in $\langle N, X\rangle$. Note that since $ X\preccurlyeq {}^*\! X$, if $f$ is an $n$-ary function symbol in $\LL_{\sf S}$ then $f^N=f^ X$.}

\begin{definition} \label{def:standardpartofastructure}
Let \( \langle N, {}^*\! X \rangle \) be an \( \LL \)-structure with \( X \preccurlyeq {}^*\! X \). The \emph{standard part} of \( \langle N, {}^*\!X \rangle \) is the standard structure \( \langle N,X \rangle \) where
\begin{itemizenew}
\item
each symbol in \( \LL_{\sf S} \) is interpreted in \( X \) as usual;
\item
each symbol in \( \LL_{\sf H} \) is interpreted as in \( N \) (so these symbols have the same interpretation in both \( \langle N, {}^*\!X \rangle \) and \( \langle N,X \rangle \));
\item
each function symbol \( f \) of sort \( {\sf H}^n \to {\sf S} \) is canonically interpreted as the function \( f^{\langle N, X \rangle} \colon N^n \to X \) such that for all \( a \in N^n \)
\[ 
f^{\langle N,X \rangle}(a) = {\rm st} \big(f^{\langle N, {}^*\! X \rangle}(a)\big).
 \] 
\end{itemizenew}
\end{definition}
To simplify notation, for each function symbol \( f \in \LL \) we denote by \( {}^*\! f \) its interpretation \( f^{\langle N, {}^*\! X \rangle} \) in \( \langle N, {}^*\! X \rangle \), and by \( f^N \) its interpretation in the standard structure \( \langle N,X \rangle \) (which, as usual, will simply be denoted by \( N \) when there is no danger of confusion).  Similar notation will be adopted for \( \LL \)-terms.

\begin{proposition}\label{fact_st2} 
  With the above assumptions and  notation, let 
  $t(x\,;\eta)$ be an \( \LL \)-term of sort ${\sf H}^{|x|}\times{\sf S}^{|\eta|}\to {\sf S}$.
  Then, for all $a\in N^{|x|}$ and all $\alpha\in({}^*\! X)^{|\eta|}$%,\vskip0.1\baselineskip
%
%
%\ceq{(\$)\hfill t^N\big(a\,;{\rm st}(\alpha)\big)}{=}{{\rm st}\big({}^*t(a\,;\alpha)\big).}
\begin{equation} \tag{\( \$ \)} \label{eq:dollar}
t^N\big(a\,;{\rm st}(\alpha)\big) = {\rm st}\big({}^*t(a\,;\alpha)\big).
\end{equation}
\end{proposition}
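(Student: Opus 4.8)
The plan is to argue by induction on the structure of the \( {\sf S} \)-sorted term \( t \). The key structural observation, which organizes the whole proof, is that the only cross-sort function symbols of \( \LL \) go from \( {\sf H}^n \) to \( {\sf S} \); consequently every \( {\sf H} \)-sorted subterm of \( t \) is built solely from variables among \( x \) and symbols of \( \LL_{\sf H} \). Such subterms receive identical interpretations in \( \langle N,{}^*\! X \rangle \) and in \( \langle N,X \rangle \) (since \( \LL_{\sf H} \) is interpreted the same way in both) and in particular do not depend on \( \eta \). This reduces the problem to a clean classification of the \( {\sf S} \)-sorted terms: variables \( \eta_i \), constants \( c \in X \), applications \( g(s(x)) \) of a symbol \( g \) of sort \( {\sf H}^m \to {\sf S} \) to an \( {\sf H} \)-sorted tuple \( s \), and applications \( f(t_1,\dots,t_n) \) of a continuous symbol \( f \) of sort \( {\sf S}^n \to {\sf S} \).

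First I would dispatch the base cases. If \( t = \eta_i \), then \( t^N(a\,;{\rm st}(\alpha)) \) is the \( i \)-th coordinate of \( {\rm st}(\alpha) \), which by Remark~\ref{fact_tuple_st} equals \( {\rm st}(\alpha_i) = {\rm st}\big({}^*t(a\,;\alpha)\big) \). If \( t = c \) is a constant in \( X \) (the \( n=0 \) case of a continuous function symbol), both sides equal \( c \), using that \( {\rm st} \) fixes standard points. The only genuinely cross-sort base case is \( t(x\,;\eta) = g(s(x)) \): writing \( b = s^N(a) \in N^m \) (the same value in both structures, by the observation above), the required identity is precisely the defining clause for \( g^{\langle N,X \rangle} \) in Definition~\ref{def:standardpartofastructure}, namely \( g^N(b) = {\rm st}\bigl({}^*g(b)\bigr) \).

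For the inductive step, suppose \( t = f(t_1,\dots,t_n) \) with \( f \) continuous of sort \( {\sf S}^n \to {\sf S} \) and each \( t_i \) an \( {\sf S} \)-sorted term satisfying the statement. Setting \( \gamma = \langle {}^*t_1(a\,;\alpha),\dots,{}^*t_n(a\,;\alpha)\rangle \in ({}^*\! X)^n \), I would first apply Proposition~\ref{fact_terms_st} to obtain \( {\rm st}\bigl({}^*f(\gamma)\bigr) = f\bigl({\rm st}(\gamma)\bigr) \), then compute \( {\rm st}(\gamma) \) coordinatewise via Remark~\ref{fact_tuple_st}, and finally rewrite each coordinate \( {\rm st}\big({}^*t_i(a\,;\alpha)\big) \) as \( t_i^N(a\,;{\rm st}(\alpha)) \) by the inductive hypothesis. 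Since \( f \) is an \( \LL_{\sf S} \)-symbol it is interpreted identically in \( X \) and in the standard structure \( N \), so \( f\bigl({\rm st}(\gamma)\bigr) = f^N\bigl(t_1^N(a\,;{\rm st}(\alpha)),\dots,t_n^N(a\,;{\rm st}(\alpha))\bigr) = t^N(a\,;{\rm st}(\alpha)) \), closing the induction.

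The variable and constant cases and the bookkeeping in the inductive step are routine, and the genuine mathematical content is already packaged into Proposition~\ref{fact_terms_st}. The step I expect to require the most care is therefore not a computation but the \emph{setup}: making explicit that \( {\sf H} \)-sorted subterms are inert across the two structures, so that the only places where the passage from \( {}^*\! X \) to \( X \) does something nontrivial are the applications of continuous \( {\sf S} \)-sorted symbols (governed by Proposition~\ref{fact_terms_st}) and the function symbols of sort \( {\sf H}^m \to {\sf S} \) (governed by the very definition of the standard part of a structure).
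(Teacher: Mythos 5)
Your proposal is correct and follows essentially the same route as the paper's proof: trivial base cases for variables and constants, the defining clause of the standard-part structure for symbols of sort \( {\sf H}^n \to {\sf S} \) (using that their arguments are pure \( \LL_{\sf H} \)-terms), and Proposition~\ref{fact_terms_st} together with Remark~\ref{fact_tuple_st} and the inductive hypothesis for symbols of sort \( {\sf S}^n \to {\sf S} \). Your explicit framing of the inertness of \( {\sf H} \)-sorted subterms is a slightly more detailed write-up of the same argument, not a different one.
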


\begin{proof} 
The case when $t$ is a constant symbol or a variable of sort $\sf S$ is trivial.

If $t$ is $f(t_1,\dots, t_n)$ with $f$ of sort  ${\sf H}^n\to {\sf S}$,  then each $t_i$ is an $\LL_{\sf H}$-term and no variable of sort \( {\sf S} \) occurs in it. Thus  ${}^*t_i(a \, ; \alpha) = {}^*t_i(a)=t_i^N(a) = t_i^N(a \, ; {\rm st}(\alpha))$ for all $1\le i\le n$ by definition of \( \langle N,X \rangle \), and thus~\eqref{eq:dollar}  holds by definition of \( f^N \).%$\langle N, X\rangle$.   

  If $t$ is $f(t_1,\dotsc, t_n)$, for some function symbol $f$ of sort ${\sf S}^n\to {\sf S}$,  inductively assume  that 
\[
t_i^N\big(a\,;{\rm st}(\alpha)\big) = {\rm st}\big({}^*t_i(a\,;\alpha)\big)
\]
holds for each term $t_i(x\,;\eta)$. Then the required equality  follows from 
%the induction hypothesis, 
Proposition~\ref{fact_terms_st}, Remark~\ref{fact_tuple_st}, and \( X \preccurlyeq {}^*\! X \).
% CONTO ESPLICITO IN CASO VENGA RICHIESTO:
% {\rm st} ({}^*t(a \, ; \alpha)) = 
% {\rm st}({}^*f({}^*t_1(a\, ; \alpha), \dotsc, {}^*t_1(a\, ; \alpha))) = 
% {}* f ({\rm st} ({}^*t_1(a\, ; \alpha), \dotsc, {}^*t_1(a\, ; \alpha) ) ) = 
% {}^* f ({\rm st}({}^*t_1(a\, ; \alpha)), \dotsc,{\rm st}( {}^*t_1(a\, ; \alpha))) = 
% f ({\rm st}({}^*t_1(a\, ; \alpha)), \dotsc,{\rm st}( {}^*t_1(a\, ; \alpha))) = 
% f (t^N_1(a \, ; {\rm st}(\alpha)), \dotsc, t^N_n(a \, ; {\rm st}(\alpha))) = 
% t^N(a \, ; {\rm st}(\alpha))
% 
% Le uguaglianze sono giustificate in ordine da: definizione di \( {}^* t \), Proposition~\ref{fact_terms_st}, Remark~\ref{fact_tuple_st}, \( X \preccurlyeq {}^*\! X \), ipotesi induttiva, definizione di \( t^N \).
\end{proof}

% Symbols in $\LL_{\sf S}$ have the interpretation they have in models.

\begin{lemma}\label{lem_st}
  With the above assumptions and  notation,
  for every ${\FF}$-formula $\varphi(x\,;\eta)$, for  all $a\in N^{|x|}$ and all $\alpha\in({}^*\! X)^{|\eta|}$ 
 \begin{equation} \tag{\( + \)} \label{eq:plus}
 \langle N,{}^*\! X\rangle\models\varphi(a\,;\alpha) \quad \Rightarrow \quad \langle N, X \rangle \models \varphi\big(a\,;{\rm st}(\alpha)\big).
 \end{equation}
%  \ceq{(+)\hfill\langle N,{}^*\! X\rangle\models\varphi(a\,;\alpha)}
%  {\Rightarrow}
 % {N\models\varphi\big(a\,;{\rm st}(\alpha)\big)}
\end{lemma}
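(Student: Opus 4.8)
The plan is to prove~\eqref{eq:plus} by induction on the structure of the $\FF$-formula $\varphi$. The base cases are the two kinds of $\FF$-atomic formulas, and the inductive steps cover the connectives $\wedge,\vee$ and the four quantifiers.

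For the base cases, first consider an $\FF$-atomic formula $\varphi\in\LL_{\sf H}$. Such a formula contains no variables of sort $\sf S$, and by construction the $\LL_{\sf H}$-symbols are interpreted identically in $\langle N,{}^*\!X\rangle$ and in $\langle N,X\rangle$ (Definition~\ref{def:standardpartofastructure}); hence the two sides of~\eqref{eq:plus} are literally the same assertion and the implication is trivial. Next consider an $\FF$-atomic formula of the form $t(x\,;\eta)\in C$, where $C$ is a closed subset of $X^m$ and $t=\langle t_1,\dots,t_m\rangle$ is a tuple of terms. Assume $\langle N,{}^*\!X\rangle\models t(a\,;\alpha)\in C$, i.e.\ ${}^*t(a\,;\alpha)\in {}^*\!C$. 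By Proposition~\ref{fact_st2}, applied coordinatewise, we have $t^N(a\,;{\rm st}(\alpha))={\rm st}({}^*t(a\,;\alpha))$. Now ${}^*t(a\,;\alpha)$ is a point of $({}^*\!X)^m$ lying in ${}^*\!C$, so by Proposition~\ref{fact_st1} its standard part ${\rm st}({}^*t(a\,;\alpha))$ lies in $C$. Combining these two facts gives $t^N(a\,;{\rm st}(\alpha))\in C$, which is exactly $\langle N,X\rangle\models t(a\,;{\rm st}(\alpha))\in C$.

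The inductive steps for $\wedge$ and $\vee$ are immediate from the induction hypothesis, since conjunction and disjunction commute with the implication~\eqref{eq:plus}. The quantifier cases split according to sort. For a home-sort quantifier, say $\varphi=\existsH y\,\psi(x,y\,;\eta)$: if $\langle N,{}^*\!X\rangle\models\varphi(a\,;\alpha)$, pick a witness $b\in N$ with $\langle N,{}^*\!X\rangle\models\psi(a,b\,;\alpha)$; the induction hypothesis yields $\langle N,X\rangle\models\psi(a,b\,;{\rm st}(\alpha))$, and $b$ witnesses $\langle N,X\rangle\models\existsH y\,\psi(a,y\,;{\rm st}(\alpha))$. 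The universal case $\forallH$ is analogous, ranging over the common home-sort domain $N$. The delicate step is the space-sort quantifier, and this is where I expect the only real subtlety to lie. For $\varphi=\existsS\zeta\,\psi(x\,;\eta,\zeta)$: a witness $\gamma\in{}^*\!X$ exists with $\langle N,{}^*\!X\rangle\models\psi(a\,;\alpha,\gamma)$; here the induction hypothesis does not produce a witness in $X$ directly but rather gives $\langle N,X\rangle\models\psi(a\,;{\rm st}(\alpha),{\rm st}(\gamma))$, and since every element of ${}^*\!X$ is near-standard (Remark~\ref{fact_uniqueness_st}), ${\rm st}(\gamma)\in X$ serves as the required witness, treating the tuple $\langle\alpha,\gamma\rangle$ and Remark~\ref{fact_tuple_st} so that ${\rm st}(\langle\alpha,\gamma\rangle)=\langle{\rm st}(\alpha),{\rm st}(\gamma)\rangle$. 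The universal case $\forallS\zeta\,\psi$ is the genuinely interesting one: assuming $\langle N,{}^*\!X\rangle\models\forallS\zeta\,\psi(a\,;\alpha,\zeta)$, I must verify $\langle N,X\rangle\models\psi(a\,;{\rm st}(\alpha),\delta)$ for an \emph{arbitrary} $\delta\in X$. The key observation is that $\delta\in X\subseteq{}^*\!X$, so the universal hypothesis instantiates at $\zeta=\delta$ to give $\langle N,{}^*\!X\rangle\models\psi(a\,;\alpha,\delta)$; the induction hypothesis then yields $\langle N,X\rangle\models\psi(a\,;{\rm st}(\alpha),{\rm st}(\delta))$, and since ${\rm st}(\delta)=\delta$ for standard $\delta$ (as noted after Remark~\ref{fact_uniqueness_st}), this is precisely what was needed. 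This completes the induction.

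The main obstacle, and the reason the statement is a one-directional implication rather than an equivalence, is that the standard-part operation only preserves truth \emph{upward} through the monad collapse: Proposition~\ref{fact_st1} sends membership in a closed set to membership of the standard part, but the reverse can fail for non-clopen $C$, which is exactly why $\FF$ restricts negations. I would be careful to invoke $\FF$-atomicity precisely so that no negated closed-set predicate ever appears, ensuring the induction never needs the unavailable converse direction.
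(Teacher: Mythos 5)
Your proof is correct and follows essentially the same route as the paper: induction on the complexity of $\varphi$, with the $\FF$-atomic case $t\in C$ handled via Proposition~\ref{fact_st1}, Proposition~\ref{fact_st2} applied coordinatewise, and Remark~\ref{fact_tuple_st}. The paper dismisses the inductive cases as straightforward, whereas you spell them out (correctly, including the instantiation of $\forallS$ at standard points and taking standard parts of $\existsS$-witnesses), but there is no difference in substance.
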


\begin{proof} 
By induction on the complexity of $\FF$-formulas. Suppose first that $\varphi(x\,;\eta)$ is \( \FF \)-atomic.
  If $\varphi(x\,;\eta)$ is an $\LL_{\sf H}$-formula, the validity of $(+)$ is straightforward.
  Otherwise, $\varphi(x\,;\eta)$  has the form $t(x\,;\eta)\in C$
   for some closed set \( C \subseteq X^n \) and \( t(x\,;\eta) \) a sequence of \( n \) terms \( t_i(x\, ; \eta) \) of sort \( {\sf H}^{|x|} \times {\sf S}^{|\eta|} \to {\sf S} \).
 Assuming  that $\langle N,{}^*\! X\rangle\models t(a\,;\alpha)\in C$ gives
 \( \langle N, {}^*\!X \rangle \models  {\rm st}\big({}^*t(a\,;\alpha)\big)\in C \) by Proposition~\ref{fact_st1}, and hence \( {\rm st}\big({}^*t(a\,;\alpha)\big)\in C \) by \( X \preccurlyeq {}^*\! X \).
  Therefore $t^N\big(a\,;{\rm st}(\alpha)\big)\in C$ by Remark~\ref{fact_tuple_st} and Proposition~\ref{fact_st2} applied to each term \( t_i \, \).
%  This proves the lemma for ${\FF}$-atomic formulas.
    The inductive cases are straightforward.
    %, by using Proposition~\ref{fact_st2}. 
\end{proof}

There is a partial converse to Lemma~\ref{lem_st} using approximations. 

\begin{lemma} \label{lem_st2}
With the above assumptions and notations, for every \( \FF \)-formula \( \varphi(x \,; \eta) \), for all \( a \in N^{|x|} \) and all \( \alpha \in {({}^*\! X)}^{|\eta|} \)
\begin{equation} \tag{\( \times \)} \label{eq:times}
\langle N,X \rangle \models \varphi(a \,; {\rm st}(\alpha)) \quad \Rightarrow \quad \langle N, {}^*\! X \rangle \models \big\{ \varphi (a \,; \alpha) \big\}'.
\end{equation}
\end{lemma}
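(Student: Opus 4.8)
The plan is to prove \eqref{eq:times} by induction on the complexity of the $\FF$-formula $\varphi(x\,;\eta)$, mirroring the structure of the proof of Lemma~\ref{lem_st} but going in the opposite direction. The crucial observation is that an approximation $\varphi'$ is built by enlarging each closed set $C$ occurring in an $\FF$-atomic subformula of the form $t\in C$ to a closed neighborhood $C'$, so the target of the implication is a \emph{weaker} statement than $\varphi$ itself; this extra room is exactly what compensates for the fact that the reverse of Proposition~\ref{fact_st1} fails in general.

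First I would treat the $\FF$-atomic cases. If $\varphi\in\LL_{\sf H}$, then $\varphi=\varphi'$ (no atomic subformula of the form $t\in C$ occurs), and the claim is immediate since the $\LL_{\sf H}$-symbols are interpreted identically in $\langle N,X\rangle$ and $\langle N,{}^*\! X\rangle$. The main atomic case is $\varphi$ of the form $t(x\,;\eta)\in C$ with $C\subseteq X^n$ closed. Assuming $\langle N,X\rangle\models t\big(a\,;{\rm st}(\alpha)\big)\in C$, Proposition~\ref{fact_st2} applied componentwise gives $t^N\big(a\,;{\rm st}(\alpha)\big)={\rm st}\big({}^*t(a\,;\alpha)\big)$, so ${\rm st}\big({}^*t(a\,;\alpha)\big)\in C$. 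Now let $\varphi'$ be an arbitrary approximation, so $\varphi'$ is $t\in C'$ for some closed neighborhood $C'$ of $C$. Since ${\rm st}\big({}^*t(a\,;\alpha)\big)\in C$ and $C'$ is a neighborhood of $C$, the set $C'$ is in particular a closed neighborhood of the point ${\rm st}\big({}^*t(a\,;\alpha)\big)$; hence by the very definition of standard part (the defining property recorded just before Proposition~\ref{fact_st1}) we get ${}^*\! X\models {}^*t(a\,;\alpha)\in C'$, that is, $\langle N,{}^*\! X\rangle\models\varphi'(a\,;\alpha)$. As $\varphi'$ ranges over all approximations of the atomic $\varphi$, this yields $\langle N,{}^*\! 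X\rangle\models\{\varphi(a\,;\alpha)\}'$.

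The inductive steps I expect to be routine, but they require care in keeping the approximations synchronized. For conjunction and disjunction one uses that $\{\varphi_1\wedge\varphi_2\}'$ and $\{\varphi_1\vee\varphi_2\}'$ are generated by $\varphi_1'\wedge\varphi_2'$ and $\varphi_1'\vee\varphi_2'$ with $\varphi_i'>\varphi_i$, so the inductive hypotheses combine directly. For the quantifiers of sort $\sf H$ one simply passes a witness in $N$ through, noting that ${\rm st}$ does not touch the home-sort variables; for the quantifiers of sort $\sf S$ the key point is that given $\alpha$ one chooses the matching witness $\beta\in X$ on the standard side and lifts it (or, in the existential case, pushes down the standard part of a ${}^*\! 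X$-witness), again relying on ${\rm st}$ behaving well on tuples by Remark~\ref{fact_tuple_st}.

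The main obstacle is the existential quantifier of sort $\sf S$. Here the hypothesis $\langle N,X\rangle\models\exists\varepsilon\,\psi\big(a\,;{\rm st}(\alpha),\varepsilon\big)$ supplies a witness $\gamma\in X$, and I must produce a witness in ${}^*\! X$ for the approximated formula. The natural move is to take $\gamma$ itself, viewed inside ${}^*\! X$ via $X\preccurlyeq{}^*\! X$, and observe ${\rm st}(\gamma)=\gamma$; then apply the inductive hypothesis to $\psi$ with the tuple $\langle\alpha,\gamma\rangle$, whose standard part is $\langle{\rm st}(\alpha),\gamma\rangle$ by Remark~\ref{fact_tuple_st}. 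This shows each approximation $\psi'$ holds at $\langle a\,;\alpha,\gamma\rangle$ in $\langle N,{}^*\! X\rangle$, whence $\exists\varepsilon\,\psi'$ holds there; since every approximation of $\exists\varepsilon\,\psi$ arises as $\exists\varepsilon\,\psi'$ for some $\psi'>\psi$, we conclude $\langle N,{}^*\! X\rangle\models\{\varphi(a\,;\alpha)\}'$. The universal $\sf S$-quantifier is dual and easier, since there one reasons about an arbitrary ${}^*\! X$-witness $\delta$, replaces it by its standard part ${\rm st}(\delta)\in X$ to invoke the hypothesis, and then feeds $\delta$ back through the inductive hypothesis for $\psi$.
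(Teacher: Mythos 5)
Your proposal is correct and follows essentially the same route as the paper: the atomic case $t\in C$ is handled exactly as in the paper's proof, by using Proposition~\ref{fact_st2} (together with Remark~\ref{fact_tuple_st}) to see that $C'$ is a closed neighborhood of ${\rm st}\big({}^*t(a\,;\alpha)\big)$ and then invoking the defining property of the standard part, while the inductive cases you spell out are the ones the paper dismisses as routine. Your explicit treatment of the sort-$\sf S$ quantifiers (using ${\rm st}(\gamma)=\gamma$ for standard witnesses and ${\rm st}(\delta)$ for nonstandard ones) is a correct elaboration of what the paper leaves implicit.
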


\begin{proof}
The proof is similar to that of Lemma~\ref{lem_st}, the only nontrivial case being that of an \( \FF \)-atomic formula \( \varphi \) of the form \( t(x \,; \eta) \in C \). Let \( \varphi' > \varphi \) be \( t(x \,; \eta) \in C' \), for some closed neighborhood \( C' \) of \( C \). If \( t^N(a \,; {\rm st}(\alpha)) \in C \), then 
by Proposition~\ref{fact_st2} and Remark~\ref{fact_tuple_st} the set  
\( C' \) is a closed neighborhood of \( {\rm st} \big( {}^* t(a \,; \alpha) \big) \). By definition of standard part, this means that \( {}^* t(a \,; \alpha) \in {}^* \! \big(C'\big) \), i.e.\ \( \langle N, {}^*\! X \rangle \models \varphi'(a \,; \alpha) \).
\end{proof}

\begin{corollary} \label{cor_st}
With the above assumptions and notations, for every \( \FF \)-formula \( \varphi(x \,; \eta) \), for all \( a \in N^{|x|} \) and all \( \alpha \in {({}^*\! X)}^{|\eta|} \)
\[ 
\langle N, {}^*\! X \rangle \models \big\{ \varphi (a \,; \alpha) \big\}' \quad \Leftrightarrow \quad \langle N,  X \rangle \models \big\{ \varphi (a \,; {\rm st}(\alpha)) \big\}'.
 \] 
\end{corollary}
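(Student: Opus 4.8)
The plan is to prove the two implications of the stated equivalence separately, using the approximation machinery together with Lemmas~\ref{lem_st} and~\ref{lem_st2}. Throughout, I would freely use the fact (noted just before Proposition~\ref{fact_st1}) that \( {\rm st}(\alpha) = \alpha \) whenever \( \alpha \in X^n \), and the density of the relation \( > \) on \( \FF(M) \).

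For the right-to-left direction, assume \( \langle N, X \rangle \models \{ \varphi(a \,; {\rm st}(\alpha)) \}' \), and fix an arbitrary approximation \( \varphi' > \varphi \). By density of \( > \), pick \( \varphi'' \) with \( \varphi' > \varphi'' > \varphi \). Since \( \varphi'' \) is itself an approximation of \( \varphi \), the assumption gives \( \langle N, X \rangle \models \varphi''(a \,; {\rm st}(\alpha)) \). Now I would apply Lemma~\ref{lem_st2} to the formula \( \varphi'' \): the conclusion \eqref{eq:times} yields \( \langle N, {}^*\! X \rangle \models \{ \varphi''(a \,; \alpha) \}' \). In particular, since \( \varphi' > \varphi'' \), the formula \( \varphi' \) belongs to \( \{ \varphi'' \}' \), so \( \langle N, {}^*\! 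X \rangle \models \varphi'(a \,; \alpha) \). As \( \varphi' \) was an arbitrary approximation of \( \varphi \), this shows \( \langle N, {}^*\! X \rangle \models \{ \varphi(a \,; \alpha) \}' \).

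For the left-to-right direction, assume \( \langle N, {}^*\! X \rangle \models \{ \varphi(a \,; \alpha) \}' \), and fix an arbitrary \( \varphi' > \varphi \). Again by density, choose \( \varphi'' \) with \( \varphi' > \varphi'' > \varphi \). The assumption gives \( \langle N, {}^*\! X \rangle \models \varphi''(a \,; \alpha) \). Here I would invoke Lemma~\ref{lem_st} applied to \( \varphi'' \): the implication \eqref{eq:plus} yields \( \langle N, X \rangle \models \varphi''(a \,; {\rm st}(\alpha)) \). Since \( \varphi' > \varphi'' \), the formula \( \varphi' \) is an approximation of \( \varphi'' \), and because \( \varphi'' \to \varphi' \) (each approximation is implied by the formula it approximates, as observed after Definition~\ref{def:approximation}), we get \( \langle N, X \rangle \models \varphi'(a \,; {\rm st}(\alpha)) \). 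Since \( \varphi' \) was arbitrary, \( \langle N, X \rangle \models \{ \varphi(a \,; {\rm st}(\alpha)) \}' \).

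I do not expect a genuine obstacle here, since both Lemmas~\ref{lem_st} and~\ref{lem_st2} are already available and the corollary is essentially their conjunction packaged through approximations. The one point requiring care is the bookkeeping with the density step: each lemma gives only an implication (not an equivalence), so in each direction I must interpose an intermediate approximation \( \varphi'' \) and then use either monotonicity of approximations or the transitivity of \( > \) to pass from \( \varphi'' \) back to the chosen \( \varphi' \). Getting the direction of the inequalities \( \varphi' > \varphi'' > \varphi \) right, and correctly identifying \( \varphi' \) as a member of \( \{\varphi''\}' \) in the right-to-left case, is the only place where a sign error could creep in.
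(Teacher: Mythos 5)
Your proof is correct and follows essentially the same route as the paper: the forward direction via Lemma~\ref{lem_st} and the backward direction via density of \( > \) together with Lemma~\ref{lem_st2} applied to an interpolated \( \varphi'' \). The only (harmless) redundancy is in the forward direction, where the intermediate \( \varphi'' \) is unnecessary --- you can apply Lemma~\ref{lem_st} directly to \( \varphi' \) itself, since the hypothesis already gives \( \langle N, {}^*\! X \rangle \models \varphi'(a\,;\alpha) \).
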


\begin{proof}
The forward implication follows from Lemma~\ref{lem_st}. For the backward implication, fix any \( \varphi' > \varphi \) and by density of \( > \) pick any \( \varphi'' \) such that \( \varphi' > \varphi'' > \varphi \). Then apply Lemma~\ref{lem_st2} to \( \varphi'' \). \end{proof}

%%%%%%%%%%%%%%%%%%%%%%%%%%%%%%%%%%%%
%%%%%%%%%%%%%%%%%%%%%%%%%%%%%%%%%%%%
%%%%%%%%%%%%%%%%%%%%%%%%%%%%%%%%%%%%
%%%%%%%%%%%%%%%%%%%%%%%%%%%%%%%%%%%%
%%%%%%%%%%%%%%%%%%%%%%%%%%%%%%%%%%%%

\section{Compactness}\label{compactness}

We distinguish between consistency with respect to standard structures and consistency with respect to arbitrary ${\LL}$-structures.
We say that an  {${\LL}$-theory $T$ is \emph{${\LL}$-consistent} if there is some ${\LL}$-structure $\langle M,J\rangle$   which satisfies $T.$
We say that $T$ is \emph{consistent} if $M\models T$ for some standard structure $M$.
For simplicity we consider only the class of formulas \( \FF \), but all definitions and results can easily be adapted to the subclass \( \HH \).

\begin{theorem}[\( \FF \)-Compactness Theorem]%[Compactness Theorem for ${\FF}$-theories]
\label{thm_compattezza}
  Let $T\subseteq{\FF}$ be finitely ${\LL}$-consistent.
  Then $T$ is consistent. 
\end{theorem}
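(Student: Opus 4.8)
The strategy is to leverage classical compactness for the full two-sorted language $\LL$ together with the standard-part construction developed in Section~\ref{standard_part}. The obstacle to a direct application of classical compactness is that a model of $T$ produced by the classical theorem will be some arbitrary $\LL$-structure $\langle N, J\rangle$, in which the space sort $J$ need not be our fixed space $X$ at all. The whole point of the standard-part machinery is to repair exactly this defect, by collapsing a nonstandard space sort back onto $X$ while preserving the truth of $\FF$-formulas.

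**First steps.** I would begin by producing \emph{some} $\LL$-model. Since $T\subseteq\FF$ is finitely $\LL$-consistent, classical compactness for (two-sorted) first-order logic gives an $\LL$-structure $\langle N, J\rangle\models T$. The next move is to arrange that the space sort is a genuine elementary extension of $X$. I would consider the $\LL_{\sf S}$-reduct $J$; since $X$ is an $\LL_{\sf S}$-structure interpreting symbols for \emph{all} closed sets and \emph{all} continuous functions, and $J\models T$ with $T$ possibly referring to these, the cleanest route is to take a sufficiently saturated common elementary extension, or to re-run the classical compactness argument over the theory $T$ together with the full elementary diagram of $X$ in the language $\LL_{\sf S}$. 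This yields a model $\langle N, {}^*\!X\rangle\models T$ with $X\preccurlyeq{}^*\!X$ in the sense of $\LL_{\sf S}$, which is precisely the hypothesis under which Section~\ref{standard_part} operates. (Here I am using that every element of ${}^*\!X$ is near-standard, Remark~\ref{fact_uniqueness_st}, which rests on compactness and the Hausdorff property of $X$.)

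**The key step.** Now I would pass to the standard part $\langle N, X\rangle$ of $\langle N, {}^*\!X\rangle$, as in Definition~\ref{def:standardpartofastructure}; this \emph{is} a standard structure in the sense of Definition~\ref{def_0}, since its space sort is the fixed $X$. It remains to check $\langle N, X\rangle\models T$. Each sentence $\varphi\in T$ is an $\FF$-formula, and $\langle N, {}^*\!X\rangle\models\varphi$. Applying Lemma~\ref{lem_st} (the implication~\eqref{eq:plus}), truth of $\FF$-formulas transfers from the nonstandard structure to its standard part: from $\langle N, {}^*\!X\rangle\models\varphi$ we obtain $\langle N, X\rangle\models\varphi$. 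Since $\varphi\in T$ was arbitrary, $\langle N, X\rangle\models T$, and $\langle N, X\rangle$ is standard, so $T$ is consistent.

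**The main obstacle.** I expect the delicate point to be the transition in the first steps: ensuring that the classically-produced model can be taken to have space sort an honest $\LL_{\sf S}$-elementary extension of $X$, rather than an unrelated structure of the second sort. This is where one must be careful that the language $\LL$ and the theory $T$ do not force the space sort to diverge from $X$ in a way that breaks near-standardness. Everything downstream — Lemma~\ref{lem_st} and the standard-part construction — is already in place and does the real work essentially for free; the argument is a short orchestration of these ingredients rather than a computation. I would therefore spend most of the write-up justifying the reduction to the case $X\preccurlyeq{}^*\!X$ and then simply invoke Lemma~\ref{lem_st}.
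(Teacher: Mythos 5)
Your proof is correct and follows the paper's own argument exactly: classical compactness produces an $\LL$-structure $\langle N,{}^*\!X\rangle\models T$ with $X\preccurlyeq{}^*\!X$, and Lemma~\ref{lem_st} transfers the truth of each $\FF$-sentence of $T$ to the standard part $\langle N,X\rangle$, which is a standard structure. The point you flag as delicate --- arranging $X\preccurlyeq{}^*\!X$ by adjoining the $\LL_{\sf S}$-elementary diagram of $X$ before invoking classical compactness --- is passed over silently in the paper's one-line proof, so your extra care there is welcome but does not change the route.
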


\begin{proof}
  Suppose that $T$ is finitely consistent.   By the  Compactness Theorem for classical logic, there is some ${\LL}$-structure $\langle M,{}^*\! X\rangle$, with $ X\preccurlyeq {}^*\! X$, such that $\langle M,{}^*\! X\rangle\models T$.
  Let $\langle M, X\rangle$ be its standard part as in Definition~\ref{def:standardpartofastructure}.
  Then $\langle M, X\rangle\models T$ by Lemma~\ref{lem_st}.
\end{proof}

It is an open problem\footnote{An answer has been recently provided by~\cite{Z}.} whether it is possible to extend Theorem~\ref{thm_compattezza} to a larger class of formulas, in particular when  we allow  in ${\LL}$ function symbols of sort ${\sf H}^n\times {\sf S}^m\to {\sf S}$, with both $n$ and $m$ positive. Functions of this sort have a natural interpretation, for instance in the case of a group acting on a compact Hausdorff space.

\begin{definition}
A  standard structure $N$ is \emph{$\lambda$-${\FF}$-saturated} if it realizes all types 
 $p(x\,;\eta)\subseteq{\FF}(N)$ 
%{\color{cyan}\( p(x)\subseteq{\FF}(N) \)}
% \todo{Non ci devono essere variabili di sorta \( \sf S \), altrimenti una struttura standard non pu\`o mai essere satura}
that are finitely realized in $N$ and that have fewer than $\lambda$ parameters from $N$.
When $\lambda=\vert N \vert $ we simply say that $N$ is  \emph{${\FF}$-saturated.}
\end{definition}

Under suitable set-theoretic assumptions, the existence of ${\FF}$-saturated standard structures can be proved as in the classical case.
%by first applying the classical saturation theorem and then Lemma~\ref{lem_st}. In particular we have the following.
%{\color{cyan}using a minor variation of the classical construction. It implicitly uses the analogue of Tarski's Elementary Chain Theorem, that we omit since it does not significantly differ from the classical one.}

\begin{theorem}\label{thm:monster}
  Assume \(\lambda\) is such that \( \lambda^{<\lambda} =  \lambda > |\LL| \).
Then every standard structure \( M \) of cardinality $\lambda$ has an ${\FF}$-elementary extension to an \( \FF \)-saturated standard structure of cardinality $\lambda$.
\end{theorem}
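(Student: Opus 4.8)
The plan is to mimic the classical construction of saturated models via a union of an elementary chain, where at each step we realize all finitely-realizable types with few parameters. The key point is that every ingredient of the classical argument has an $\FF$-analogue already available in the excerpt, so the main work is organizing the bookkeeping and checking that the final union is again a \emph{standard} structure (i.e.\ that the second sort stays equal to the fixed $X$).

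First I would establish a one-step lemma: every standard structure $M$ of cardinality at most $\lambda$ has an $\FF$-elementary extension $M^+$, again standard and of cardinality $\lambda$, that realizes every type $p(x\,;\eta)\subseteq\FF(M)$ with fewer than $\lambda$ parameters that is finitely realized in $M$. To build $M^+$, I would pass through the nonstandard machinery of Section~\ref{standard_part}. Concretely, take a classical $\lambda$-saturated $\LL$-elementary extension $\langle N,{}^*\!X\rangle$ of $\langle M,X\rangle$ (which exists by the classical saturation theorem under the hypothesis $\lambda^{<\lambda}=\lambda>|\LL|$), and let $\langle N,X\rangle$ be its standard part as in Definition~\ref{def:standardpartofastructure}. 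By Lemma~\ref{lem_st} the inclusion $M\to N$ is $\FF$-elementary, and $\langle N,X\rangle$ is a standard structure by construction. The crucial step is to verify that $\langle N,X\rangle$ realizes the required types: given $p(x\,;\eta)$ finitely realized in $M$, each finite subset is satisfiable, so by $\FF$-Compactness (Theorem~\ref{thm_compattezza}) together with classical $\lambda$-saturation of $\langle N,{}^*\!X\rangle$, the set of approximations $p'$ is realized in $\langle N,{}^*\!X\rangle$ by some tuple $\langle a,\alpha\rangle$; then Corollary~\ref{cor_st} (or Lemma~\ref{lem_st}) transfers this to a realization of $p$ itself in $\langle N,X\rangle$ at $\langle a,{\rm st}(\alpha)\rangle$. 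Care is needed to realize \emph{all} such types simultaneously rather than one at a time, but this is handled exactly as in the classical proof by invoking saturation of the classical extension at the cardinal $\lambda$.

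Next I would iterate. Build an $\FF$-elementary chain $M=M_0\preccurlyeq^{\FF}M_1\preccurlyeq^{\FF}\cdots\preccurlyeq^{\FF}M_i\preccurlyeq^{\FF}\cdots$ of length $\lambda$, where each $M_{i+1}$ is obtained from $M_i$ by the one-step lemma and limit stages are unions. I would need a union lemma, namely that an $\FF$-elementary chain of standard structures has an $\FF$-elementary union which is again standard: the home-sort domain is the union of the $M_i$, the space sort remains the fixed $X$, and $\FF$-elementarity of each inclusion into the union follows by the usual induction on $\FF$-formulas (the quantifier cases use that witnesses already appear at some stage of the chain, and the $\FF$-atomic cases are immediate since they are preserved along $\FF$-elementary maps). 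Taking $M_\lambda=\bigcup_{i<\lambda}M_i$ and using $\lambda^{<\lambda}=\lambda$ to control cardinalities (each $M_i$ stays of size $\lambda$, and there are $\lambda$ steps), a standard closing-off argument shows that any type over $M_\lambda$ with fewer than $\lambda$ parameters already has all its parameters in some $M_i$ with $i<\lambda$, and if it is finitely realized in $M_\lambda$ it is finitely realized in $M_i$, hence realized in $M_{i+1}\preccurlyeq^{\FF}M_\lambda$. Thus $M_\lambda$ is $\FF$-saturated of cardinality $\lambda$.

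The main obstacle I anticipate is the one-step lemma, specifically ensuring that the standard-part construction realizes types without quantifiers ranging over nonstandard points of ${}^*\!X$: one must check that a type $p\subseteq\FF(M)$ involving space-sort variables $\eta$ is genuinely realized over the fixed $X$, not merely over ${}^*\!X$. This is precisely where the interplay between approximate satisfiability, the density of $>$, and the standard-part results of Section~\ref{standard_part} does the heavy lifting; the fact that $\FF$ allows quantification over $X$ means the transfer via Corollary~\ref{cor_st} must be applied carefully to formulas already containing such quantifiers, using that ${\rm st}$ maps $({}^*\!X)^n$ onto $X^n$ so that quantified witnesses in $X$ can always be lifted and projected back. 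The chain and union steps, by contrast, are routine adaptations of the classical theory once the correct $\FF$-elementarity bookkeeping is set up.
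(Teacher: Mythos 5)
Your overall architecture (a one-step extension lemma followed by an $\FF$-elementary chain of length $\lambda$ with a closing-off argument) is the same as the paper's, and your route to the one-step lemma via a classically $\lambda$-saturated extension $\langle N,{}^*\!X\rangle$ and its standard part is a legitimate way to prove it (realize $p$ itself in $\langle N,{}^*\!X\rangle$ by classical saturation, then push down by Lemma~\ref{lem_st}; note that Corollary~\ref{cor_st} alone would only hand you a realization of $p'$, not of $p$). However, there is a genuine gap in the final step.

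The gap is your claim that if a type $p(x)\subseteq\FF(A)$ with $A\subseteq M_i$ is finitely realized in $M_\lambda$, then it is finitely realized in $M_i$. In the classical setting this downward transfer holds because $M_i\preccurlyeq M_\lambda$ and $\exists x\,(\varphi_1\wedge\dots\wedge\varphi_n)$ can be reflected back using negation. Here $\FF$ is not closed under negation, so $M_i\preccurlyeq^{\FF}M_\lambda$ only gives the upward implication $M_i\models\exists x\,\bigwedge_j\varphi_j \Rightarrow M_\lambda\models\exists x\,\bigwedge_j\varphi_j$; the converse can fail. The most you can extract (via Proposition~\ref{fact_HImorphisms}) is that the approximations $\{\exists x\,\bigwedge_j\varphi_j\}'$ hold in $M_i$, i.e.\ $p$ is only \emph{approximately} finitely realized in $M_i$, and without $\omega$-$\FF$-saturation of $M_i$ you cannot invoke Proposition~\ref{prop_approx} to remove the approximations. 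Consequently your one-step lemma, which only realizes types finitely realized \emph{in $M_i$ itself}, is too weak to close the chain. The paper's fix is to strengthen the one-step lemma: at stage $i+1$ one realizes every small type over $M_i$ that is finitely consistent in \emph{some} $\FF$-elementary extension of $M_i$ (a strictly weaker hypothesis than finite consistency in $M_i$, as the paper explicitly notes). Since $M_\lambda$ is such an extension, types finitely realized in $M_\lambda$ are then covered. Your nonstandard construction of $M^+$ can be adapted to this stronger statement, but the statement itself must be changed; as written, the argument does not establish $\FF$-saturation of the union.
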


\begin{proof} 
  We need the following easy consequence of the $\FF$-compactness theorem.

\begin{itemize}
 \item[($\star$)]    For $\lambda$ and $M$ as in the assumptions, there is a model $M'$ of cardinality $\lambda$ that, for every $A \subseteq M$ with $|A| < \lambda$, realises every type $p(x,\eta)\subseteq\FF(A)$ that is finitely consistent in some $M''\succeq^\FF M$.
\end{itemize}

  Note that finite consistency of $p(x,\eta)$ \textit{in some $\FF$-elementary extension\/} of $M$ is a weaker requirement than finite consistency in $M$. The proof of ($\star$) is entirely similar to the standard proof of the classical version.
  
 The construction of the desired \( \FF \)-saturated standard structure is the usual one: we build an $\FF$-elementary chain $\langle M_i : i<\lambda\rangle$ of models of cardinality $\lambda$, where
 \begin{itemize}
 \item $M_0=M$
 \item we take unions at limit stages
 \item at stage $i+1$ we use $(\star)$ to get $M_{i+1}$.
 \end{itemize}
  Let $N$ be the union of the chain. Then $|N|=\lambda$ and, by an analogue of the classical elementary chain lemma, $N \succeq^\FF M_i$ for all $i$.
  Let $p(x,\eta)\subseteq \FF(A)$ be a type that is finitely consistent in $N$, where $|A|<\lambda$. Since $\lambda$ is regular, there is $i<\lambda$ such that $A\subseteq M_i$. By construction, $M_{i+1}$ realizes $p(x)$, and therefore, by $\FF$-elementarity, so does $N$.
\end{proof}

% Homogeneity follows from saturation by the usual back-and-forth construction.

% \begin{proposition}(Homogeneity)
%   Let $N$ be saturated and of cardinality larger than $|{\FF}|$.
%   Then for every $ab\in M^\alpha$, where $\alpha<|N|$, such that $a\equiv b$, there is an ${\FF}$-automorphism of $N$ that maps $a$ to $b$.
% \end{proposition}

 With reference to the notion of approximate satisfiability that we introduced at the end of Section~\ref{ultrapws},  we show that $\omega$-${\FF}$-saturation implies that approximate satisfiability is equivalent to satisfiability. % in every {\color{red}standard structure}.

\begin{proposition}\label{prop_approx}
  Let $N$ be an $\omega$-${\FF}$-saturated standard structure. Then for every formula $\varphi(x; \eta)\in{\FF}(N)$, for all $a\in N^{\vert x\vert}$ and $\alpha\in  X^{\vert \eta\vert}$
\[
 N\models\big\{\varphi(a\,;\alpha)\big\}' \quad \Leftrightarrow \quad N\models\varphi(a\,;\alpha).
 \] 
 
Therefore, in \( N \) each \( \FF \)-type \( p(x\,; \eta) \) is equivalent to its approximation $p'(x\,; \eta)$.
\end{proposition}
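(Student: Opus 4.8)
The proposition asserts a biconditional, and the forward implication $N\models\varphi(a\,;\alpha)\Rightarrow N\models\{\varphi(a\,;\alpha)\}'$ is immediate, since $\varphi\to\varphi'$ for every $\varphi'>\varphi$ (the remark following Definition~\ref{def:approximation}). I would prove the nontrivial implication $N\models\{\varphi(a\,;\alpha)\}'\Rightarrow N\models\varphi(a\,;\alpha)$ by induction on the complexity of $\varphi$, carrying the full biconditional as inductive hypothesis so that its contrapositive is available in the disjunction step.

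For the base cases, if $\varphi\in\LL_{\sf H}$ then $\{\varphi\}'=\{\varphi\}$ and there is nothing to prove. If $\varphi$ is $t(x\,;\eta)\in C$, the key point is that $\alpha\in X$, so $c:=t^N(a\,;\alpha)$ is a genuine point of $X$; the hypothesis says $c\in C'$ for every closed neighborhood $C'$ of $C$, and by normality of $X$ the intersection of all closed neighborhoods of the closed set $C$ equals $C$ itself, whence $c\in C$. The propositional and universal cases are routine applications of the inductive hypothesis: for $\wedge$ one factors an arbitrary approximation $\psi_1'\wedge\psi_2'$ and applies the hypothesis to each conjunct; for $\vee$ one argues contrapositively, combining a failing approximation of each disjunct into a failing approximation of $\varphi$; and for $\forallH$ and $\forallS$ one fixes an arbitrary witness $b\in N$ (respectively $\delta\in X$) and applies the hypothesis to that instance.

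The only place where saturation enters is the existential step, which I expect to be the main obstacle. Say $\varphi=\existsH y\,\psi(y,x\,;\eta)$ (the case $\existsS$ is identical, with the witness living in $X$). The difficulty is that the hypothesis provides, for each $\psi'>\psi$, only a witness $b_{\psi'}$ with $N\models\psi'(b_{\psi'},a\,;\alpha)$, and these may vary with $\psi'$. To extract a single witness for $\psi$ itself I would consider the type $q(y)=\{\psi'(y,a\,;\alpha)\MID\psi'>\psi\}\subseteq\FF(N)$, which has only the finitely many parameters $a$ from $N$. I would first observe that the approximations of a fixed formula are downward directed under $\to$: given $\psi_1',\dots,\psi_k'>\psi$, replacing each $\FF$-atomic occurrence $t\in C$ of $\psi$ by $t\in\bigcap_i C_i'$, where $C_i'$ is the closed neighborhood used in $\psi_i'$, yields a common refinement $\psi_*>\psi$ with $\psi_*\to\psi_i'$ for all $i$, since a finite intersection of closed neighborhoods of $C$ is again one and positivity of $\FF$-formulas propagates the atomic implications. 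As $N\models\existsH y\,\psi_*(y,a\,;\alpha)$ by hypothesis, any witness for $\psi_*$ witnesses all the $\psi_i'$ at once, so $q(y)$ is finitely satisfiable in $N$. By $\omega$-$\FF$-saturation $q(y)$ is realized by some $b\in N$, giving $N\models\{\psi(b,a\,;\alpha)\}'$, and the inductive hypothesis yields $N\models\psi(b,a\,;\alpha)$, i.e.\ $N\models\varphi(a\,;\alpha)$.

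Finally, the concluding assertion about types is a direct consequence: a tuple $(a\,;\alpha)$ with $a\in N^{|x|}$ and $\alpha\in X^{|\eta|}$ realizes $p$ iff $N\models\varphi(a\,;\alpha)$ for all $\varphi\in p$, iff $N\models\{\varphi(a\,;\alpha)\}'$ for all such $\varphi$ by the biconditional just proved, iff $(a\,;\alpha)$ realizes $p'$. The main subtleties to monitor throughout are which variables range over $N$ and which over the fixed space $X$, and the verification that the type $q$ invoked in the existential step genuinely has fewer than $\omega$ parameters from $N$, so that $\omega$-$\FF$-saturation applies.
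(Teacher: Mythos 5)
Your proposal is correct and follows essentially the same route as the paper: the atomic case via the fact that a closed set is the intersection of its closed neighborhoods (the paper phrases this as regularity of $X^n$, in contrapositive form), and the existential step via the type of all approximations of the matrix, whose finite satisfiability is secured by the common-refinement trick (intersecting the closed neighborhoods) and whose realization comes from $\omega$-$\FF$-saturation, exactly as in the paper. The only cosmetic difference is that you make explicit the routine connective and universal-quantifier cases that the paper dismisses as straightforward.
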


\begin{proof}
The right-to-left implication is trivial because \( \varphi \to \varphi' \) for every \( \varphi' > \varphi \). 

The converse implication trivially holds for \( \FF \)-atomic formulas in \( \LL_{\sf H} \), as in this case \( \big \{ \varphi(a \,;\alpha) \big \} ' = \{ \varphi(a\,; \alpha) \} \). Suppose now that \( \varphi \) is an \( \FF \)-atomic formula of the form $t\in C$, where $C$ is a closed subset of $ X^n$. Since $ X^n$ is compact Hausdorff, and hence regular,
if $t^N(a\,;\alpha)\notin C$ then there is some closed neighborhood $C'$ of $C$ such that $t^N(a\,;\alpha)\notin C'$, which means that \( N \not\models \varphi'(a\,; \alpha) \) where \( \varphi' > \varphi \) is \( t \in C' \).

 The inductive steps  for conjunction, disjunction,
 % Usare contrappositivo per il caso di \( \vee \)
  and  for the universal quantifiers 
   of  sort $\sf H$ and \( \sf S \) are straightforward. 

%NASCOSTO DA LUCA (Nov 2023)  
%  For the universal quantifier of sort $\sf S$,  assume inductively that, for each $\beta\in  X$, $$N\models\{\varphi(a;\alpha,\beta)\}' \Rightarrow N\models\varphi(a;\alpha,\beta).$$ 
%  It follows easily that $N\models\{\forall \eta \, \varphi(a;\alpha,\eta)\}' \Rightarrow N\models\forall \eta \, \varphi(a;\alpha,\eta)$.
%  
   % 
  % Consider disjunction.
  % Let $\varphi=\varphi_i\vee\varphi_1$.
  % Assume inductively
 % 
  % \ceq{\hfill\varphi_i(x)}
  % {\leftrightarrow}
  % {\bigwedge_{\varphi_i'>\varphi_i}\varphi_i'(x)}
%
  % Then
%
  % \ceq{\hfill\varphi(x)}
  % {\leftrightarrow}
  % {\bigwedge_{\varphi_1'>\varphi_1}\varphi_1'(x) \ \vee \bigwedge_{\varphi_2'>\varphi_2}\varphi_2'(x)}
%
  % \ceq{}
  % {\leftrightarrow}
  % {\bigwedge_{\substack{\varphi_1'>\varphi_1\\ \varphi_2'>\varphi_2}}\varphi_1'(x) \vee\varphi_2'(x)}
%
  % \ceq{}
  % {\leftrightarrow}
  % {\bigwedge_{\varphi'>\varphi}\varphi'(x)}
%

We now treat the case  of the existential quantifier of sort ${\sf H}$. Consider the formula $\exists y \, \varphi(x, y \, ;\eta)$, and fix $a\in N^{\vert x\vert}$ and $\alpha\in  X^{\vert \eta\vert}$. By inductive hypothesis, for all $b\in N$
\begin{equation} \tag{\( \sim \)} \label{eq:sim}
N\models\big\{\varphi(a,b;\alpha)\big\}' \quad \Rightarrow  \quad N\models\varphi(a,b;\alpha).
\end{equation}
%  \ceq{(\sim)\hfill N\models\big\{\varphi(a,b;\alpha)\big\}'}
%  {\Rightarrow}
%  {N\models\varphi(a,b;\alpha).}
Assume that $N\models\{\exists y \, \varphi(a,y \,;\alpha)\}'$: we prove that the \( \FF \)-type $p(z)=\{\varphi'(a,z \, ;\alpha) \MID \varphi'>\varphi\}$  is realized in $N$ by some \( b \in N \), so that \( N \models \varphi(a,b\,; \alpha) \) by~\eqref{eq:sim}, and thus \( N \models \exists y \, \varphi(a,y \,; \alpha) \).
  % (here recall that $\LL_{\sf S}$ contains a name for each element of $ X$).
The \( \FF \)-type \( p \) uses only finitely many parameters from \( N \) because so does \( \varphi \), hence by \( \omega \)-\( \FF \)-saturation of \( N \) it is enough to show that \( p \) is finitely realizable in \( N \). 
For notational simplicity, we show that if $\varphi'_1,\varphi'_2>\varphi$,  then 
$(\varphi_1\land\varphi_2)(a,z \,; \alpha)$ is realized in $N$. 
First we claim that there is some $\varphi'>\varphi$ such that $\varphi'\rightarrow\varphi'_1\wedge\varphi'_2$. In order to obtain one such $\varphi'$, it suffices to replace each \( \FF \)-atomic subformula of $\varphi$ of the form $t\in C$ with $t\in C_1\cap C_2, $, where $t\in C_1$ and $t\in C_2$ are the \( \FF \)-atomic subformulas that occur in $\varphi'_1$ and $\varphi'_2$  in place of $t\in C$.
By assumption, $N\models\exists y \, \varphi'(a,y \,;\alpha)$, hence $N\models (\varphi_1\wedge\varphi_2)(a,b\, ;\alpha)$, for some $b\in N$, as desired.
%which gives the  realizability of the type $p$. Finally,  $(\sim)$ gives  $N\models \exists y \, \varphi(a,y;\alpha)$. 

 % Therefore it suffices to prove

 % \ceq{\hfill\{\existsH z\,\varphi(x,z)\}'}
 % {\rightarrow}
 % {\existsH z\,\{\varphi(x,z)\}'}

%Replace $x$ with some parameters, say $a$ and assume the antecedent, that is, the theory $\{\existsH z\,\varphi'(a,z):\varphi'>\varphi\}$ is true in $N$.

The case of an existential quantifier of sort ${\sf S}$ is  similar. 
\end{proof}

% \begin{remark}
%   If we restrict to formulas without quantifiers of sort ${\sf H}$, the proposition above holds in every model (this is clear from the proof).
% \end{remark}

\def\ceq#1#2#3{\parbox[t]{20ex}{$\displaystyle #1$}\parbox{5ex}{\hfil $#2$}{$\displaystyle #3$}}

%Therefore, in an $\omega$-${\FF}$-saturated standard structure,  the type $p'(x)$ is equivalent to $p(x)$.

A consequence of Proposition~\ref{prop_approx} is that approximate \( \FF \)-elementary maps (see Definition~\ref{def:approximateelementary})  between $\omega$-${\FF}$-saturated standard structures coincide with their unapproximated version.
In particular, Proposition~\ref{fact_HImorphisms} gives the following corollary.

\begin{corollary}\label{corol_omega_sat}
  Let $M$ be an $\omega$-${\FF}$-saturated standard structure and $f \colon M\to N$ be an ${\FF}$-elementary map.
  Then
\[
M\models\varphi(a) \quad \Leftrightarrow \quad N\models\varphi(fa)
\]
for every ${\FF}$-formula $\varphi(x)$ and every $a\in({\rm dom }f)^{|x|}$.
If moreover \( N \) is \( \omega \)-\( \FF \)-saturated as well, then the same is true for every approximate \( \FF \)-elementary map \( f \colon M \to N \).
%, and similar statements hold when the class \( \FF \) is replaced by \( \HH \) everywhere.}
\end{corollary}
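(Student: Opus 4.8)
The plan is to reduce both assertions to Propositions~\ref{fact_HImorphisms} and~\ref{prop_approx}; essentially all the content lives there, and the task here is only to combine them while keeping careful track of which saturation hypothesis is used on which side.

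For the first assertion I would treat the two directions separately. The forward implication $M\models\varphi(a)\Rightarrow N\models\varphi(fa)$ is immediate from the definition of $\FF$-elementarity, so no work is needed. For the converse I would start from $N\models\varphi(fa)$, note that $\varphi\to\varphi'$ for every $\varphi'>\varphi$ so that $N\models\{\varphi(fa)\}'$, and then invoke Proposition~\ref{fact_HImorphisms} (legitimate since an $\FF$-elementary map is in particular approximate $\FF$-elementary) to obtain $M\models\{\varphi(a)\}'$. At this point I would use the $\omega$-$\FF$-saturation of $M$ through Proposition~\ref{prop_approx} to pass from approximate satisfiability back to $M\models\varphi(a)$, as required. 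It is worth emphasising that only the saturation of $M$ is used here; the saturation of $N$ is unnecessary precisely because $\FF$-elementarity already supplies the forward direction for free.

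For the second assertion $f$ is only approximate $\FF$-elementary, so the forward implication can no longer be read off from the definition and I would instead run a single symmetric chain of equivalences valid in both directions. Applying Proposition~\ref{prop_approx} on the saturated structure $M$, then Proposition~\ref{fact_HImorphisms} to $f$, and finally Proposition~\ref{prop_approx} on the now saturated structure $N$, I obtain
\[
M\models\varphi(a)\;\Leftrightarrow\;M\models\{\varphi(a)\}'\;\Leftrightarrow\;N\models\{\varphi(fa)\}'\;\Leftrightarrow\;N\models\varphi(fa),
\]
which is exactly the claim.

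I do not expect a genuine obstacle: the argument is a formal composition of the two preceding propositions, and the only point requiring care is the bookkeeping of the saturation hypotheses. Specifically, the first assertion needs the saturation of $M$ alone, since the forward direction comes for free from $\FF$-elementarity, whereas the second assertion is genuinely symmetric and therefore consumes the saturation of both $M$ and $N$.
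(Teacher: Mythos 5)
Your proposal is correct and is essentially the paper's intended argument: the paper gives no explicit proof but derives the corollary by combining Proposition~\ref{fact_HImorphisms} with Proposition~\ref{prop_approx}, exactly as you do, and your bookkeeping of which structure's $\omega$-$\FF$-saturation is used in each half matches the statement precisely.
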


\section{The monster model}\label{monster}

Throughout this section we fix  a sufficiently large ${\FF}$-saturated standard structure $\langle  \EuScript{U},  X\rangle$, which exists by Theorem~\ref{thm:monster}, and as usual  identify it with its first component \( \EuScript U \).  (Recall that the structure $ X$ is fixed).  We call $\EuScript{U}$ the \emph{monster model}. The  truth value of a formula is evaluated in  $\EuScript{U}$, unless otherwise specified. An ${\FF}$-elementary substructure of $\EuScript{U}$ is called a \emph{model}. From now on, we  assume that all standard structures under consideration are models.

\begin{remark}\label{remark_identity} 
%  ELIMINATO DA LUCA (Nov 2023), SICCOME L'INCLUSIONE E' ELEMENTARE PER DEFINIZIONE MI SEMBRA OVVIO
%The inclusion map from any model $M$ into $\EuScript{U}$ is an approximate $\FF$-elementary map. More generally, 
For all models $M, N$ and all $a\in M^{\vert a\vert}\cap N^{\vert a\vert}$,  the identity map on the elements of the tuple $a$ is an approximate $\FF$-elementary map from $M$ to $N$. Indeed, let $\varphi$ be an $\FF$-formula such that 
\( M \models \varphi(a) \), so that \( \EuScript{U} \models \varphi(a) \) because \( M \) is a model, and assume towards a contradiction that
$N\not\models \varphi'(a)$ for some $\varphi'>\varphi$. Let $\varphi''$ be such that $\varphi'>\varphi''>\varphi$, so that $N\not\models \varphi''(a)$. 
By Lemma~\ref{lem_interpolation}\ref{lem_interpolation-1} there exists 
$\tilde{\varphi}\perp\varphi$ such that 
$\varphi\rightarrow\neg\tilde{\varphi}\rightarrow\varphi''$. Since $N\not\models \varphi''(a)$ then  $N\models\tilde{\varphi}(a)$. Thus $\EuScript{U}\models \tilde{\varphi}(a)$ because \( N \) is a model, a contradiction.
\end{remark}

By definition of \( \FF \)-elementary embedding,
%recall  that 
the truth of an ${\FF}(M)$-formula is preserved %when passing 
from a model $M$ to the monster  model;
if the converse implication also holds we say that $M$ is a \emph{strong model}. 
By Corollary~\ref{corol_omega_sat}, any \( \omega \)-\( \FF \)-saturated model is strong.
Notice also that any (not necessarily $\omega$-$\FF$-saturated)  strong model $M$ satisfies the conclusion of Proposition~\ref{prop_approx}. Furthermore, by Remark~\ref{remark_identity} 
%and Proposition~\ref{fact_HImorphisms}, 
every model that satisfies the conclusion of Proposition~\ref{prop_approx} is a strong model.  Hence the strong models are precisely the  models for which the equivalence in Proposition~\ref{prop_approx} holds. 
%In particular,  $\omega$-${\FF}$-saturated models are strong. % {\color{red}standard structures}.

For the rest of this section,
we fix a small subset $A$ of $\EuScript{U}$.
For all $n \geq 1$ we denote by $\tau_n(A)$ the topology on $\EuScript{U}^{n}$ for which the closed sets are those defined by the $n$-types $p(x)\subseteq{\FF}(A)$.
By \( \FF \)-saturation the topology $\tau_n(A)$ is compact. 
 
In what follows, if $p(x)\subseteq \FF(A)$ and $\psi(x)$ is an $\FF(A)$-formula, we write $p(x)\rightarrow \psi(x)$ 
when $p(\EuScript{U}) \subseteq \psi(\EuScript{U})$. When working in the monster model $\EuScript{U}$, we freely use infinitary connectives in similar expressions involving types --- the meaning should be clear from the context.

Next we prove two consequences of  ${\FF}$-compactness that hold in our setting.
Note the   differences with  the classical setting. 

\begin{proposition}\label{fact_compactness_imp}
  Let $p(x)\subseteq{\FF}(A)$ and  $\varphi(x)\in{\FF}({A})$.
  \begin{enumerate-(1)}
    \item \label{fact_compactness_imp-1}
    If $p(x)\rightarrow\neg\varphi(x)$, % {\color{red}\fbox{spiegare cosa vuol dire? (tentativo fatto sopra)}} 
    then $\psi(x)\rightarrow\neg\varphi(x)$ for some finite conjunction $\psi(x)$ of formulas in $p(x)$.
    \item \label{fact_compactness_imp-2}
    If $p(x)\rightarrow\varphi(x)$ and $\varphi'>\varphi$, then $\psi(x)\rightarrow\varphi'(x)$ for some finite conjunction $\psi(x)$ of formulas in $p(x)$.
  \end{enumerate-(1)} 
\end{proposition}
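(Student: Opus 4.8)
The plan is to derive part~\ref{fact_compactness_imp-1} directly from the compactness of the topology $\tau_n(A)$, and then to obtain part~\ref{fact_compactness_imp-2} from part~\ref{fact_compactness_imp-1} by passing through a strong negation of $\varphi$ via the interpolation Lemma~\ref{lem_interpolation}\ref{lem_interpolation-1}. The only genuine difficulty is that $\FF$ is not closed under negation, so in part~\ref{fact_compactness_imp-2} one cannot feed $\neg\varphi$ directly into a compactness argument; strong negations, which are bona fide $\FF(A)$-formulas, are exactly the device that circumvents this.

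For part~\ref{fact_compactness_imp-1}, I would first unwind the notation: $p(x)\to\neg\varphi(x)$ means $p(\EuScript{U})\cap\varphi(\EuScript{U})=\emptyset$. Each $\chi\in p$ and the formula $\varphi$ itself are $\FF(A)$-formulas, so each of $\chi(\EuScript{U})$ and $\varphi(\EuScript{U})$ is defined by a (singleton) $\FF(A)$-type and hence is closed in $\tau_n(A)$. Thus $\{\chi(\EuScript{U})\MID\chi\in p\}\cup\{\varphi(\EuScript{U})\}$ is a family of $\tau_n(A)$-closed sets with empty intersection. Since $\tau_n(A)$ is compact (by $\FF$-saturation of $\EuScript{U}$), some finite subfamily already has empty intersection, and by enlarging it if necessary we may assume it includes $\varphi(\EuScript{U})$. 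Writing $\psi$ for the conjunction of the finitely many selected formulas of $p$, we get $\psi(\EuScript{U})\cap\varphi(\EuScript{U})=\emptyset$, that is, $\psi(x)\to\neg\varphi(x)$.

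For part~\ref{fact_compactness_imp-2}, given $\varphi'>\varphi$, I would apply Lemma~\ref{lem_interpolation}\ref{lem_interpolation-1} and keep only the outer strong negation, obtaining $\tilde\varphi\perp\varphi$ with $\varphi\to\neg\tilde\varphi\to\varphi'$ (concretely, take $\tilde\varphi=\tilde\varphi_1$, since $\neg\tilde\varphi_1\to\neg\tilde\varphi_0\to\varphi'$ yields $\neg\tilde\varphi_1\to\varphi'$). The strong negation introduces no new parameters, so $\tilde\varphi\in\FF(A)$. Combining the hypothesis $p\to\varphi$ with $\varphi\to\neg\tilde\varphi$ gives $p\to\neg\tilde\varphi$. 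Now part~\ref{fact_compactness_imp-1}, applied with $\tilde\varphi$ in the role of $\varphi$, produces a finite conjunction $\psi$ of formulas in $p$ with $\psi\to\neg\tilde\varphi$. Chaining with $\neg\tilde\varphi\to\varphi'$ yields $\psi\to\varphi'$, as required.

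I expect the main subtlety to be purely conceptual rather than computational: recognizing that the correct surrogate for $\neg\varphi$ is a strong negation $\tilde\varphi$ satisfying $\varphi\to\neg\tilde\varphi\to\varphi'$, so that the $\FF$-only compactness of part~\ref{fact_compactness_imp-1} can be invoked. All the implications above are read off the monster model $\EuScript{U}$, and compactness of $\tau_n(A)$ is what legitimizes the finite-intersection step; no further saturation input is needed beyond what is already packaged into $\tau_n(A)$.
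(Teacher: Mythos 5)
Your proof is correct and follows essentially the same route as the paper, whose own proof is just the two pointers you expand: a compactness argument for part~(1) and the reduction of part~(2) to part~(1) via a strong negation supplied by Lemma~\ref{lem_interpolation}\ref{lem_interpolation-1}. Your grounding of part~(1) in the compactness of \( \tau_{|x|}(A) \) (i.e.\ in \( \FF \)-saturation of \( \EuScript{U} \)) rather than literally in Theorem~\ref{thm_compattezza} is, if anything, the more accurate reference, since the implication \( p(x) \to \neg\varphi(x) \) is evaluated in \( \EuScript{U} \) and the contradiction requires realizing the type there.
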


\begin{proof}
Part~\ref{fact_compactness_imp-1} is a straightforward consequence of $\FF$-compactness (Theorem~\ref{thm_compattezza}). Part~\ref{fact_compactness_imp-2} follows from Lemma~\ref{lem_interpolation}\ref{lem_interpolation-1} and part~\ref{fact_compactness_imp-1}.
\end{proof}

For a tuple $x$ of variables, ${{\FF}_x}(A)$ denotes the set of $\FF$-formulas whose free variables are in $x$.  We use similar notation for ${\HH}$-formulas and \( \LL \)-formulas.  

Let $a$ be an $n$-tuple of elements from $\EuScript{U}$. We let 
\[
\FF\mbox{-tp}(a/A)=\big\{ \varphi(x)\in{\FF}(A) \MID \EuScript{U}\models\varphi(a) \big\}, \qquad S_{\FF,n}(A)=\big\{\FF\mbox{-tp}(a/A) \MID a\in \EuScript{U}^n\big\}, 
\]
and $ S_{\FF}(A)=\bigcup_{n \in \omega} S_{\FF,n}(A)$. If \( x \) is a tuple of variables, \( S_{\FF,x}(A) \) denotes the subset of \( S_\FF(A) \) consisting of those types whose free variables range in \( x \). 

% Similar notation is used when dealing with the class \( \HH \) in place of \( \FF \).

 \begin{proposition}\label{prop_Hcomplete2}
   Each type $p(x)\in S_{\FF,x}(A)$ is a maximal consistent subset of ${\FF}_x(A)$ in the following sense: for every $\varphi(x)\in{\FF}_x(A)$, either $\varphi(x)\in p(x)$ or%
\footnote{The asymmetric definition of maximality is due to the fact that \( \FF \) is not closed under negations, so we cannot require \( \neg \varphi(x) \in p(x) \).} 
$p(x)\rightarrow\neg\varphi(x)$. 

The same holds for the types in $S_{\HH,x}(A)$ (in which case maximality refers to \( \HH_x(A) \)).
 \end{proposition}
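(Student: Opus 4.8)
The plan is to argue by cases on whether $\EuScript{U}\models\varphi(a)$, where I write $p(x)=\FF\mbox{-tp}(a/A)$ for the tuple $a\in\EuScript{U}^{|x|}$ realising $p$. If $\EuScript{U}\models\varphi(a)$ then $\varphi\in p$ by definition, and there is nothing to prove. All the content lies in the complementary case $\EuScript{U}\not\models\varphi(a)$, where I must establish $p(x)\to\neg\varphi(x)$, that is, that every realisation of $p$ in $\EuScript{U}$ satisfies $\neg\varphi$. No induction on formulas is needed; the argument is direct.

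The idea is to exhibit a \emph{single} formula of $p$ that already forces $\neg\varphi$, namely a suitable strong negation of $\varphi$. First I would invoke that $\EuScript{U}$ is $\FF$-saturated, hence $\omega$-$\FF$-saturated, so Proposition~\ref{prop_approx} applies: from $\EuScript{U}\not\models\varphi(a)$ I obtain $\EuScript{U}\not\models\big\{\varphi(a)\big\}'$, whence there is some approximation $\varphi'>\varphi$ with $\EuScript{U}\not\models\varphi'(a)$. I would then feed this $\varphi'$ into Lemma~\ref{lem_interpolation}\ref{lem_interpolation-1}, obtaining a strong negation $\tilde{\varphi}_0\perp\varphi$ with $\neg\tilde{\varphi}_0\to\varphi'$. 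Since $\EuScript{U}\not\models\varphi'(a)$, this implication forces $\EuScript{U}\not\models\neg\tilde{\varphi}_0(a)$, i.e.\ $\EuScript{U}\models\tilde{\varphi}_0(a)$; as strong negations of $\FF(A)$-formulas are again $\FF(A)$-formulas, this places $\tilde{\varphi}_0$ in $p$. Finally, every strong negation satisfies $\tilde{\varphi}_0\to\neg\varphi$ (the remark after Definition~\ref{def:strongnegation}), so $p(x)\ni\tilde{\varphi}_0(x)\to\neg\varphi(x)$, and therefore $p(x)\to\neg\varphi(x)$, as required.

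The main obstacle---and the reason the maximality condition is stated asymmetrically---is precisely that $\FF$ is not closed under negation, so from $\EuScript{U}\not\models\varphi(a)$ one cannot simply declare $\neg\varphi\in p$. The work is to convert the bare failure of $\varphi$ at $a$ into a genuine $\FF$-formula that witnesses this failure and lies in $p$. This is exactly what the combination of saturation (to descend to a failing approximation $\varphi'$) and the interpolation Lemma~\ref{lem_interpolation} (to manufacture a strong negation sandwiched below $\varphi'$) accomplishes; once $\tilde{\varphi}_0$ is in hand, the validity $\tilde{\varphi}_0\to\neg\varphi$ finishes the argument.

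For the $\HH$ version I would run the identical argument, observing only that both approximations and strong negations of an $\HH$-formula remain $\HH$-formulas (strong negation merely dualises the connectives and quantifiers and replaces atomic subformulas, introducing no quantifier of sort $\sf S$), and that Proposition~\ref{prop_approx}, applied to the $\HH$-formula $\varphi$ regarded as an $\FF$-formula, still yields a failing approximation $\varphi'>\varphi$ within $\HH$. Hence the witnessing $\tilde{\varphi}_0$ lies in $\HH_x(A)$ and belongs to the $\HH$-type, giving maximality relative to $\HH_x(A)$.
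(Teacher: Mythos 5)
Your argument is correct and is essentially identical to the paper's proof: both use Proposition~\ref{prop_approx} (via $\omega$-$\FF$-saturation of $\EuScript{U}$) to extract a failing approximation $\varphi'>\varphi$, then Lemma~\ref{lem_interpolation}\ref{lem_interpolation-1} to produce a strong negation $\tilde{\varphi}\perp\varphi$ that lies in $p(x)$ and implies $\neg\varphi$. Your additional remarks on why the $\HH$ case goes through (strong negations and approximations of $\HH$-formulas stay in $\HH$) are accurate and slightly more explicit than the paper's.
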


 \begin{proof}
 % RISCRITTA DA LUCA (Nov 2023}
%{\color{red}  Let $p(x)={\FF}\mbox{-tp}(a/A)$. 
%  From Lemma~\ref{lem_interpolation} and  Proposition~\ref{prop_approx} we get
%$$\neg\varphi(x)
%  {\rightarrow}
%  {\bigvee_{\tilde{\varphi}\perp\varphi}\tilde{\varphi}(x),}$$
%  that is, $\EuScript{U} \smallsetminus\varphi(\EuScript{U}) \subseteq \bigcup_{\tilde{\varphi}\perp\varphi}\tilde{\varphi}(\EuScript{U})$.
%  Hence, assuming $\neg\varphi(a)$ gives $\tilde{\varphi}(a)$  for some $\tilde{\varphi}\perp\varphi$, and  $p(x)\rightarrow\neg\varphi(x)$ follows.}
 Let $p(x)={\FF}\mbox{-tp}(a/A)$, and pick any \( \varphi(x) \in \FF(A) \). If \( \varphi(x) \notin p(x) \) (i.e.\ \( \neg \varphi(a) \)), then there is \( \varphi' > \varphi \) such that \( \neg \varphi'(a) \) by Proposition~\ref{prop_approx}. 
By Lemma~\ref{lem_interpolation}\ref{lem_interpolation-1} there is \( \tilde{\varphi} \perp \varphi \) such that \( \varphi \to \neg \tilde{\varphi} \to \varphi' \). Thus \( \tilde{\varphi}(a) \), and so \( p(x) \to \tilde{\varphi}(x) \) because \( \tilde{\varphi}(x) \in p(x) \). Since \( \tilde{\varphi} \to \neg \varphi \), it follows that \( p(x) \to \neg \varphi(x) \), as desired.
 \end{proof}

%%%%%%%%%%%%%%%%%%%%%%%%%%%%%%%%%%%%
%%%%%%%%%%%%%%%%%%%%%%%%%%%%%%%%%%%%
%%%%%%%%%%%%%%%%%%%%%%%%%%%%%%%%%%%%
%%%%%%%%%%%%%%%%%%%%%%%%%%%%%%%%%%%%
%%%%%%%%%%%%%%%%%%%%%%%%%%%%%%%%%%%%
\subsection{Approximate elimination of quantifiers of sort \textsf{S}}

In this subsection we show that the quantifiers of sort $S$ can be eliminated at the cost of allowing approximations.  In a classical setting, proving that every complete type over a theory $T$ is equivalent to its quantifier-free part amounts to proving quantifier elimination for $T$. Here we prove that, in our setting, every complete ${\FF}$-type is equivalent to its  restriction to ${\HH}$. However, this equivalence does not imply that every ${\FF}$-formula is equivalent to an ${\HH}$-formula: given $\varphi \in \FF$  and \( \varphi'> \varphi \), we can find $\psi \in \HH$ such that $\varphi \rightarrow \psi$, but we can only obtain \( \psi \to \varphi ' \) as a converse implication.
% a converse implication $\psi \rightarrow \varphi^\prime$ for some $\varphi^\prime >  \varphi$.

We say that a standard structure $M$ is \emph{${\HH}$-homogeneous} if every ${\HH}$-elementary map $f \colon M\to M$ of cardinality $<\vert M\vert$ extends to an automorphism of \( M \). Notice that if \( M \) is \( \omega \)-\( \FF \)-saturated, then one can equivalently consider \emph{approximate} \( \HH \)-elementary maps in the above definition. One could also define a notion of \( \FF \)-homogeneity: however, since \( \HH \subseteq\FF \) the latter would be a weaker notion. The following result is thus the strongest we can expect.

\begin{proposition}\label{prop_homog}
  The monster model (indeed, any $\FF$-saturated standard structure) is ${\HH}$-homogeneous.
\end{proposition}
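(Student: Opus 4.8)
The plan is to run a back-and-forth argument of length $\lambda=|M|$, exactly as in the classical proof that saturated models are homogeneous; the only novelty is that the maps involved preserve only $\HH$-formulas, and one must check that this one-directional preservation still closes the construction into a genuine automorphism. Throughout I work in a fixed $\FF$-saturated standard structure $M$, and I consider partial $\HH$-elementary maps $g\colon M\to M$ with $|\mathrm{dom}\,g|<\lambda$, typically enumerated as $c\mapsto d$.

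First I would isolate a \emph{Forth} step: given such a $g$ and any $a\in M$, there is $b\in M$ with $g\cup\{(a,b)\}$ again $\HH$-elementary. To produce $b$, consider the $\HH(d)$-type $q(x)=\{\psi(x;d)\MID \psi\in\HH,\ M\models\psi(a;c)\}$, which has fewer than $\lambda$ parameters. It is finitely satisfiable: for $\psi_1,\dots,\psi_n\in q$ coming from $\psi_i(a;c)$, the formula $\exists x\,\bigwedge_i\psi_i(x;y)$ is an $\HH$-formula (only an $\sf H$-quantifier and conjunctions are added); it holds at $c$, hence at $d$ by $\HH$-elementarity of $g$, producing a witness of $\{\psi_i(x;d)\}$. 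Since $q\subseteq\HH(d)\subseteq\FF(d)$, by $\FF$-saturation it is realized by some $b$, and $b\models q$ says precisely that $c,a\mapsto d,b$ is $\HH$-elementary.

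The crucial point, replacing ``complete theories yield two-sided elementary maps'', is a \emph{bi-elementarity} claim: any $\HH$-elementary $g\colon M\to M$ with small domain satisfies $M\models\psi(c)\Leftrightarrow M\models\psi(gc)$ for every $\HH$-formula $\psi$. The forward direction is $\HH$-elementarity. For the converse, if $M\not\models\psi(c)$ then, exactly as in the proof of Proposition~\ref{prop_Hcomplete2} (via Proposition~\ref{prop_approx} and Lemma~\ref{lem_interpolation}\ref{lem_interpolation-1}), there is a strong negation $\tilde\psi\perp\psi$ with $M\models\tilde\psi(c)$; crucially $\tilde\psi\in\HH$, because forming a strong negation introduces no $\sf S$-quantifiers. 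Then $M\models\tilde\psi(gc)$ by $\HH$-elementarity, and $\tilde\psi\to\neg\psi$ gives $M\models\neg\psi(gc)$. Hence every partial $\HH$-elementary map occurring below is two-sided, so its inverse is again $\HH$-elementary and the \emph{Back} step is simply the Forth step applied to $g^{-1}$.

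With these tools the construction is routine. Starting from $f$ (which is bi-elementary by the claim), fix an enumeration $\langle m_\xi:\xi<\lambda\rangle$ of $M$ and build an increasing chain $\langle g_\xi:\xi<\lambda\rangle$ of partial $\HH$-elementary maps extending $f$: at successor stages use Forth once to put $m_\xi$ into the domain and once (applied to $g_\xi^{-1}$) to put $m_\xi$ into the range, taking unions at limits. Since $|f|<\lambda$ and $|\xi|<\lambda$ for every $\xi<\lambda$, each $g_\xi$ has domain of size $<\lambda$, and unions of $\HH$-elementary maps are $\HH$-elementary because each $\HH$-formula has finitely many free variables. The resulting $g=\bigcup_\xi g_\xi$ is a bijection of $M$ extending $f$, and it is an automorphism of $\langle M,X\rangle$: being $\HH$-elementary it preserves all (Morleyized, hence two-sidedly preserved) $\LL_{\sf H}$-atomic formulas, and for each function symbol $F$ of sort ${\sf H}^n\to{\sf S}$ the $\HH$-atomic formula $F(x)\in\{F^M(a)\}$ forces $F^M(ga)=F^M(a)$, so $g$ commutes with $F$ while fixing $X$ pointwise. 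I expect the main obstacle to be precisely the bi-elementarity claim: without the maximality of $\HH$-types, and the fact that strong negations of $\HH$-formulas remain in $\HH$, one-directional $\HH$-preservation would not suffice to perform the Back step, and the limit map would fail to be surjective, let alone an automorphism.
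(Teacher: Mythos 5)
Your proof is correct and follows essentially the same route as the paper: a back-and-forth construction whose forth step realizes the conjugated $\HH$-type by $\FF$-saturation (using that $\exists^{\sf H}$ preserves membership in $\HH$ for finite satisfiability), and whose back step rests on the fact that small partial $\HH$-elementary maps of a saturated model are two-sided. The only difference is presentational: where the paper cites Propositions~\ref{fact_HImorphisms} and~\ref{prop_approx} for the invertibility of $f$, you reprove it directly via strong negations as in Proposition~\ref{prop_Hcomplete2}, and you additionally spell out why the limit bijection is an automorphism of $\langle M,X\rangle$.
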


\begin{proof} 
% PARZIALMENTE RISCRITTA DA LUCA (Nov 2023)
%{\color{red}Let $f$ be an ${\HH}$-elementary map such that $\vert f\vert<\vert \EuScript{U}\vert$. Let $b \in \EuScript{U}$. Let $p(x/B)= {\HH}\mbox{-tp}(b/B)$. By saturation of $\EuScript{U}$, the conjugate $\HH$-type $p(x/f(B))$ has a realisation $c$, so $f \cup \{\langle b,c \rangle\}$ is an ${\HH}$-elementary extension of $f$.
%
%As observed in Section~\ref{morphisms}, the map $f$ is injective, hence invertible. By Corollary~\ref{corol_omega_sat}, the inverse $f^{-1}$ is ${\HH}$-elementary, and the same argument can be used with $f^{-1}$ to put any $c \in \EuScript{U}$ in the range of $f$. A standard back-and forth construction then gives the ${\HH}$-homogeneity of $\EuScript{U}$.}

Let $f$ be a partial ${\HH}$-elementary map such that $\vert f\vert<\vert \EuScript{U}\vert$. We use a standard back-and-forth construction to extend \( f \) while maintaining \( \HH \)-elementarity, until we get a total surjective map, i.e.\ an automorphism of \( M \).

We first show how to add a new element \( b \in \EuScript{U} \) to the domain of \( f \). Let \( B ={\rm dom} f \)
and let $p(x/B)= {\HH}\mbox{-tp}(b/B)$. By \( \HH \)-elementarity, the conjugate $\HH$-type $p(x/f(B))$ is finitely satisfiable in \( \EuScript{U} \), hence it has a realisation $c$ by saturation of $\EuScript{U}$. Then $f \cup \{\langle b,c \rangle\}$ is an ${\HH}$-elementary extension of $f$.

To add an element to the range of \( f \), we apply the same argument to \( f^{-1}\). This is possible because, as observed in Section~\ref{morphisms}, the map $f$ is injective, hence invertible, and its inverse \( f^{-1} \) is \( \HH \)-elementary by saturation and Propositions~\ref{fact_HImorphisms} and~\ref{prop_approx}.
\end{proof}

We now strengthen Proposition~\ref{prop_Hcomplete2} {by showing that consistent \( \HH \)-types are already complete with respect to the class of all \( \LL_x(A) \)-formulas.
Recall that, throughout this section, $A\subseteq \EuScript{U}$ is a fixed set with \( |A| < |\EuScript{U} |\).

\begin{corollary}\label{corol_Lcomplete}
  Let $p(x)\in S_{\HH,x}(A)$.
  Then $p(x)$ is complete for  the formulas in ${\LL}_x(A)$.
  Namely, for every $\varphi(x)\in{\LL}_x(A)$, either $p(x)\rightarrow\varphi(x)$ or $p(x)\rightarrow\neg\varphi(x)$.
  
 Furthermore, $p'(x)$ is also complete for  the formulas in ${\LL}_x(A)$.
 \end{corollary}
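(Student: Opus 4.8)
The plan is to prove completeness of $p(x)$ for $\LL_x(A)$ by induction on the complexity of an arbitrary $\LL_x(A)$-formula $\varphi(x)$, with the $\FF$-atomic cases as the base and the crucial observation that $p(x)$, being $\HH$-complete (hence $\FF$-complete by Proposition~\ref{prop_homog} and $\HH$-homogeneity), decides every $\FF$-formula. Let me sketch the base case first. Fix $p(x) = \HH\mbox{-tp}(a/A)$ for some $a \in \EuScript{U}^{|x|}$ realizing $p$. The key point is that for an $\FF$-atomic formula $\psi(x)$ of the form $t \in C$, both $\psi$ and its \emph{classical} negation $\neg\psi$ are decided by $p$: either $\psi(a)$ holds, in which case $\psi \in p$ (after noting via homogeneity/Proposition~\ref{prop_Hcomplete2} that $\FF$-types over $A$ are determined by their $\HH$-part, so $p \to \psi$); or $\psi(a)$ fails, in which case $t^{\EuScript{U}}(a) \notin C$, and since $\{t^{\EuScript{U}}(a)\}$ is a closed set disjoint from $C$, the formula $t \in \{t^{\EuScript{U}}(a)\}$ is a strong negation witnessing $p \to \neg\psi$.

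Next I would carry out the inductive step, which is where the whole point lies. Since $\LL_x(A)$ is closed under all Boolean connectives and all quantifiers (of both sorts), I must show $p$ decides $\neg\varphi$, $\varphi \wedge \chi$, and $\exists y\,\varphi$, $\exists \varepsilon\,\varphi$ given that it decides $\varphi$ and $\chi$. The connective cases are formal: if $p \to \varphi$ and $p \to \chi$ then $p \to \varphi \wedge \chi$, etc., and negation is immediate since completeness for $\varphi$ is by definition symmetric in $\varphi$ and $\neg\varphi$. The genuinely substantive cases are the quantifiers. For $\exists y\,\varphi(x,y)$: if $\EuScript{U} \models \exists y\,\varphi(a,y)$, pick a witness $b$; then $\HH\mbox{-tp}(ab/A)$ extends $p$ and decides $\varphi(x,y)$ positively by the inductive hypothesis, and projecting via the completeness of the extended type gives $p \to \exists y\,\varphi$. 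Conversely, if $\EuScript{U} \models \neg\exists y\,\varphi(a,y)$, then for \emph{every} $b$ the type $\HH\mbox{-tp}(ab/A)$ forces $\neg\varphi$, and here I would invoke $\FF$-saturation together with Proposition~\ref{fact_compactness_imp} (the $\FF$-compactness consequences) to pass from ``every completion decides $\neg\varphi$'' to ``$p \to \neg\exists y\,\varphi$'' uniformly. The sort-$\sf S$ quantifier $\exists \varepsilon$ is handled analogously, using that $\LL_{\sf S}$ has a name for each element of $X$ and that $X$ is compact, so ranging over $X$ is controlled by the same saturation/compactness machinery.

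The main obstacle will be the $\forall$ (equivalently $\neg\exists$) direction of the quantifier step: reducing the infinitary statement ``$p$ together with every choice of witness forces $\neg\varphi$'' to a statement provable from $p$ alone. The clean way is to argue contrapositively inside the monster: if $p \not\to \neg\exists y\,\varphi$, then by completeness (which is exactly what I am establishing, so I must instead argue directly via realizations in $\EuScript{U}$) there is a realization $a \models p$ with $\EuScript{U} \models \exists y\,\varphi(a,y)$, contradicting the assumption that every realization satisfies $\neg\exists y\,\varphi$. This uses only that all realizations of $p$ in the monster have the same $\LL_x(A)$-theory up to the inductive level, which is precisely $\HH$-homogeneity (Proposition~\ref{prop_homog}): any two realizations of an $\HH$-type over $A$ are conjugate by an automorphism fixing $A$, hence satisfy the same $\LL(A)$-formulas. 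This homogeneity observation is what collapses the potential gap and makes the whole induction go through cleanly.

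Finally, for the ``furthermore'' clause about $p'(x)$, I would note that by Proposition~\ref{prop_approx} (applicable because the monster is $\FF$-saturated, hence $\omega$-$\FF$-saturated), the type $p(x)$ is equivalent to its approximation $p'(x)$ in $\EuScript{U}$: they have the same realizations. Since completeness for $\LL_x(A)$-formulas is a property of the set of realizations $p(\EuScript{U})$, and $p'(\EuScript{U}) = p(\EuScript{U})$, the completeness of $p'$ follows immediately from that of $p$.
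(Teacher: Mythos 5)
Your proposal is correct, but it takes a long detour to arrive at what is, in fact, the paper's entire proof. The observation in your third paragraph --- that by Proposition~\ref{prop_homog} any two realizations of an $\HH$-type over $A$ are conjugate by an automorphism of $\EuScript{U}$ fixing $A$, and automorphisms are $\LL$-elementary, so all realizations of $p(x)$ satisfy exactly the same $\LL(A)$-formulas --- already proves the corollary outright: since $p(x)={\HH}\mbox{-tp}(a/A)$ is realized, for any $\varphi(x)\in\LL_x(A)$ either every realization satisfies $\varphi(x)$ or every realization satisfies $\neg\varphi(x)$. That is precisely the paper's three-line argument, and it makes your induction on formula complexity superfluous. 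Moreover, the induction as sketched is not self-contained: both the ``projection'' step for $\exists y\,\varphi$ and the converse $\neg\exists$ direction require transferring a witness from one realization of $p(x)$ to another, which is exactly the homogeneity (or a saturation) argument over again; and Proposition~\ref{fact_compactness_imp} does not apply where you invoke it, since it concerns $\FF$-types and $\FF$-formulas, whereas $\neg\exists y\,\varphi$ is a general $\LL$-formula and no compactness is actually needed at that point. Your treatment of the ``furthermore'' clause matches the paper: $p(\EuScript{U})=p'(\EuScript{U})$ by Proposition~\ref{prop_approx}, and completeness depends only on the set of realizations.
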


\begin{proof}
  If $b\in \EuScript{U}$ is an arbitrary realization of ${\HH}\mbox{-tp}(a/A)$, 
 then the map \( f \) which is the identity on \( A \) and sends \( a \) to \( b \) is \( \HH \)-elementary.
  %then  there is an ${\HH}$-elementary map $f\supseteq{\rm id}_A$ such that $fa=b$.
  By Proposition~\ref{prop_homog}, such $f$ exends to an automorphism of $\EuScript{U}$.
  Since every automorphism is ${\LL}$-elementary, the conclusion follows.
  
  The last part of the statement follows from Proposition~\ref{prop_approx}.
  \end{proof}

Let $a,b\in \EuScript{U}^{n}$. We write $a\equiv_Ab$ if $a$ and $b$ satisfy the same ${\LL}(A)$-formulas in $n$  free variables. 
By Corollary~\ref{corol_Lcomplete}, formulas in ${\HH}(A)$ or ${\FF}(A)$, or their approximations, suffice to test $\equiv_A$.

The next proposition shows that formulas in ${\FF}(A)$ can be approximated by formulas in ${\HH}(A)$.

\begin{proposition}\label{prop_LHapprox1}
  Let $\varphi(x)\in{\FF}(A)$.
  For every  $\varphi'>\varphi$ there is some formula $\psi(x)\in{\HH}(A)$ such that $\varphi(x)\rightarrow\psi(x)\rightarrow\varphi'(x)$. 
\end{proposition}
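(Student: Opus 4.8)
The plan is to work inside the monster model $\EuScript{U}$ and to view each $\FF(A)$-formula $\vartheta(x)$ as defining the set $\vartheta(\EuScript{U})\subseteq\EuScript{U}^{|x|}$, which is closed in the compact topology $\tau_{|x|}(A)$. In these terms I must produce a single $\HH(A)$-formula $\psi$ with $\varphi(\EuScript{U})\subseteq\psi(\EuScript{U})\subseteq\varphi'(\EuScript{U})$, and I would build it as a finite disjunction $\psi=\chi_{a_1}\vee\dots\vee\chi_{a_k}$ extracted from a pointwise family by a compactness (finite subcover) argument. First, by density of $>$ I fix $\varphi'>\varphi'_1>\varphi$ and record the following \emph{opening-up} consequence of Lemma~\ref{lem_interpolation}\ref{lem_interpolation-1}: for any $\FF(A)$-formula $\vartheta$ and any $\vartheta''>\vartheta$ there is a strong negation $\tilde{\vartheta}\perp\vartheta$ with $\vartheta\to\neg\tilde{\vartheta}\to\vartheta''$, so that $\neg\tilde{\vartheta}(\EuScript{U})$ is a $\tau$-open set squeezed between $\vartheta(\EuScript{U})$ and $\vartheta''(\EuScript{U})$; in particular $\vartheta(\EuScript{U})\subseteq\operatorname{int}(\vartheta''(\EuScript{U}))$. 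Applied to $\varphi'_1<\varphi'$ this yields an open set $O:=\operatorname{int}(\varphi'(\EuScript{U}))$ with $\varphi'_1(\EuScript{U})\subseteq O\subseteq\varphi'(\EuScript{U})$.

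Next comes the pointwise construction. Fix $a\in\varphi(\EuScript{U})$. Since $\HH\mbox{-tp}(a/A)$ is complete for $\LL(A)$ by Corollary~\ref{corol_Lcomplete} and $a\models\varphi$, we have $\HH\mbox{-tp}(a/A)\to\varphi$; applying Proposition~\ref{fact_compactness_imp}\ref{fact_compactness_imp-2} with target $\varphi$ and approximation $\varphi'_1$ gives a finite conjunction $\sigma_a$ of formulas of $\HH\mbox{-tp}(a/A)$ — hence $\sigma_a\in\HH(A)$ and $a\models\sigma_a$ — with $\sigma_a\to\varphi'_1$, so that $\sigma_a(\EuScript{U})\subseteq O$. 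I would then \emph{thicken} $\sigma_a$ while staying inside $O$: the closed sets $\{\tau(\EuScript{U})\MID \tau>\sigma_a\}$ have intersection $\sigma_a(\EuScript{U})$ by Proposition~\ref{prop_approx} (as $\EuScript{U}$ is $\FF$-saturated), and this intersection is disjoint from the closed set $\EuScript{U}^{|x|}\smallsetminus O$; by compactness of $\tau_{|x|}(A)$ finitely many such $\tau$ already have their intersection inside $O$, and intersecting the corresponding closed neighbourhoods atomic-by-atomic (monotonicity of $\FF$-formulas in their defining closed sets) produces a single approximation $\chi_a>\sigma_a$ with $\chi_a(\EuScript{U})\subseteq O\subseteq\varphi'(\EuScript{U})$. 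Thus $\chi_a\in\HH(A)$ and $\chi_a\to\varphi'$; moreover, opening-up applied to $\sigma_a<\chi_a$ gives $a\in\sigma_a(\EuScript{U})\subseteq\operatorname{int}(\chi_a(\EuScript{U}))$.

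To finish, the sets $\operatorname{int}(\chi_a(\EuScript{U}))$, as $a$ ranges over $\varphi(\EuScript{U})$, form an open cover of $\varphi(\EuScript{U})$, which is compact (closed in the compact space $\tau_{|x|}(A)$). Extracting a finite subcover indexed by $a_1,\dots,a_k$ and setting $\psi:=\chi_{a_1}\vee\dots\vee\chi_{a_k}\in\HH(A)$, I obtain $\varphi(\EuScript{U})\subseteq\bigcup_i\operatorname{int}(\chi_{a_i}(\EuScript{U}))\subseteq\psi(\EuScript{U})$, i.e.\ $\varphi\to\psi$, while $\psi\to\varphi'$ holds since each $\chi_{a_i}\to\varphi'$. (If $\varphi$ is unsatisfiable one simply takes $\psi$ to be any unsatisfiable $\HH(A)$-formula, e.g.\ $t\in\varnothing$.) The step I expect to be the crux is \emph{openness}: the natural pointwise witnesses $\sigma_a$ define \emph{closed} sets, and a bare closed cover of the compact set $\varphi(\EuScript{U})$ need not admit a finite subcover. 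The remedy is to thicken each $\sigma_a$ into $\chi_a$ so that its realizing point lands in the \emph{interior} of $\chi_a(\EuScript{U})$, and it is precisely the interpolation of Lemma~\ref{lem_interpolation} (through strong negations) together with the compactness of $\tau_{|x|}(A)$ that make this thickening possible while keeping $\chi_a$ below $\varphi'$.
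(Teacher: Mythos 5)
Your proof is correct, but it is organized differently from the paper's. The paper argues on the complement: by Corollary~\ref{corol_Lcomplete} every point of $\neg\varphi(\EuScript{U})$ realizes a complete $\HH$-type $p(x)$ with $p(x)\to\neg\varphi(x)$; Proposition~\ref{fact_compactness_imp}\ref{fact_compactness_imp-1} applied to $p'(x)$ plus Lemma~\ref{lem_interpolation} turn each such $p$ into a single $\HH(A)$-formula $\psi_p$ with $\varphi\to\psi_p$ and $p\to\neg\psi_p$; this shows that the type $q(x)$ of \emph{all} $\HH(A)$-consequences of $\varphi$ satisfies $q(x)\to\varphi(x)$, and one final application of Proposition~\ref{fact_compactness_imp}\ref{fact_compactness_imp-2} extracts a finite \emph{conjunction} of members of $q$ implying $\varphi'$. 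You instead cover the compact set $\varphi(\EuScript{U})$ directly by the $\tau$-open sets $\operatorname{int}(\chi_a(\EuScript{U}))$ and take a finite subcover, ending with a finite \emph{disjunction}. Both routes rest on the same four ingredients (Corollary~\ref{corol_Lcomplete}, Proposition~\ref{fact_compactness_imp}, Lemma~\ref{lem_interpolation}, Proposition~\ref{prop_approx}), and your identification of the crux --- that the natural pointwise witnesses $\sigma_a$ are $\tau$-closed, so one must thicken them to get an open cover --- is exactly the difficulty the paper sidesteps by phrasing everything as a finite-intersection-property argument on types (where closed sets are the right objects) rather than as an open-cover argument. The price of your version is one extra compactness application per point (the thickening of $\sigma_a$ to $\chi_a$) and the auxiliary interpolant $\varphi'_1$ needed to keep $\sigma_a(\EuScript{U})$ inside $\operatorname{int}(\varphi'(\EuScript{U}))$; what it buys is a more transparent topological picture of $\varphi(\EuScript{U})$ as a compact set squeezed between $\HH$-definable open and closed sets. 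One small point worth making explicit: the implications $\varphi\to\psi\to\varphi'$ you obtain are, as in the paper, implications evaluated in $\EuScript{U}$ (the convention fixed at the start of Section~\ref{monster}), since the argument only inspects $\varphi(\EuScript{U})$ and $\psi(\EuScript{U})$.
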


\begin{proof} 
By Corollary~\ref{corol_Lcomplete}
\[
\neg\varphi(x)\rightarrow\bigvee \big \{ p(x) \MID p(x) \in S_{\HH,x}(A) \text{ and }p(x)\rightarrow\neg\varphi(x) \Big\},
\]
that is, for all \( a \in \EuScript{U}^{|x|} \), if \( \neg \varphi(a) \) then \( p(a) \) for some \( p(x) \in S_{\HH,x}(A) \) with \( p(x) \to \neg \varphi(x) \).
%where $p(x)$ ranges over $S_{{\HH},x}(A)$.
% For, let $a\in{U}^{\vert x\vert}$. If $\neg\varphi(a)$ then, for all types $p(x)$ in  
% $S_{{\HH},x}(A)$, $p(a)\rightarrow \neg\varphi(a)$. In particular, the latter holds for
%  $q(x)={\HH}\mbox{-tp}(a/A)$ as $q(x)$ is realized by $a$.

%PEZZO RISCRITTO DA LUCA (Nov 2023) PER CHIARIRE MEGLIO
% By  Proposition~\ref{prop_approx} we get
%  
%  \ceq{\hfill\neg\varphi(x)}{\rightarrow}{\bigvee_{p'(x)\rightarrow\neg\varphi(x)} p'(x).}
%  
%By Proposition~\ref{fact_compactness_imp}(1), for each type $p(x)$ such that $p'(x)\rightarrow\neg\varphi(x)$,  there is an approximation 
% $\chi'_p(x)$
%of some formula $\chi_p\in p(x)$ such that $\chi'_p(x)\rightarrow\neg\varphi(x)$ (note that   $p'(x)$ is closed under finite conjunctions). 

 For each \( p(x) \in S_{\HH,x}(A) \) such that \( p(x) \to \neg \varphi(x) \) we also have \( p'(x) \to \neg \varphi(x) \) by Proposition~\ref{prop_approx}, hence by Proposition~\ref{fact_compactness_imp}\ref{fact_compactness_imp-1} applied to \( p'(x) \) there is an approximation \( \chi'_p(x) \) of some formula $\chi_p\in p(x)$ such that $\chi'_p(x)\rightarrow\neg\varphi(x)$. (Here we use the fact that \( p(x) \), and hence also   $p'(x)$, is closed under finite conjunctions.)

By applying Lemma~\ref{lem_interpolation}\ref{lem_interpolation-1} to $\chi_p<\chi_p'$ we get  a formula $\psi_p$ such that $\psi_p\perp\chi_p$ and $\chi_p\rightarrow\neg\psi_p\rightarrow\chi'_p$. 
Note that \( \psi_p\perp\chi_p \) implies that \( \psi_p \) is an \( \HH(A) \)-formula because so is \( \chi_p \).
%, when applied to an $\HH(A)$-formula, the proof of Lemma~\ref{lem_interpolation}(1) returns an  $\HH(A)$-formula.)

Hence
\[
\neg\varphi(x) \rightarrow \bigvee\big\{\neg\psi(x) \MID  \psi(x)\in\HH(A) \text{ and } \neg{\psi}(x)\rightarrow\neg\varphi(x)\}
\]
or, equivalently,
%\[
% \varphi(x) \leftarrow \bigwedge\big\{\psi(x) \MID  \psi(x)\in\HH(A) \text{ and } {\varphi(x)}\rightarrow{\psi}(x)\}.
% \]
\( q(x) \to \varphi(x) \), where \( q (x) \subseteq \HH(A) \subseteq \FF(A) \) is the type 
\[
q(x) = \big\{\psi(x) \MID  \psi(x)\in \HH(A) \text{ and } {\varphi(x)}\rightarrow{\psi}(x)\}.
\]
By applying Proposition~\ref{fact_compactness_imp}\ref{fact_compactness_imp-2}  
to \( q(x) \),
%to the \( \HH \)-type $\big\{\psi(x) \MID  \psi(x)\in \HH(A) \text{ and } {\varphi(x)}\rightarrow{\psi}(x)\}$, 
we get that for every $\varphi'>\varphi$ there are finitely many ${\psi}_1(x), \dotsc, \psi_n(x) \in q(x)$ such that \( \bigwedge_{1 \leq i \leq n}{\psi}_i(x) \to \varphi'(x) \). But since \( \psi_i(x) \in q(x) \) we also have \( \varphi(x) \to \psi_i(x) \) for every $1\leq i\leq n$, thus
\[
\varphi(x) \to \bigwedge_{1 \leq i \leq n}{\psi}_i(x) \to \varphi'(x)
\]  
and the middle conjunction  is
 the required \( \HH(A) \)-formula \( \psi(x) \).
\end{proof}

We also prove an approximation result for the negation of formulas in ${\FF}(A)$.

\begin{proposition}\label{prop_LHapprox2}
  Let $\varphi(x)\in{\FF}(A)$ be such that $\neg\varphi(x)$ is consistent.
  Then there is a consistent formula  $\psi(x)\in{\HH}(A)$ such that  $\psi'(x)\rightarrow\neg\varphi(x)$ for some $\psi'>\psi$.
  %Then  $\psi'(x)\rightarrow\neg\varphi(x)$ for some consistent formula  $\psi(x)\in{\HH}(A)$ and some $\psi'>\psi$.
\end{proposition}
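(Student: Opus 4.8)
The plan is to dualize the approximation result for formulas (Proposition~\ref{prop_LHapprox1}) by applying it to a strong negation of $\varphi$. Since $\neg\varphi(x)$ is consistent, I would first pass to a witness: pick some $a \in \EuScript{U}^{|x|}$ with $\EuScript{U} \models \neg\varphi(a)$. By Proposition~\ref{prop_approx} (which holds in the monster model, as $\EuScript{U}$ is $\FF$-saturated), from $\neg\varphi(a)$ I would extract an approximation $\varphi' > \varphi$ with $\EuScript{U} \models \neg\varphi'(a)$, and then by density of $>$ choose $\varphi''$ with $\varphi' > \varphi'' > \varphi$, so that in fact $\neg\varphi''(a)$ holds as well.

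Next I would invoke Lemma~\ref{lem_interpolation}\ref{lem_interpolation-1} applied to the approximation $\varphi'' > \varphi$, obtaining strong negations $\tilde\varphi_0 < \tilde\varphi_1 \perp \varphi$ with the interpolation chain $\varphi \to \neg\tilde\varphi_1 \to \neg\tilde\varphi_0 \to \varphi''$. The key point is that a strong negation $\tilde\varphi$ of $\varphi$ is an honest $\FF(A)$-formula, and moreover an $\HH(A)$-formula whenever $\varphi$ is (strong negation replaces quantifiers of sort $\sf S$ by their duals but introduces no new ones); in the general case where $\varphi \in \FF(A)$, the strong negations $\tilde\varphi_i$ still lie in $\FF(A)$. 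From $\neg\varphi''(a)$ together with $\neg\tilde\varphi_0 \to \varphi''$, I get $\tilde\varphi_0(a)$, so $\tilde\varphi_0$ is consistent.

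The plan is then to set $\psi := \tilde\varphi_0$ and verify the two requirements. Consistency of $\psi$ is witnessed by $a$. For the approximation clause, I would use Lemma~\ref{lem_interpolation}\ref{lem_interpolation-2} applied to $\tilde\varphi_0 \perp \varphi$: this yields an approximation $\psi' > \psi = \tilde\varphi_0$ with $\tilde\varphi_0 \to \psi' \to \neg\varphi$ — wait, more carefully, Lemma~\ref{lem_interpolation}\ref{lem_interpolation-2} produces $\psi' > \tilde\varphi_0$ with $\tilde\varphi_0 \to \psi' \to \neg\varphi$ only after identifying $\varphi$ as the strong negation of $\tilde\varphi_0$, so I must check that the strong-negation relation $\perp$ can be read in the correct direction. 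The main obstacle, and the point requiring the most care, is precisely ensuring that $\psi \in \HH(A)$ rather than merely $\FF(A)$: since $\varphi$ is an arbitrary $\FF(A)$-formula, its strong negations need not avoid quantifiers of sort $\sf S$. To handle this I would first apply Proposition~\ref{prop_LHapprox1} to replace $\varphi$ (up to the approximation $\varphi''$) by an $\HH(A)$-formula sandwiched as $\varphi \to \theta \to \varphi''$ with $\theta \in \HH(A)$, and then take the strong negation of $\theta$ instead of $\varphi$; the strong negation $\tilde\theta$ of an $\HH(A)$-formula is again in $\HH(A)$, and $\neg\theta \to \neg\varphi$ propagates the desired implication. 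Thus the final $\psi$ is built as a strong negation of the interpolating $\HH$-formula $\theta$, and the interpolation lemma supplies the approximation $\psi' > \psi$ with $\psi' \to \neg\theta \to \neg\varphi$.
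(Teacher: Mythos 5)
Your argument is correct in outline but takes a genuinely different route from the paper's. The paper works directly with the full type $p(x)=\HH\mbox{-tp}(a/A)$ of a witness $a$ of $\neg\varphi$: by Corollary~\ref{corol_Lcomplete} the approximated type $p'(x)$ already implies $\neg\varphi(x)$, Proposition~\ref{fact_compactness_imp}\ref{fact_compactness_imp-1} extracts from $p'(x)$ a single formula $\psi'(x)$ with $\psi'(x)\to\neg\varphi(x)$, and by construction $\psi'>\psi$ for some $\psi\in p(x)\subseteq\HH(A)$ realized by $a$. You instead derive the statement from Proposition~\ref{prop_LHapprox1} combined with the strong-negation interpolation machinery of Lemma~\ref{lem_interpolation}. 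Both routes rest on the same underlying ingredients (homogeneity of the monster via Corollary~\ref{corol_Lcomplete}, plus $\FF$-compactness), since Proposition~\ref{prop_LHapprox1} packages exactly those; the paper's version is shorter because it pulls the single $\HH$-formula straight out of the type instead of re-entering the interpolation lemma.

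Two points in your write-up need tightening, though neither is fatal. First, your appeal to Lemma~\ref{lem_interpolation}\ref{lem_interpolation-2} requires reading $\perp$ backwards, i.e.\ that $\theta$ is itself a strong negation of $\tilde\theta$; this does hold (disjointness of closed sets is symmetric, duality is an involution, and double negations of $\LL_{\sf H}$-atoms are absorbed by the Morleyization convention of Remark~\ref{rem3}), but you can sidestep it entirely: Lemma~\ref{lem_interpolation}\ref{lem_interpolation-1} already hands you a pair $\tilde\theta_0<\tilde\theta_1$ with $\tilde\theta_1\to\neg\theta\to\neg\varphi$, so you may take $\psi=\tilde\theta_0$ and $\psi'=\tilde\theta_1$. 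Second, and more importantly, after replacing $\varphi$ by the interpolating $\HH$-formula $\theta$ you must still certify that the strong negation of $\theta$ you select is \emph{consistent} --- an arbitrary strong negation (e.g.\ one using $t\in\emptyset$) can be unsatisfiable, and this consistency is the whole content of the proposition. The computation you carried out for $\varphi$ (from $\neg\varphi''(a)$ and $\neg\tilde\varphi_0\to\varphi''$ you deduced $\tilde\varphi_0(a)$) must be rerun verbatim for $\theta$: from $\theta\to\varphi''$ you get $\neg\theta(a)$, Proposition~\ref{prop_approx} gives $\theta'>\theta$ with $\neg\theta'(a)$, and Lemma~\ref{lem_interpolation}\ref{lem_interpolation-1} applied to $\theta'>\theta$ produces a strong negation $\tilde\theta_0\in\HH(A)$ with $\neg\tilde\theta_0\to\theta'$, hence $\tilde\theta_0(a)$. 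With those two steps made explicit your proof is complete.
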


% \begin{proposition}\label{prop_LHapprox2}
%   Let $\varphi(x)\in{\FF}(A)$. For every given $\tilde{\varphi}\perp\varphi$ there is some $\psi(x)\in{\HH}(A)$ such that $\varphi(x)\rightarrow\psi(x)\rightarrow\neg\tilde{\varphi}(x)$.
% \end{proposition}

\begin{proof}
  Let $a\in \EuScript{U}^{|x|}$ be such that $\neg\varphi(a)$, and let $p(x)={\HH}\mbox{-tp}(a/A)$.
  By Corollary~\ref{corol_Lcomplete}, $p'(x)\rightarrow\neg\varphi(x)$. 
 By Proposition~\ref{fact_compactness_imp}\ref{fact_compactness_imp-1} applied to \( p' (x) \), there is some $\psi'(x) \in p(x)$  such that $\psi'(x)\rightarrow\neg\varphi(x)$.  (Here we use again that \( p'(x) \) is closed under finite conjunctions.)
 By definition of \( p'(x) \),  \( \psi' > \psi \) for some \( \psi(x) \in p(x) \subseteq \HH(A) \), and \( \psi(x) \) is consistent because it is realized by \( a \).
\end{proof}

%%%%%%%%%%%%%%%%%%%%%%%%%%%%%%%%%%%%
%%%%%%%%%%%%%%%%%%%%%%%%%%%%%%%%%%%%
%%%%%%%%%%%%%%%%%%%%%%%%%%%%%%%%%%%%
%%%%%%%%%%%%%%%%%%%%%%%%%%%%%%%%%%%%
%%%%%%%%%%%%%%%%%%%%%%%%%%%%%%%%%%%%
%%%%%%%%%%%%%%%%%%%%%%%%%%%%%%%%%%%%
%%%%%%%%%%%%%%%%%%%%%%%%%%%%%%%%%%%%
\subsection{The Tarski-Vaught Test and the L\"owenheim-Skolem Theorem}

The following proposition can be regarded as an analogue of the Tarski-Vaught Test  when the 
larger  structure is the monster model.

\begin{proposition}\label{prop_Tarski_Vaught}
%   of ${U}$.
  The following are equivalent for a subset $M$ of $\EuScript{U}$.
  \begin{enumerate-(1)}
    \item \label{prop_Tarski_Vaught-1}
     $M$ is the domain of a model, i.e.\ \( M \) is an \( \FF \)-elementary standard substructure of \( \EuScript{U} \) when equipped with the restriction of the interpretations in \( \EuScript{U} \) of the symbols of \( \LL \);
    \item \label{prop_Tarski_Vaught-2}
    for every formula $\varphi(x)\in{\HH}(M)$  without free variables of sort $\sf S$,
    \[
    \exists x \, \varphi(x) \quad \Rightarrow \quad \text{ for every }\varphi'>\varphi\text{ there is  } a\in M \text{ such that }\varphi'(a);
    \]
%      \noindent\kern-\leftmargin
%    \ceq{\hfill\exists x\,\varphi(x)}{\Rightarrow}
%    {\textrm{ for every }\varphi'>\varphi\textrm{ there is  }a\in M\textrm{ such that }\varphi'(a);}
    \item \label{prop_Tarski_Vaught-3}
    for every formula $\varphi(x)\in{\FF}(M)$ without free variables of sort $\sf S$,
    \[
    \exists x \, \neg \varphi(x) \quad \Rightarrow \quad \text{ there is }a\in M\text{ such that }\neg\varphi(a).
    \]
%    \noindent\kern-\leftmargin
%    \ceq{\hfill\exists x\,\neg\varphi(x)}{\Rightarrow}
%    {\textrm{ there is }a\in M\textrm{ such that }\neg\varphi(a).}
\end{enumerate-(1)}
\end{proposition}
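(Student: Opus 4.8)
The plan is to prove the cycle \( \ref{prop_Tarski_Vaught-1} \Rightarrow \ref{prop_Tarski_Vaught-2} \Rightarrow \ref{prop_Tarski_Vaught-3} \Rightarrow \ref{prop_Tarski_Vaught-1} \). The engine for the first implication is the fact, recorded in Remark~\ref{remark_identity}, that the identity map \( M \to \EuScript{U} \) is approximate \( \FF \)-elementary. Combining this with Proposition~\ref{fact_HImorphisms} and Proposition~\ref{prop_approx} (recall \( \EuScript{U} \) is \( \FF \)-saturated) yields \( \EuScript{U} \models \theta(a) \Leftrightarrow M \models \big\{ \theta(a) \big\}' \) for every \( \theta \in \FF(M) \) and \( a \in M \). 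Applying this to the sentence \( \exists x \, \varphi(x) \), and noting that its approximations are exactly the formulas \( \exists x \, \varphi'(x) \) with \( \varphi' > \varphi \), from \( \EuScript{U} \models \exists x \, \varphi(x) \) I obtain \( M \models \exists x \, \varphi'(x) \) for each \( \varphi' > \varphi \); a witness \( a \in M \) of \( \varphi' \) then satisfies \( \EuScript{U} \models \varphi'(a) \) by the \( \FF \)-elementarity of \( M \), which is \ref{prop_Tarski_Vaught-2}.

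For \( \ref{prop_Tarski_Vaught-2} \Rightarrow \ref{prop_Tarski_Vaught-3} \) I would first reduce from \( \FF \) to \( \HH \). Given \( \varphi \in \FF(M) \) with \( \EuScript{U} \models \exists x \, \neg\varphi(x) \), fix a witness \( c \); by Proposition~\ref{prop_approx} there is \( \varphi' > \varphi \) with \( \neg\varphi'(c) \), and Proposition~\ref{prop_LHapprox1} supplies \( \psi \in \HH(M) \) with \( \varphi \to \psi \to \varphi' \). Then \( \neg\psi(c) \), and since \( \varphi \to \psi \) it suffices to find an \( M \)-witness of \( \neg\psi \). For this \( \HH(M) \)-formula \( \psi \), apply Proposition~\ref{prop_approx} once more to get \( \psi'' > \psi \) with \( \neg\psi''(c) \), and then Lemma~\ref{lem_interpolation}\ref{lem_interpolation-1} to produce strong negations \( \tilde{\psi}_0 < \tilde{\psi}_1 \perp \psi \) with \( \psi \to \neg\tilde{\psi}_1 \to \neg\tilde{\psi}_0 \to \psi'' \). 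From \( \neg\psi''(c) \) I read off \( \tilde{\psi}_0(c) \), so \( \EuScript{U} \models \exists x \, \tilde{\psi}_0(x) \) with \( \tilde{\psi}_0 \in \HH(M) \); applying \ref{prop_Tarski_Vaught-2} to \( \tilde{\psi}_0 \) and its approximation \( \tilde{\psi}_1 \) gives \( a \in M \) with \( \tilde{\psi}_1(a) \), and \( \tilde{\psi}_1 \to \neg\psi \to \neg\varphi \) makes \( a \) the desired witness.

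The core of the proposition is \( \ref{prop_Tarski_Vaught-3} \Rightarrow \ref{prop_Tarski_Vaught-1} \), a Tarski--Vaught induction. First I would verify that \( M \) is closed under the function symbols of \( \LL \): for a home-sort symbol \( g \) and \( a \in M \), applying \ref{prop_Tarski_Vaught-3} to \( \varphi(y) := (y \neq g(a)) \) — whose negation is \( y = g(a) \) — produces \( b \in M \) with \( b = g^{\EuScript{U}}(a) \), while symbols of sort \( {\sf H}^n \to {\sf S} \) take values in the fixed space \( X \); hence \( M \), with the interpretations inherited from \( \EuScript{U} \), is a standard substructure. It then remains to show \( M \models \chi(a) \Rightarrow \EuScript{U} \models \chi(a) \) for all \( \chi \in \FF(M) \), by induction on \( \chi \). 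The \( \FF \)-atomic cases are absolute, since functions and relations are interpreted by restriction and each \( C \) is fixed; \( \wedge \) and \( \vee \) are routine; and crucially both quantifiers of sort \( \sf S \) are absolute for free, because the space sort is the same fixed \( X \) in \( M \) and in \( \EuScript{U} \), so no witness condition is needed. The existential home-sort quantifier is the trivial direction, and the sole use of \ref{prop_Tarski_Vaught-3} is in the universal home-sort case \( \forall y \, \theta(y) \): if \( M \models \forall y\,\theta(y) \) but \( \EuScript{U} \models \exists y\, \neg\theta(y) \), then \ref{prop_Tarski_Vaught-3} yields \( b \in M \) with \( \EuScript{U} \models \neg\theta(b) \), contradicting the inductive hypothesis \( M \models \theta(b) \Rightarrow \EuScript{U} \models \theta(b) \).

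The main obstacle is conceptual rather than computational, and it stems from \( \FF \) not being closed under negation. This asymmetry is precisely why the three conditions must be stated differently — \ref{prop_Tarski_Vaught-2} positively up to approximation, \ref{prop_Tarski_Vaught-3} in negated form — and why the induction can only deliver the one-sided implication \( M \models \chi \Rightarrow \EuScript{U} \models \chi \); the negated phrasing of \ref{prop_Tarski_Vaught-3} is exactly what feeds the universal-quantifier step through its contrapositive. The most delicate bookkeeping will be in \( \ref{prop_Tarski_Vaught-2} \Rightarrow \ref{prop_Tarski_Vaught-3} \), where approximations and strong negations must be interleaved correctly (and where the passage from \( \FF \) to \( \HH \) via Proposition~\ref{prop_LHapprox1} is essential, since \ref{prop_Tarski_Vaught-2} is only available for \( \HH \)-formulas).
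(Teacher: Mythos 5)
Your proof is correct and follows essentially the same route as the paper's: the same cycle \ref{prop_Tarski_Vaught-1}\( \Rightarrow \)\ref{prop_Tarski_Vaught-2}\( \Rightarrow \)\ref{prop_Tarski_Vaught-3}\( \Rightarrow \)\ref{prop_Tarski_Vaught-1}, with the Tarski--Vaught-style induction in the last implication keyed, exactly as in the paper, to the universal home-sort quantifier via the contrapositive of \ref{prop_Tarski_Vaught-3}. The only cosmetic differences are that for \ref{prop_Tarski_Vaught-2}\( \Rightarrow \)\ref{prop_Tarski_Vaught-3} the paper simply invokes Proposition~\ref{prop_LHapprox2}, which packages the combination of Proposition~\ref{prop_LHapprox1}, Proposition~\ref{prop_approx} and Lemma~\ref{lem_interpolation} that you carry out by hand, and that for \ref{prop_Tarski_Vaught-1}\( \Rightarrow \)\ref{prop_Tarski_Vaught-2} it uses Lemma~\ref{lem_interpolation}\ref{lem_interpolation-1} directly where you appeal to the approximate \( \FF \)-elementarity of the identity map.
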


\begin{proof}

%Note that the formula $\varphi(x)$ in (2) and (3) does not contain free variables of sort $\sf S$.

\ref{prop_Tarski_Vaught-1}~$\Rightarrow$~\ref{prop_Tarski_Vaught-2}.
Assume  that $\exists x\,\varphi(x)$,
%,  that is, that $\EuScript{U} \models \exists x\,\varphi(x)$. 
and let $\varphi'>\varphi$.
  By Lemma~\ref{lem_interpolation}\ref{lem_interpolation-1} there is some $\tilde{\varphi}\perp\varphi$ such that  $\varphi(x)\rightarrow\neg\tilde{\varphi}(x)\rightarrow\varphi'(x)$.
  Then $\neg\forall x\,\tilde{\varphi}(x)$  and so, by part~\ref{prop_Tarski_Vaught-1}, $M\models\neg\forall x\,\tilde{\varphi}(x)$.
  Then $M\models\neg\tilde{\varphi}(a)$ for some $a\in M$. Hence $M\models\varphi'(a)$, and $\varphi'(a)$ follows by~\ref{prop_Tarski_Vaught-1} again.

\ref{prop_Tarski_Vaught-2}~\( \Rightarrow \)~\ref{prop_Tarski_Vaught-3}.
  Assume 
%  (2) and let $\varphi(x)\in{\FF}(M)$ be such 
  that $\exists x\,\neg\varphi(x)$.
  By Proposition~\ref{prop_LHapprox2}, there are a consistent formula $\psi(x)\in{\HH}(M)$ and an  approximation $\psi'$ of $\psi$ such that $\psi'(x)\rightarrow\neg\varphi(x)$.
  Since \( \exists x \, \psi(x) \),
  by~\ref{prop_Tarski_Vaught-2}  there is \( a \in M \) such that \( \psi'(a) \), hence \( \neg \varphi(a) \) and~\ref{prop_Tarski_Vaught-3} is verified.
%  Then (3) follows.

\ref{prop_Tarski_Vaught-3}~\( \Rightarrow \)~\ref{prop_Tarski_Vaught-1}.
 % Assume (3). 
 Recall that 
 %the formulas of the first-order language  $\LL_{\sf H}(M)$  live in ${\HH}(M)$. 
 \( \LL_{\sf H}(M) \subseteq {\HH}(M) \).
 Then by~\ref{prop_Tarski_Vaught-3} it is easy to see that $M$ is closed under the interpretations of the constant and the function symbols in $\LL_{\sf H}(M)$.
  Hence $M$ is the domain of an ${\LL_{\sf H}}$-substructure of $\EuScript{U}$.  
  By~\ref{prop_Tarski_Vaught-3} and the classical Tarski-Vaught test,   $M\preceq^{\LL_{\sf H}} \EuScript{U}$.  
 This also implies that \( \langle M,X \rangle \) is an \( \LL \)-substructure of \( \langle \EuScript{U},X \rangle \), which means that \( t^M(a) = t(a) \) for all \( \LL \)-terms \( t(x) \) and \( a \in M^{|x|} \). Therefore, 
%Furthermore, \todo{Questo non lo capisco}  by definition of  ${\FF}$-atomic formula,  we get that, 
  for every 
${\FF}$-atomic formula  $\varphi(x)$  and   every $a\in M^{|x|}$
 \[
M\models \varphi(a) \quad \Leftrightarrow \quad \varphi(a).
 \]

 %After establishing (1) for ${\FF}$-atomic formulas,  we prove it by induction on $\varphi(x).$ We consider the case of a formula of the form 
 For the inductive step, consider the ${\FF}$-formula $\forall y\, \varphi(x,y)$. The inductive assumption is that,  for all $a\in M^{|x|}$ and all $b\in M^{|y|}$, if
  $ {M\models\varphi(a,b)}$ then ${\varphi(a,b)}$.
We prove by contraposition that
if $M\models\forall y\,\varphi(a,y)$ then $\forall y\,\varphi(a,y)$.
  Indeed, using~\ref{prop_Tarski_Vaught-3} in the second step gives
\begin{align*}
\neg\forall y\,\varphi(a,y) \quad & \Rightarrow \quad \exists y\,\neg\varphi(a,y) & \\
& \Rightarrow \quad \neg\varphi(a,b) & \text{for some } b\in M^{|y|}\\
& \Rightarrow \quad M\not\models\varphi(a,b) & \text{for some } b\in M^{|y|}\\
& \Rightarrow \quad M\not\models\forall y\,\varphi(a,y). &
\end{align*}
%  \ceq{\hfill\neg\forall y\,\varphi(a,y)}
%  {\Rightarrow}{\exists y\,\neg\varphi(a,y)}
%  
%  \ceq{}
%  {\Rightarrow}
%  {\neg\varphi(a,b)}\qquad for some $b\in M^{|y|}$ %\kern30ex
%
%  \ceq{}
%  {\Rightarrow}
%  {M\not\models\varphi(a,b)}\quad for some $b\in M^{|y|}$\kern30ex
%
%  \ceq{}
%  {\Rightarrow}
%  {M\not\models\forall y\,\varphi(a,y).}

  The inductive steps for the connectives $\vee$, $\wedge$ and the quantifiers $\existsH$, $\existsS$, and $\forallS$ are straightforward.
 \end{proof}

 In the classical setting, an important application of the Tarski-Vaught Test is in the proof of the Downward L\"owenheim-Skolem Theorem.
 In our setting, the latter holds  for arbitrary ${\EuScript L}$-structures;
 in particular, every subset $A$ of ${\EuScript U}$ is contained in a model of cardinality at most $\vert{\EuScript L}(A)\vert$.
However, in the latter form the L\"owenheim-Skolem Theorem is not very informative
 because of the potentially large cardinality of $\LL_{\sf S}$.
To prove a more meaningful result, we introduce the notion of \emph{separable} language.

 % {\color{red}\fbox{SEPARABILITA' DATA COME DEFINIZIONE  06.09}}
 \begin{definition}\label{def_sep}
 A  language ${\EuScript L}$ is \emph{separable} if there is a countable subset $\FF_0$ of ${\EuScript F}$ such that for all $\varphi \in \FF$ and all \( \varphi'>\varphi \) there exists $\varphi_0 \in \FF_0$ such that $\varphi\rightarrow\varphi_0\rightarrow\varphi'$. 
 %holds in every ${\EuScript L}$-structure.
 \end{definition}

 \begin{proposition}
   Assume that ${\EuScript L}$ is a separable language.
   Let $A\subseteq {\EuScript U}$ be countable.
   Then there is a countable model $M$ containing $A$. 
 \end{proposition}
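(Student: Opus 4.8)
The plan is to run a downward Löwenheim--Skolem argument, using the separability of $\LL$ to keep the constructed model countable, and verifying the required closure property via the Tarski--Vaught Test in the form of Proposition~\ref{prop_Tarski_Vaught}\ref{prop_Tarski_Vaught-3}. First I would fix a countable family $\FF_0$ witnessing separability, as in Definition~\ref{def_sep}. The key point is that when building a model around $A$ by the usual iterated ``closing-off'' construction, at each stage I only need to add witnesses for the formulas in a countable set, so that countability is preserved through an $\omega$-length construction.

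More precisely, I would build an increasing chain of countable sets $A = A_0 \subseteq A_1 \subseteq \cdots$ as follows. Given a countable $A_k$, consider all formulas $\varphi(x) \in \FF(A_k)$ without free variables of sort $\sf S$ such that $\exists x\, \neg\varphi(x)$ holds in $\EuScript U$. Naively there may be uncountably many such formulas, so the separability hypothesis is essential here: by Proposition~\ref{prop_LHapprox2}, each such consistent $\neg\varphi$ is implied by some $\psi'(x) \to \neg\varphi(x)$ with $\psi(x) \in \HH(A_k)$ consistent, and using $\FF_0$ one can reduce the relevant witnessing tasks to a \emph{countable} collection of formulas over $A_k$. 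For each formula in this countable collection I choose, using $\FF$-saturation of $\EuScript U$, an element of $\EuScript U$ realizing the appropriate approximate type, and I let $A_{k+1}$ be $A_k$ together with all these chosen witnesses. Since we add only countably many elements at each of countably many stages, $M = \bigcup_{k<\omega} A_k$ is countable and contains $A$.

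I would then verify that $M$ satisfies condition~\ref{prop_Tarski_Vaught-3} of Proposition~\ref{prop_Tarski_Vaught}. Suppose $\varphi(x) \in \FF(M)$ has no free variables of sort $\sf S$ and $\exists x\, \neg\varphi(x)$ holds. Since $\varphi$ uses only finitely many parameters, these lie in some $A_k$, so $\varphi \in \FF(A_k)$. By Proposition~\ref{prop_LHapprox2} there is a consistent $\psi(x) \in \HH(A_k)$ and $\psi' > \psi$ with $\psi'(x) \to \neg\varphi(x)$; by separability applied to the approximations $\psi' > \psi$, I can interpose a formula from $\FF_0$, so that a witness for the corresponding approximate type was chosen at stage $k+1$. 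This gives some $a \in A_{k+1} \subseteq M$ with $\psi'(a)$, whence $\neg\varphi(a)$, as required. By Proposition~\ref{prop_Tarski_Vaught}, $M$ is then the domain of a model.

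The main obstacle, and the step that requires genuine care, is the bookkeeping in the closing-off construction: condition~\ref{prop_Tarski_Vaught-3} quantifies over \emph{all} $\FF(M)$-formulas, of which there are in general uncountably many even over a countable parameter set, since $\LL_{\sf S}$ may be large. The entire point of the separability hypothesis is to collapse these uncountably many witnessing requirements to countably many via $\FF_0$ and the approximation results (Propositions~\ref{prop_LHapprox2} and~\ref{prop_Tarski_Vaught}), and the delicate part is to confirm that choosing witnesses only for the countably many $\FF_0$-approximations genuinely suffices to satisfy the full Tarski--Vaught condition. I expect the rest (preservation of countability, the limit union, invoking saturation for the existence of witnesses) to be routine.
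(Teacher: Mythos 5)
Your proof is correct and follows essentially the same route as the paper: build $M$ as an increasing union of countable sets, at each stage adding a realization of every consistent formula in $\FF_0(A_k)$, and then invoke Proposition~\ref{prop_Tarski_Vaught} to conclude that $M$ is a model. The only (harmless) difference is that you verify clause~\ref{prop_Tarski_Vaught-3} by going through Proposition~\ref{prop_LHapprox2}, whereas the paper checks clause~\ref{prop_Tarski_Vaught-2} directly, which is slightly more economical because the reduction from~\ref{prop_Tarski_Vaught-3} to an $\HH$-witnessing task is exactly the implication \ref{prop_Tarski_Vaught-2}$\Rightarrow$\ref{prop_Tarski_Vaught-3} already established in Proposition~\ref{prop_Tarski_Vaught}.
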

 
 \begin{proof}
   As in the classical proof of the L\"owenheim-Skolem Theorem,
   construct a countable subset $M$ of ${\EuScript U}$ that contains \( A \) and a realization of every consistent formula in ${\FF}_0(M)$, where \( \FF_0 \) witnesses the separability of \( \LL \).
   By the equivalence \ref{prop_Tarski_Vaught-1}\( \iff \)\ref{prop_Tarski_Vaught-2} in Proposition~\ref{prop_Tarski_Vaught}, $M$ is a model.
 \end{proof}
 % \begin{fact}
 %   Then there is a countable set   countable language $L_{\sf Q}\subseteq L_{\sf S}$ such that 
 % \end{fact}

The following result provides a sufficient condition for the separability of ${\EuScript L}$.

\begin{proposition}
Let \( X \) be second-countable. If \( {\EuScript L} \setminus {\EuScript L}_{\sf S} \) is countable, then \( {\EuScript L} \) is separable.
\end{proposition}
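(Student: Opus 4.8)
The plan is to construct a countable $\FF_0 \subseteq \FF$ witnessing separability by combining two independent countability reductions, one for the closed sets occurring in $\FF$-atomic formulas and one for the terms. First I would record two preliminary facts. Since $X$ is second-countable and compact Hausdorff it is metrizable; fixing a compatible metric, for each $m \ge 1$ take the countable base of $X^m$ of balls centred at a countable dense set with rational radii, and let $\mathcal{D}_m$ be the countable family of all finite unions of closures of such basic balls. A routine compactness argument shows that $\mathcal{D}_m$ is \emph{cofinal for closed neighbourhoods}: for every closed $C \subseteq X^m$ and every open $U \supseteq C$ there is $D \in \mathcal{D}_m$ with $C \subseteq \mathrm{int}(D) \subseteq D \subseteq U$ (cover $C$ by basic balls whose closures lie in $U$, extract a finite subcover, and take the union of the closures). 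Second, the hypothesis that $\LL \setminus \LL_{\sf S}$ is countable gives countably many home-sort terms, countably many $\LL_{\sf H}$-atomic formulas, and hence countably many \emph{mixed subterms} of sort ${\sf S}$, that is, terms $g(s_1,\dots,s_k)$ with $g \colon {\sf H}^k \to {\sf S}$ a symbol of $\LL \setminus \LL_{\sf S}$ and the $s_i$ home-sort terms.

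The crucial step is a normal form for $\FF$-atomic formulas that isolates the countable data. Any term of sort ${\sf S}$ can be written as $F(\eta, u(x))$, where $F \colon X^{|\eta|+p} \to X$ is the continuous function obtained by composing the space-sort function symbols and constants that occur above its mixed leaves, and $u(x) = (u_1(x),\dots,u_p(x))$ is the tuple of its mixed subterms. Applying this componentwise to a tuple $t(x\,;\eta)$ and letting $\bar{F}$ be the resulting continuous map $X^{|\eta|+p} \to X^n$, the $\FF$-atomic formula $t(x\,;\eta) \in C$ is equivalent to $(\eta, u(x)) \in \bar{F}^{-1}(C)$, a formula whose term is a tuple of space-sort variables and mixed subterms only, drawn from a countable pool, and whose closed set is $\bar{F}^{-1}(C)$.

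Accordingly I would define $\FF_0$ as the closure under $\wedge$, $\vee$, and the quantifiers of both sorts of the countable collection consisting of all $\LL_{\sf H}$-atomic formulas together with all atomic formulas $(\eta, u(x)) \in D$ with $u$ a tuple of mixed subterms and $D \in \mathcal{D}_m$; this set is countable and contained in $\FF$. The sandwiching property is then proved by induction on $\varphi \in \FF$. For $\LL_{\sf H}$-atomic $\varphi$ the approximation is trivial and $\varphi \in \FF_0$. For $\varphi = (t \in C)$ with $\varphi' = (t \in C') > \varphi$, I rewrite both as $(\eta,u(x)) \in \bar{F}^{-1}(C)$ and $(\eta,u(x)) \in \bar{F}^{-1}(C')$; since $C \subseteq \mathrm{int}(C')$ and $\bar{F}$ is continuous, $\bar{F}^{-1}(C')$ is a closed neighbourhood of $\bar{F}^{-1}(C)$, so cofinality of $\mathcal{D}_m$ yields $D$ with $\bar{F}^{-1}(C) \subseteq \mathrm{int}(D) \subseteq D \subseteq \bar{F}^{-1}(C')$, and $\varphi_0 := \big((\eta,u(x)) \in D\big) \in \FF_0$ satisfies $\varphi \to \varphi_0 \to \varphi'$. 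The inductive steps are immediate and rely on the absence of negation in $\FF$: since an approximation of a compound formula is obtained by approximating its $\FF$-atomic constituents while leaving the connectives and quantifiers unchanged, and since these connectives and quantifiers are monotone, replacing each atomic piece by its $\varphi_0$-version preserves both implications (e.g. for $\varphi = \exists y\,\psi$ one takes $\varphi_0 = \exists y\,\psi_0$ for the $\psi_0$ furnished by the inductive hypothesis).

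The main obstacle I anticipate is precisely the uncountability of $\LL_{\sf S}$, which contains continuum-many closed sets and continuous function symbols and so a priori blocks any countable $\FF_0$. The normal-form decomposition is what defuses this: it absorbs all space-sort function symbols and constants into a single continuous $F$ and then, via the preimage $\bar{F}^{-1}(C)$, into the closed-set slot, where second-countability lets the countable family $\mathcal{D}_m$ interpolate, leaving only the countably many mixed subterms as genuine term data. The two routine-but-essential topological points to verify are that $\bar{F}^{-1}$ carries closed neighbourhoods to closed neighbourhoods and that the metric base can be chosen so that each basic ball has its closure inside any prescribed open neighbourhood.
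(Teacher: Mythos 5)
Your proof is correct, and its central step is genuinely different from the paper's. Both arguments construct the same countable family \( \mathcal{D}_m \) of closed neighbourhoods (closures of finite unions of basic open sets) and run the same monotone induction on the complexity of \( \varphi \); the divergence is in how the continuum-many space-sort function symbols are tamed. The paper approximates them directly: using metrizability of \( X \), the separability of \( C(X^k,X) \) as a Polish space, and uniform continuity, it perturbs every term by less than \( \varepsilon \) into one whose \( {\sf S} \)-sort function symbols come from a fixed countable dense set, and then reconciles this perturbation with the closed-set approximation via \( \varepsilon \)-enlargements \( C_\varepsilon \) (its Claims 1 and 2). You instead factor every \( {\sf S} \)-sort term as \( F(\eta,u(x)) \), with \( F \) the composite of all its \( \LL_{\sf S} \)-symbols and \( u \) the tuple of mixed subterms, and push \( F \) into the closed-set slot via \( \bar F^{-1}(C) \); since preimages under continuous maps send closed neighbourhoods to closed neighbourhoods, only the closed set needs approximating and the countably many mixed subterms are kept verbatim. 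This eliminates the function-approximation machinery and the \( \varepsilon \)-\( \delta \) bookkeeping altogether; it also shows that metrizability is not really needed (second countability plus normality already yield \( \mathcal{D}_m \), exactly as in the paper's construction), whereas the paper's route genuinely uses the metric. The price is mild: your replacement atomic formulas live in arity \( |\eta|+p \) rather than \( n \), and the degenerate case of an atomic \( t\in C \) in which \( t \) contains no variables of either sort produces an empty tuple \( (\eta,u(x)) \); but such a formula is a sentence whose truth depends only on \( X \), so it can be replaced by a fixed true or false \( \LL_{\sf H} \)-sentence of \( \FF_0 \). Neither point affects correctness.
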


\begin{proof}
For each \( n \geq 1 \) fix a countable basis \( {\EuScript B}_n \) for \( X^n \), and let \( {\EuScript D}_n \) be the collection of those sets which are the closure of a finite union of elements of \( {\EuScript B}_n \). Set also \( {\EuScript D} = \bigcup_{n \geq 1 } {\EuScript D}_n \). For every closed \( C \subseteq X^n \) and every open set \( U \supseteq C \) there is a closed neighborhood \( D \in {\EuScript D}_n \) of \( C \) such that \( D \subseteq U \). Indeed, by normality of \( X \) there is a closed neighborhood \( E \) of \( C \) such that \( E \subseteq U \). By compactness of \( C \), there is a finite subfamily of \( {\EuScript B}_n \) whose union \( V \) satisfies \( C \subseteq V \subseteq E \). The closure \( D \) of \( V \) is as desired. 

Since \( X \) is Hausdorff, second-countable and regular, it is metrizable by the Urysohn Metrization Theorem, and hence completely metrizable because of compactness. Thus for $k\ge1$ the space \( C(X^k,X) \) of (uniformly) continuous functions \( f \colon X^k \to X \) is a completely metrizable separable (i.e.\ Polish) space when equipped with the topology generated by the neighborhoods of \( f \in C(X^k,X) \) given by
\[ 
\{g \in C(X^k,X) \mid \forall x \in X^k \, d(f(x),g(x)) < \varepsilon \},
 \] 
where \( \varepsilon \) varies over positive reals and \( d \) is any fixed (complete) metric on \( X \) compatible with its topology. 
(See e.g.\ \cite[Theorem 4.19]{Kec}.) 
Let \( {\EuScript G}_k = \{ f^{(k)}_n \mid n \in \omega \} \) be dense in \( C(X^k,X) \) and let ${\EuScript G}=\bigcup_{k\ge 1}{\EuScript G}_k$.

Let \( {\EuScript F}_0 \subseteq {\EuScript H} \) be the set of \( {\EuScript F} \)-formulas \( \varphi \) such that:
\begin{enumerate}
\item \label{separabilityterm}
if \( t \) is a term occurring in \( \varphi \), then
all functions of sort \( {\sf S}^k \to {\sf S} \) appearing in \( t \) belong to \( {\EuScript G} \);
\item
if an atomic formula \( t \in D \) occurs in \( \varphi \), then \( D \in {\EuScript D} \).
\end{enumerate}

We claim that \( {\EuScript F}_0 \) witnesses that \( {\EuScript L} \) is separable.
Clearly, \( {\EuScript F}_0 \) is countable because so are \( {\EuScript G} \), \( {\EuScript D} \), and \( {\EuScript L} \setminus {\EuScript L}_{\sf S} \). 

\begin{claim} \label{claim1}
For every term \( t(x\, ;\eta) \) of sort \( {\sf H}^{|x|} \times {\sf S}^{|\eta|} \to {\sf S} \) and every positive real \( \varepsilon \) there is a term \( t'(x\, ; \eta) \) of the same sort satisfying~\eqref{separabilityterm} and such that \( d(t(a\, ;\alpha),t'(a\, ;\alpha)) < \varepsilon \) for any standard structure \( M \),  \( a \in M^{|x|} \), and \( \alpha \in X^{|\eta|} \).
\end{claim}

\begin{proof}[of the claim]
By induction on the complexity of \( t \). The only nontrivial case is when \( t \) is of the form \( f(t_1, \dotsc, t_n) \) with \( f \) of sort \( {\sf S}^n \to {\sf S} \), so that each \( t_i \) is of sort \( {\sf H}^{|x|} \times {\sf S}^{|\eta|} \to {\sf S} \). Since \( f \) is uniformly continuous, there is \( \delta \) such that for all \( \alpha, \beta \in X^n \), if \( d_n(\alpha,\beta) < \delta \), where \( d_n \) denotes the sup metric on the product \( X^n  \), then \( d(f(\alpha),f(\beta)) < \varepsilon/2 \). By inductive hypothesis, for each \( t_i \) choose \( t'_i \) satisfying~\eqref{separabilityterm}
and such that \( d(t_i(a\, ;\alpha),t'_i(a\, ;\alpha)) < \delta \) for any standard structure \( M \), \( a \in M^{|x|} \), and \( \alpha \in X^{|\eta|} \). Let also \( f' \in {\EuScript G} \) be of sort \( {\sf S}^n \to {\sf S} \) and such that \( d(f(\alpha),f'(\alpha)) < \varepsilon/2 \) for all \( \alpha \in X^n \). Finally, let \( t'' \) be the term \( f(t'_1, \dotsc, t'_n) \) and \( t' \) be the term \( f'(t'_1, \dotsc, t'_n ) \). Clearly, \( t' \) satisfies~\eqref{separabilityterm}. Let \( M \) be a standard structure, and fix any \( a \in M^{|x|} \) and \( \alpha \in X^{|\eta|} \). Then by the choice of the \( t'_i \) and of \( \delta \), we have \( d(t(a\, ;\alpha),t''(a\, ;\alpha)) < \varepsilon/2 \). On the other hand, by the choice of \( f' \) we also have \( d(t''(a\, ;\alpha),t'(a\, ;\alpha)) < \varepsilon/2 \). Thus \( d(t(a\, ;\alpha),t'(a\, ; \alpha)) < \varepsilon \) by the triangle inequality, and we are done.
\end{proof}

Given a closed set \( C \subseteq X^n \) and a positive real \( \varepsilon \), the \( \varepsilon \)-enlargement \( C_\varepsilon \) of \( C \) is the open set constituted by the union of all the open \( d_n \)-balls with radius \( \varepsilon \) and center in \( C \).

\begin{claim}\label{claim2}
For every closed set \( C \subseteq X^n \) and every closed neighborhood \( D \) of \( C \) there is some positive real \( \varepsilon \) such that \( C_\varepsilon \subseteq D \).
\end{claim}

\begin{proof}[of the claim]
Suppose towards a contradiction that for every \( m \in \omega \) there is \( \alpha_m \in C \) and \( \beta_m \in X^n \setminus D \) such that \( d_n(\alpha_m, \beta_m) < 1/m \). Since \( C \) is compact, by passing to a subsequence we can assume that the points \( \alpha_m \) converge to some \( \alpha \in C \). But then there is no open neighborhood of \( \alpha \) contained in \( D \), contradicting the fact that \( D \) is a closed neighborhood of \( C \). Indeed, for every positive real \( \delta \) one can pick \( m \) large enough so that \( d_n(\alpha_m,\alpha) < \delta/2 \) and \( 1/m < \delta/2 \). Then \( d_n(\beta_m,\alpha) \leq d_n(\beta_m,\alpha_m)+d_n(\alpha_m,\alpha) < \delta \), and so the ball centered in \( \alpha \) with radius \( \delta \) is not contained in \( D \).
\end{proof}

Fix now arbitrary formulas \( \varphi \in {\EuScript F} \) and \( \varphi' > \varphi \). By induction on the complexity of \( \varphi \), we show that there is \( \varphi_0 \in {\EuScript F}_0 \) such that \( \varphi \to \varphi_0 \to \varphi' \).
% holds in every standard structure \( M \).

If \( \varphi \) is an \( {\EuScript L}_{\sf H} \)-formula, then we can let \( \varphi_0 \) be \( \varphi \) itself. Assume now that \( \varphi \) is of the form \( t(x\, ;\eta) \in C \), for \( C \subseteq X^n \) closed and \( t(x \, ; \eta) \) an \( n \)-tuple of terms \( t_i(x\, ; \eta) \) each of sort \( {\sf H}^{|x|} \times {\sf S}^{|\eta|} \to {\sf S} \). The approximation \( \varphi' \) is then of the form \( t(x\, ;\eta) \in D \), for some closed neighborhood \( D \) of \( C \). By Claim~\ref{claim2} there is a positive real \( \varepsilon_0 \) such that \( C_{2\varepsilon_0} \subseteq D \). Pick a closed neighborhood \( D' \in {\EuScript D} \) of \( C \) such that \( D' \subseteq C_{\varepsilon_0} \). Applying Claim~\ref{claim2} again, let \( \varepsilon_1 \) be such that \( C_{\varepsilon_1} \subseteq D' \), and set \( \varepsilon = \min \{ \varepsilon_0, \varepsilon_1 \} \). Finally, pick terms \( t'_i(x\, ;\eta) \) as in Claim~\ref{claim1} applied to \( t_i (x \, ; \eta) \) and the chosen \( \varepsilon \).  Let \( \varphi_0 \in {\EuScript F}_0 \) be the formula \( t'(x\, ;\eta) \in D' \), where \( t'(x\, ; \eta) \) is the the \( n \)-tuple given by the terms \( t'_i(x\, ; \eta) \). Fix a standard structure \( M \) and pick arbitrary \( a \in M^{|x|} \) and \( \alpha \in X^{|\eta|} \). If \( t(a\, ;\alpha) \in C \), then \( t'(a\, ; \alpha) \in C_\varepsilon \subseteq D' \) by choice of \( t' \) and \( \varepsilon \leq \varepsilon_1 \): this proves \( \varphi \to \varphi_0 \). 
Similarly, if \( t'(a\, ;\alpha) \in D' \), then there is \( \beta \in C \) such that \( d_n(t'(a\, ;\alpha),\beta) < \varepsilon_0 \) by \( D' \subseteq C_{\varepsilon_0} \), and thus by choice of \( t' \) and \( \varepsilon\leq \varepsilon_0 \) we also have \( d_n(t(a\, ; \alpha),\beta) \leq d_n(t(a\, ;\alpha),t'(a\, ;\alpha)) + d_n(t'(a\, ;\alpha),\beta) < 2 \varepsilon_0 \), so that \( t(a\, ; \alpha) \in C_{2 \varepsilon_0} \subseteq D \) and we are done.

The inductive steps for \( \wedge \), \( \vee \), \( \exists^{\sf H} \), \( \forall^{\sf H} \), $\existsS$, and $\forallS$ are easy.
\end{proof}

\subsection{Cauchy complete models}\label{Cauchy}

In this section we introduce
abstract model-theoretic versions of some notions that arise naturally in the realm of (pseudo)metric spaces. Our choice of terminology will be justified in Section~\ref{subsec:metric}.

Let $t(x,z)$ be a parameter-free term of sort ${\sf H}^{|x|+|z|}\to {\sf S}$  with \( |z| \) finite, and let $y$ be a tuple of the same length as $x$. We write $x\sim_t y$ as an abbreviation for the formula  $\forall z\,\ \langle t(x,z),t(y,z)\rangle \in \Delta$, where $\Delta$ is the diagonal in $ X\times  X$. By an abuse of notation, we also write $\forall z\, \ t(x,z)=t(y,z)$.
 We let $x\sim_{\sf S} y$  be the type 
 \[
 \left\{x\sim_t y \MID t(x,z)\text{ is a term of sort  }{\sf H}^{|x|+|z|}\to {\sf S} \text{ and } \vert z\vert \in \omega\right\}.
 \]
% The type \( x \sim_{\sf S} y \) defines an equivalence relation on \( \EuScript{U}^{|x|} \). %Notice also that since \( \LL_{\sf S}\) contains a name for each element of \( X \), if \( a \sim_{\sf S} b \) then for every parameter-free term \( t(x,z ) \in \LL \) with \( |z| \in \omega \), the pair formed by \( a,b \) realizes the following formula, still denoted by \( x \sim_t y \):
%% \[
%% \forall z  \, t(x,z ) = t(y,z).
%% \]
%% Thus \( x \sim_{\sf S} y \) is equivalent to the type 
% Moreover, \( \LL_{\sf S}\) contains a name for each element of \( X \), so \( x \sim_{\sf S} y \) is equivalent to the type
%  \[ 
%\left \{ x \sim_t y  \MID t(x,z) \text{ is an \( \LL \)-term with }  |z| \in \omega  \right \}.
% \]
% where the formula \( x \sim_t y \) is defined for \emph{any} parameter-free term \( t(x,z) \).

% We define the following equivalence relation on $U$

% \ceq{\hfill\emph{$a\sim_{\sf S}\!b$}}{=}{\forallH y\ \big[t(a,y)=t(b,y)\big]}\hfill 

% When two tuples of the same length $\bar a=\langle a_i:i<\lambda\rangle$ and $\bar b=\langle b_i:i<\lambda\rangle$ are such that $a_i\sim_{\sf S}\!b_i$ for every $i<\lambda$ we write \emph{$\bar a\sim_{\sf S}\bar b$.}

\begin{proposition}\label{fact_productUniformity} 
 For all natural numbers $n \geq 1$ and all $a=\langle a_i \rangle_{1 \leq i \leq n}$, $b=\langle b_i \rangle_{1 \leq i \leq n}$ in~$\EuScript{U}^n$ 
 \begin{equation}\tag{\( \sharp \)} \label{eq:sharp}
a \sim_{\sf S} b \quad \Leftrightarrow \quad a_i \sim_{\sf S} b_i \text{\ \ \ \ \  for all } 1 \leq i \leq n. 
 \end{equation}
%  \ceq{\hfill (\sharp)\hspace{1cm}a\sim_{\sf S} b}{\Leftrightarrow}{a_i\sim_{\sf S} b_i}\quad for all $i<n$.
%
%
%
%  \def\ceq#1#2#3{\parbox[t]{30ex}{$\displaystyle #1$}\parbox{4ex}{\hfil $#2$}{$\displaystyle #3$}}
  %
  \end{proposition}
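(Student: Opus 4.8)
The plan is to establish the two implications of~\eqref{eq:sharp} separately. The forward implication $(\Rightarrow)$ is essentially a projection argument, while the backward one $(\Leftarrow)$ is a telescoping argument that replaces the coordinates of $a$ by those of $b$ one at a time; I expect all the real work to sit in the bookkeeping of the latter.

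For $(\Rightarrow)$, I would assume $a \sim_{\sf S} b$, fix $1 \leq i \leq n$, and start from an arbitrary parameter-free term $s(w,v)$ of sort ${\sf H}^{1+|v|} \to {\sf S}$. Substituting $x_i$ for the single variable $w$ produces a parameter-free term $t(x_1, \dotsc, x_n, v)$ of sort ${\sf H}^{n+|v|} \to {\sf S}$ that depends only on $x_i$ among $x_1, \dotsc, x_n$. Applying $a \sim_{\sf S} b$ to $t$ gives $\EuScript{U} \models \forall v\, \langle t(a,v), t(b,v)\rangle \in \Delta$, which is literally $\EuScript{U} \models \forall v\, \langle s(a_i,v), s(b_i,v)\rangle \in \Delta$. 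Since $s$ is arbitrary, this yields $a_i \sim_{\sf S} b_i$.

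For $(\Leftarrow)$, I would assume $a_i \sim_{\sf S} b_i$ for all $i$ and fix a parameter-free term $t(x_1, \dotsc, x_n, z)$ of sort ${\sf H}^{n+|z|} \to {\sf S}$. Setting $c^{(k)} = \langle b_1, \dotsc, b_k, a_{k+1}, \dotsc, a_n\rangle$ for $0 \leq k \leq n$, so that $c^{(0)} = a$ and $c^{(n)} = b$, it suffices to prove $\EuScript{U} \models \forall z\, \langle t(c^{(k)}, z), t(c^{(k+1)}, z)\rangle \in \Delta$ for each $k < n$ and then chain these equalities by transitivity of equality in $X$. The tuples $c^{(k)}$ and $c^{(k+1)}$ differ only in the $(k+1)$-th coordinate, so the idea is to view $t$ as a term in the single variable $x_{k+1}$ by folding all the other home variables into the auxiliary tuple: I would let $s(w,v)$ be the term obtained from $t$ by renaming $x_{k+1}$ to $w$ and taking $v = \langle x_1, \dotsc, x_k, x_{k+2}, \dotsc, x_n, z\rangle$, a parameter-free term of sort ${\sf H}^{1+|v|} \to {\sf S}$. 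Then $a_{k+1} \sim_{\sf S} b_{k+1}$ applied to $s$ gives $\EuScript{U} \models \forall v\, \langle s(a_{k+1}, v), s(b_{k+1}, v)\rangle \in \Delta$, and instantiating the quantified tuple $v$ so that the slots for $x_1, \dotsc, x_k$ carry $b_1, \dotsc, b_k$, the slots for $x_{k+2}, \dotsc, x_n$ carry $a_{k+2}, \dotsc, a_n$, and the slot for $z$ ranges freely, produces exactly $\EuScript{U} \models \forall z\, \langle t(c^{(k)}, z), t(c^{(k+1)}, z)\rangle \in \Delta$.

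The point that I expect to require the most care --- and the conceptual heart of the argument --- is keeping the terms parameter-free throughout the swap. The elements $b_1, \dotsc, b_k, a_{k+2}, \dotsc, a_n$ must never be inserted into the terms as parameters, since the definition of $\sim_{\sf S}$ ranges only over parameter-free terms; instead they enter solely as instances of the universal quantifier $\forall v$, once the surplus home variables have been absorbed into $v$. It is exactly this feature of the definition of $\sim_{\sf S}$ --- quantifying over all parameter-free terms together with an arbitrary auxiliary home-sort tuple --- that legitimizes the coordinate-by-coordinate replacement and drives the whole proof.
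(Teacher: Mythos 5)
Your proof is correct and takes essentially the same approach as the paper: the paper proves the nontrivial direction by induction on $n$, swapping the first $n$ coordinates via the inductive hypothesis and then the last one, which is exactly your coordinate-by-coordinate telescoping argument in rolled-up form. In both cases the key point is the one you identify --- the inactive coordinates are absorbed into the universally quantified auxiliary home-sort tuple rather than entering the term as parameters, and the swaps are chained by transitivity of equality in $X$.
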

  
  \begin{proof}
  The left-to-right implication is straightforward.  For the converse implication, we proceed by induction on $n \geq 1$. The case \( n = 1 \) is obvious. For the inductive step, we assume that~\eqref{eq:sharp} holds for any two sequences of length up to $n$, and we prove the equivalence for an arbitrary pair of tuples $\langle a_i \rangle_{1 \leq i \leq n+1}$, $\langle b_i \rangle_{1 \leq i \leq n+1}$ satisfying $a_i\sim_{\sf S} b_i$ for all $1 \leq i\leq n+1$. 
The inductive hypothesis gives
\[
\forall y,z\;\; t\big(\langle a_i\rangle_{1 \leq i \leq n},y,z\big) = t\big(\langle b_i\rangle_{1 \leq i \leq n},y,z\big)
\]
%  \ceq{  (1) \ \hfill\forall y,z\ \ \ t(\langle a_i\rangle_{i<n},y,z)}{=}{t(\langle b_i\rangle_{i<n},y,z)}
for every term $t(x,y,z)$ with $|x|=n$, \( |y| = 1 \), and \( |z| \in \omega \). In particular
\begin{equation} \label{eq:~t1}
\forall z \;\; t\big(\langle a_i\rangle_{1 \leq i \leq n},a_{n+1},z\big) = t\big(\langle b_i\rangle_{1 \leq i \leq n},a_{n+1},z\big).
\end{equation}  
% \ceq{(2)\hfill\forall z\ \ \  t(\langle a_i\rangle_{i<n},a_n,z)}{=}{t(\langle b_i\rangle_{i<n},a_n,z).}
By assumption $a_n\sim_{\sf S} b_n$, so
\[
\forall x,z \;\; t\big(x,a_{n+1},z\big) = t\big(x, b_{n+1}, x\big),
\]
%  \ceq{\hfill\forall x,z\ \ \   t(x,a_n,z)}{=}{t(x,b_n,z).}
%
and in particular
\begin{equation} \label{eq:~t2}
\forall z \;\; t\big(\langle b_i \rangle_{1 \leq i \leq n}, a_{n+1},z\big) = t\big(\langle b_i \rangle_{1 \leq i \leq n}, b_{n+1},z\big).
\end{equation}
%  \ceq{(3)\hfill\forall z\ \ \ t(\langle b_i\rangle_{i<n},a_n,z)}{=}{t(\langle b_i\rangle_{i<n},b_n,z).}
Finally, from~\eqref{eq:~t1} and~\eqref{eq:~t2} we get
\[
\forall z \; t\big(\langle a_i \rangle_{1 \leq i \leq n+1},z\big) = t\big( \langle b_i \rangle_{1 \leq i \leq n+1},z\big). \qedhere
\]
%  \ceq{\hfill\forall z\ \ \ t(\langle a_i\rangle_{i\le n},z)}{=}{t(\langle a_i\rangle_{i\le n},z).}
\end{proof}

%{\color{red}\fbox{VEDI BOZZA ORIGINALE. NEL Fact 30, A COSA SERVE ANDARE FINO A, E OLTRE, $\omega$? }}

%{\color{red}\fbox{LE 5 RIGHE SUCCESSIVE DICONO COMUNQUE COME FARLO. CONTROLLARE!}}

% We have defined $\sim_{\sf S}$ as a relation between finite tuples. However, the definition extends  to infinite sequences of elements of $\EuScript{U}$. Let $\alpha$ be an ordinal and let $a=\langle a_\beta\rangle_{\beta<\alpha}$, $b=\langle b_\beta\rangle_{\beta<\alpha}$ be in $\EuScript{U}^\alpha.$   We stipulate that $a\sim_{\sf S} b$ if and only if for all  $n\in\mathbb N$ and all $\beta_1<\dots<\beta_n<\alpha$   we have that $\langle a_{\beta_i}\rangle_{i\le n}\sim_{\sf S} \langle b_{\beta_i}\rangle_{i\le n}$.
%Then Proposition~\ref{fact_productUniformity} extends to infinite sequences:  by induction on $\alpha$ one can prove that
%$$a\sim_{\sf S} b\ {\Leftrightarrow}\ {a_\beta\sim_{\sf S} b_\beta}\quad \mbox{\ for all\ } \beta<\alpha.$$

%The  successor case in the inductive proof is as in Proposition~\ref{fact_productUniformity}; the limit case is trivial.

From now on we assume \( |x| = |y| = n \) and \( |z| \in \omega \).}
%The formula $x\sim_t y$ can equivalently be rewritten as
%\[
%\forall z\, \langle t(x,z),t(y,z)\rangle\in\Delta,
%\]
%where $\Delta$ is the diagonal in $ X\times  X$. We stipulate that the type \( \sim_{\sf S} \) is rewritten accordingly. With this formulation,\todo{Luca suggerisce di definire direttamente $\sim$ come $\sim'$. A me non sembra bello (ma \`e una questione di gusti)} it makes sense to consider 
 The approximations \( x \sim'_{t} y \) of $x\sim_t y$ have the form
\[
\forall z\, \langle t(x,z),t(y,z)\rangle\in D
\]
for  a closed neighborhood $D$ of  $\Delta$, and will be denoted by \( x \sim_{t,D} y \). We let \( \sim'_{\sf S} \) be the approximation of the type \( \sim_{\sf S} \), namely, the set of formulas
\[
\left\{x\sim_{t,D}y \MID t(x,z)\text{ and } D\text{ are as above}\right\}.
\]

\begin{remark}\label{sim} 
For all tuples of variables \( x ,y \) of the same length 
\[
x\sim_{\sf S} y\quad\Leftrightarrow\quad x\sim'_{\sf S}y.
\]
This equivalence is an instantiation of Proposition~\ref{prop_approx}, and holds in every sufficiently saturated structure.
However, the following topological argument shows that, for the specific types under consideration, the equivalence holds even without any saturation assumptions.
The left-to-right implication is trivial. The converse implication can be proved by contraposition and is a consequence of the normality of $X$. For assume $\langle r,s\rangle\in  X^2\setminus\Delta$. Since  both $\{\langle r,s\rangle\}$ and $\Delta$ are closed sets, there are disjoint open neighborhoods $W_1$ and $W_2$ of $\{\langle r,s\rangle\}$ and $\Delta$, respectively. The complement of $W_1$ is a closed neighborhood of $\Delta$ that does not contain $\langle r,s\rangle$.
\end{remark}

We now show that the formulas \( x \sim_{t,D} y \) yield a subbase of a natural uniformity on \( \EuScript{U}^n \). For this, we first need to close the family of truth sets of such formulas under intersections.
To simplify the notation, if $t(x,z)=\langle t_1(x,z),\dotsc,t_k(x,z)\rangle$ is a tuple of terms and $D=\langle D_1,\dotsc,D_k\rangle$ is a tuple of closed neighborhoods of $\Delta$,  we denote by $x\sim_{t,D}y$  the conjunction  $\bigwedge_{1 \leq i \leq k} x\sim_{t_i,D_i}y$. 
This notation is well behaved with respect to concatenation: if \( t(x,z) \) is the concatenation of the tuple of terms \( t_1(x,z), t_2(x,z) \) and \( D \) is the concatenation of the tuples of closed neighborhoods \( D_1,D_2 \) of $\Delta$ (of the appropriate length), then $x\sim_{t_1,D_1}y\wedge x\sim_{t_2,D_2}y\leftrightarrow x\sim_{t,D}y$.
 As $k$ ranges over the positive natural numbers and $t$ and $D$ range over  the $k$-tuples of terms and the $k$-tuples of  closed neighborhoods of $\Delta$, respectively,   the formulas $x\sim_{t,D}y$ define a family  $\EuScript B_n$ of subsets  of $\EuScript{U}^n\times \EuScript{U}^n$. 
 
 \begin{lemma} \label{lem:uniformity}
\( \EuScript{B}_n \) is a base for a uniformity \( \widehat{U}_n \) on \( \EuScript{U}^n \).
 \end{lemma}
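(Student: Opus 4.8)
The plan is to verify that $\EuScript{B}_n$ satisfies the standard axioms for being a base of a uniformity on $\EuScript{U}^n$: (i) every member of $\EuScript{B}_n$ contains the diagonal $\Delta_{\EuScript{U}^n}$; (ii) $\EuScript{B}_n$ is a filter base, i.e.\ for $B_1, B_2 \in \EuScript{B}_n$ some $B_3 \in \EuScript{B}_n$ satisfies $B_3 \subseteq B_1 \cap B_2$; (iii) for every $B \in \EuScript{B}_n$ some member of $\EuScript{B}_n$ is contained in $B^{-1}$; and (iv) for every $B \in \EuScript{B}_n$ some $B' \in \EuScript{B}_n$ satisfies $B' \circ B' \subseteq B$. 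Throughout I would identify a formula $x \sim_{t,D} y$ with the subset of $\EuScript{U}^n \times \EuScript{U}^n$ it defines. Axiom (i) is immediate, since each $D_i$ is a neighborhood of $\Delta$ and hence contains $\Delta$, so that $a \sim_{t,D} a$ always holds. Axiom (ii) has in effect already been recorded: by the concatenation remark preceding the statement, the conjunction of $x \sim_{t,D} y$ and $x \sim_{s,E} y$ is equivalent to $x \sim_{t',D'} y$, where $t'$ and $D'$ are the respective concatenations, so $\EuScript{B}_n$ is in fact closed under finite intersections.

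For axiom (iii) I would use the flip map $\sigma(\langle r,s\rangle) = \langle s,r\rangle$, a self-homeomorphism of $X \times X$ fixing $\Delta$. Hence the transpose $D_i^{-1} = \{\langle s,r\rangle : \langle r,s\rangle \in D_i\}$ of each closed neighborhood $D_i$ of $\Delta$ is again a closed neighborhood of $\Delta$. Unwinding the definitions then shows that $x \sim_{t,D^{-1}} y$ defines exactly $(x \sim_{t,D} y)^{-1}$, since $\langle t_i(a,z),t_i(b,z)\rangle \in D_i^{-1}$ iff $\langle t_i(b,z),t_i(a,z)\rangle \in D_i$; thus $\EuScript{B}_n$ is even closed under taking inverses.

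The crux is axiom (iv). Here I would invoke the fundamental fact that a compact Hausdorff space carries a unique compatible uniformity, whose entourages are precisely the neighborhoods of the diagonal and for which the \emph{closed} neighborhoods of $\Delta$ form a base. Consequently, for each closed neighborhood $D_i$ of $\Delta$ there is a closed neighborhood $E_i$ of $\Delta$ with $E_i \circ E_i \subseteq D_i$, where composition is taken in $X \times X$. Setting $E = \langle E_1, \dots, E_k\rangle$ and $B' = x \sim_{t,E} y$, the composition bound is then checked pointwise in the parameter $z$: if $a \sim_{t,E} b$ and $b \sim_{t,E} c$, then for every $i$ and every $z$ we have $\langle t_i(a,z),t_i(b,z)\rangle \in E_i$ and $\langle t_i(b,z),t_i(c,z)\rangle \in E_i$, whence $\langle t_i(a,z),t_i(c,z)\rangle \in E_i \circ E_i \subseteq D_i$ with common intermediate point $t_i(b,z)$. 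Therefore $a \sim_{t,D} c$, giving $B' \circ B' \subseteq B$. The verifications of (i)--(iv) together establish that $\EuScript{B}_n$ is a base for the uniformity $\widehat{U}_n$ it generates.

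The main obstacle is precisely axiom (iv), and within it the halving of a diagonal neighborhood. This is the one genuinely topological input, and it rests on the existence/uniqueness of the uniformity on a compact Hausdorff space (equivalently, on the nontrivial fact that neighborhoods of $\Delta$ can be square-rooted); a naive cover-and-subcover argument does not suffice because the two halves may be centered at different points. Once the halving is granted, the passage to the relations $\sim_{t,E}$ is purely formal: each such relation is an intersection, over the coordinate terms $t_i$ and universally over $z$, of preimages of diagonal neighborhoods under the maps $a \mapsto t_i(a,z)$, and relation composition distributes through these intersections in the favorable direction because a single intermediate point $b$ serves all coordinates and all values of $z$ at once.
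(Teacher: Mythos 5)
Your proposal is correct and follows essentially the same route as the paper: diagonal containment, closure under intersection via concatenation, closure under inverses via the flip, and the composition axiom reduced to halving closed neighborhoods of $\Delta$ in $X\times X$ (the paper cites divisibility of compact Hausdorff spaces plus normality, which is the same topological input as your appeal to the unique compatible uniformity), followed by the same pointwise check with a single intermediate point $t_i(b,z)$.
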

 
 \begin{proof}
 By definition, each element of $\EuScript B_n$ contains the diagonal of $\EuScript{U}^n\times \EuScript{U}^n$.  
  Furthermore, for all $X,Y\in\EuScript B_n$, we have that $X\cap Y$ belongs to  $\EuScript B_n$ by the observation  above on concatenation.
Also, for each $X\in\EuScript B_n$ its inverse $X^{-1}$ belongs to $\EuScript B_n$, since for $E\subseteq  X\times  X$ we have that $E$ is a closed neighborhood of $\Delta$ if and only if so is $E^{-1}$; hence if $X$ is defined by $x\sim_{t,D}y$ then $X^{-1}$ is defined by $x\sim_{t,D^{-1}}y$, where $D^{-1}$ is the tuple obtained by replacing each entry $D_i$ of $D$ with $D_i^{-1}$.

In order to conclude that $\EuScript B_n$ is a base for a uniformity ${\widehat U}_n$ on $\EuScript{U}^n$,  it remains to prove  that,  for each $V\in\EuScript B_n$,  
there exists $W\in\EuScript B_n$ such that $W\circ W\subseteq V$. 
Assume first that  $V$ is defined by $x\sim_{t,D}y$, for a single term \( t \)  and a closed neighborhood $D$ of $\Delta$. Let \( O \) be an open neighborhood of \( \Delta \) such that \( \Delta \subseteq O \subseteq D \). Since $ X$ is compact Hausdorff, it is also divisible, i.e.\ there is a (symmetric) open neighborhood $E$ of $\Delta$ such that  $E \circ E \subseteq O \subseteq D$. By normality of $ X$, there exists a closed neighborhood $F$ of $\Delta$ such that $\Delta\subseteq F\subseteq E$, so that still  $F \circ F \subseteq D$.
It follows that the set $W$ defined by $x\sim_{t,F}y$ is such that $ W\circ W\subseteq V$.

For the general case, let $V$ be the set defined by $x\sim_{t, D}y$, where $t=\langle t_1,\dotsc,t_k\rangle$ is a tuple of terms and $D=\langle D_1,\dotsc,D_k\rangle$ is a tuple of neighborhoods of $\Delta$. Note that $V=\bigcap_{1 \leq i \leq k} V_i$, where each $V_i$ is the set defined by $x\sim_{t_i, D_i} y$. 
By the previous paragraph, for each $1\le i\le k$ we get  a set $W_i$ defined by $x\sim_{t_i, F_i} y$ such that $W_i\circ W_i\subseteq V_i$. Let \( F = \langle F_1, \dotsc, F_k \rangle \). Then the set  $W=W_1\cap\ldots\cap W_k$ defined by $ x\sim_{t, F} y$ is such that $W\circ W\subseteq V$.
%
% By  repeating the previous argument on each entry of the two tuples $t, D$, we get for each $1\le i\le k$ a set $W_i$ defined by $x\sim_{t_i, F_i} y$ such that $W_i\circ W_i\subseteq V_i$. The set  $W=W_1\cap\dots\cap W_k$ is defined by $\bigwedge_i x\sim_{t_i, F_i}y$ and has the property that $W\circ W\subseteq V$.
\end{proof}

The uniformity ${\widehat U}_n$ defines a topology on $\EuScript{U}^n$ which we call the \emph{${\sf S}$-topology}.
%We call \emph{${\sf S}$-topology} the   topology on $U^n$ which is defined by the uniformity ${\widehat U}_n$.
Though not needed in what follows, it is worth mentioning that the ${\sf S}$-topology on $\EuScript{U}^{n}$ coincides with the  $n$-th product of the ${\sf S}$-topology on $\EuScript{U}$. This is because, by Proposition~\ref{fact_productUniformity}, the $n$-fold product uniformity of ${\widehat U}_1$ is exactly ${\widehat U}_n$, and it is a known result that the same property is inherited by the associated topologies.   
 
\begin{definition} \label{def:S-invariant}
We say that a type $p(x)\subseteq{\FF}(\EuScript{U})$ is \emph{$\sim_{\sf S}$-invariant}
 if the set defined by \( p(x) \) is invariant with respect to the equivalence relation defined by \( x \sim_{\sf S} y \), i.e.\ if it is true that $x\sim_{\sf S} y\rightarrow(p(x)\leftrightarrow p(y))$. A formula \( \varphi(x) \) is \( \sim_{\sf S} \)-invariant if so is the type \( \{ \varphi(x) \} \).
\end{definition}

By the remark preceding Proposition~\ref{fact_productUniformity},
it is easy to see that all formulas %{\color{red}{constructed without the use of (ii) of Definion~\ref{def_LL}} \fbox{MEGLIO: 
not containing any $\LL_{\sf H}$-formula as a subformula % ?}} 
are $\sim_{\sf S}$-invariant.

The following proposition can be 
regarded as a formulation of the Perturbation Lemma \cite[Proposition~5.15]{HI}. 

\begin{proposition}\label{prop_perturbation} 
For each formula $\varphi(x)\in{\FF}(\EuScript{U})$, the following are equivalent: 
\begin{enumerate-(1)} 
\item \label{prop_perturbation-1} 
$\varphi(x)$ is \emph{$\sim_{\sf S}$}-invariant;
\item \label{prop_perturbation-2}  
for every $\varphi'>\varphi$ there are tuples \( t \) and \( D \) such that
%a tuple $t(x)$ of terms and a tuple $D$ of closed neighborhoods of $\Delta$ such that
\[
{ {x \sim_{t,D} y} \wedge \varphi(x)}\ \to\ \varphi'(y).
\]
%    \ceq{\hfill x\sim_{t,D}\!y\ \wedge\ \varphi(x)}{\rightarrow}{\varphi'(y).}
%  
\end{enumerate-(1)}  
\end{proposition}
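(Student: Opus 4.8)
The plan is to prove the two implications separately, using the saturation-free equivalence of Remark~\ref{sim}, the approximation principle of Proposition~\ref{prop_approx}, and the compactness consequence in Proposition~\ref{fact_compactness_imp}.

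For \ref{prop_perturbation-2}~$\Rightarrow$~\ref{prop_perturbation-1}, I would first observe that $x\sim_{\sf S}y$ is symmetric, since each $x\sim_t y$ is symmetric ($\Delta$ is the diagonal), so it suffices to prove $x\sim_{\sf S}y\wedge\varphi(x)\to\varphi(y)$. Suppose $a\sim_{\sf S}b$ and $\varphi(a)$, and fix an arbitrary $\varphi'>\varphi$. By~\ref{prop_perturbation-2} there are tuples $t,D$ with $x\sim_{t,D}y\wedge\varphi(x)\to\varphi'(y)$. As $\Delta$ is contained in each closed neighborhood $D_i$, the hypothesis $a\sim_{\sf S}b$ yields $a\sim_{t,D}b$, and together with $\varphi(a)$ this gives $\varphi'(b)$. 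Since $\varphi'$ was arbitrary, $b$ satisfies every approximation of $\varphi$, i.e. $\EuScript{U}\models\{\varphi(b)\}'$; because the monster model is $\FF$-saturated, Proposition~\ref{prop_approx} gives $\varphi(b)$, as required.

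For \ref{prop_perturbation-1}~$\Rightarrow$~\ref{prop_perturbation-2}, fix $\varphi'>\varphi$ and let $A$ be the finite parameter set of $\varphi$; since the terms defining $\sim'_{\sf S}$ are parameter-free, the approximate type in the combined tuple $(x,y)$
\[
r(x,y)=\{x\sim_{t,D}y \MID t,D\text{ as in the definition of }\sim'_{\sf S}\}\cup\{\varphi(x)\}
\]
lies in $\FF(A)$. By Remark~\ref{sim} the relation defined by $\sim'_{\sf S}$ coincides with that defined by $\sim_{\sf S}$, so any realization $(a,b)$ of $r$ satisfies $a\sim_{\sf S}b$ and $\varphi(a)$; by $\sim_{\sf S}$-invariance this forces $\varphi(b)$, whence $r(x,y)\to\varphi(y)$. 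Now I would apply Proposition~\ref{fact_compactness_imp}\ref{fact_compactness_imp-2} to $r$, the formula $\varphi(y)$, and its approximation $\varphi'(y)$, obtaining a finite conjunction $\psi(x,y)$ of formulas in $r$ with $\psi(x,y)\to\varphi'(y)$. Enlarging $\psi$ by the conjunct $\varphi(x)$ if necessary (which only strengthens the antecedent), I may assume $\psi(x,y)=\varphi(x)\wedge\bigwedge_{i=1}^k(x\sim_{t_i,D_i}y)$; setting $t=\langle t_1,\dots,t_k\rangle$, $D=\langle D_1,\dots,D_k\rangle$ and using the concatenation convention, this reads $\varphi(x)\wedge x\sim_{t,D}y\to\varphi'(y)$, which is exactly~\ref{prop_perturbation-2}.

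The one delicate point is the passage, in \ref{prop_perturbation-1}~$\Rightarrow$~\ref{prop_perturbation-2}, from the exact diagonal to closed neighborhoods of $\Delta$. A naive argument starting from the exact type $\sim_{\sf S}$ would produce, via Proposition~\ref{fact_compactness_imp}\ref{fact_compactness_imp-2}, a finite conjunction of formulas $x\sim_{t_i}y$ built on $\Delta$, after which one would still have to enlarge each $\Delta$ to a suitable closed neighborhood $D_i$ without breaking the implication---a fiddly extra step. Working with $\sim'_{\sf S}$ from the outset, and invoking the saturation-free equivalence $\sim_{\sf S}\Leftrightarrow\sim'_{\sf S}$ of Remark~\ref{sim}, avoids this entirely: the neighborhoods $D_i$ are already present in the formulas of $r$, so the conjuncts extracted by compactness come equipped with them. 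The remaining bookkeeping (symmetry of $\sim_{\sf S}$ and the concatenation convention for tuples $t,D$) I expect to be routine.
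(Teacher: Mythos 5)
Your proposal is correct and follows essentially the same route as the paper: the forward direction combines the $\sim_{\sf S}\Leftrightarrow\sim'_{\sf S}$ equivalence of Remark~\ref{sim} with Proposition~\ref{fact_compactness_imp}\ref{fact_compactness_imp-2} applied to the approximate type (your $r(x,y)$ just makes explicit what the paper leaves implicit), and the converse direction uses symmetry, the trivial half of Remark~\ref{sim}, and Proposition~\ref{prop_approx} exactly as the paper does. The only addition worth noting is your explicit handling of the bookkeeping (adjoining $\varphi(x)$ to the finite conjunction and concatenating the $t_i,D_i$), which is harmless and matches the paper's conventions.
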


\begin{proof}
% RISCRITTA DA LUCA (Nov 2023)
%{\color{red}\ref{prop_perturbation-1} \( \Rightarrow \) \ref{prop_perturbation-2}.
%Let $\varphi'>\varphi.$ By Remark~\ref{sim} and  by Proposition~\ref{fact_compactness_imp}\,(2) there exist terms  $t_1,\dots, t_k$  and closed neighborhoods  $D_1,\dots,D_k$ of $\Delta$ such that $\bigwedge_{1\le i\le k}\  x\sim_{t_i,D_i}\!y\ \wedge\  \varphi(x) \rightarrow \varphi'(y)$.
%
%\ref{prop_perturbation-2} \( \Rightarrow \) \ref{prop_perturbation-1}.
% $(2) \Rightarrow (1).$   Assume (2) and $x\sim_{\sf S} y\land\varphi(x)$. For each  $\varphi'>\varphi$ let $t, D$ be as in (2). Note that $x\sim_u y\rightarrow x\sim_{u,D}y$, for all terms $u$ and all closed neighborhoods $D$ of $\Delta$. Then $\varphi'(y)$ holds. By the arbitrary choice of $\varphi'$ we get that $\{\varphi'(y): \varphi'>\varphi\}$ is satisfiable. Finally, by saturation of $\EuScript{U}$ (see Proposition~\ref{prop_approx}), we conclude that $\varphi(y)$ holds.}
 
 \ref{prop_perturbation-1} \( \Rightarrow \) \ref{prop_perturbation-2}.
Fix any $\varphi'>\varphi.$ By~\ref{prop_perturbation-1} and Remark~\ref{sim},
we obtain \( {{x \sim'_{\rm S} y} \wedge \varphi(x)} \, \to \, \varphi(y) \). Applying Proposition~\ref{fact_compactness_imp}\ref{fact_compactness_imp-2} gives~\ref{prop_perturbation-2}.

\ref{prop_perturbation-2} \( \Rightarrow \) \ref{prop_perturbation-1}.
Since \( {x \sim_{\sf S} y} \leftrightarrow {y \sim_{\sf S} x} \), it is enough to show that \( x \sim_{\sf S} y \to ( \varphi(x) \to \varphi(y)) \). Fix any \( a,b \in \EuScript{U}^{|x|} \) such that \( a \sim_{\sf S} b \) and \( \varphi(a) \). For each \( \varphi' > \varphi \), fix \( t \) and \(D \) as in~\ref{prop_perturbation-2}. Since \( x \sim_{t,D} y \) is a conjunction of formulas in \( x \sim'_{\sf S} y \), by Remark~\ref{sim} we have \( a \sim_{t,D} b \), and hence \( \varphi'(b) \) by choice of \( t \) and \( D \). By Proposition~\ref{prop_approx}, we conclude that $\varphi(b)$ holds.
\end{proof}

\begin{remark}\label{rem_fin-approx-sat} 
Let \( M \) be a model and let $q(x)\subseteq {\FF}(M)$ be a consistent type.  Unlike the classical setting, if \( M \) is not sufficiently saturated we cannot infer that $q(x)$ is finitely satisfiable in  $M$, but only that $q'(x)$ is finitely satisfiable in $M$. Indeed, we may assume that $q(x)$ is closed under conjunctions. Given $\psi(x) \in q(x)$ we can apply Proposition~\ref{fact_HImorphisms} to $\exists x\, \psi(x)$ and obtain that $\psi'(x)$ is satisfiable in $M$ for each $\psi' > \psi$, as desired.  If instead $M$ is $\omega$-$\FF$-saturated,  than we  also have  that each formula $\psi(x)\in q(x)$ is satisfiable in $M$. In fact, by the previous argument the set  $\{\psi\}'=\{\psi' \MID \psi'>\psi\}$ is finitely satisfiable in $M$, and it contains only finitely many parameters from $M$. Hence $\{\psi\}'$ is satisfiable in $M$ by \( \omega \)-\( \FF \)-saturation and, by Proposition~\ref{prop_approx}, so is  $\psi(x)$. The same proof shows that if a satisfiable type $q(x)$ contains only finitely many parameters from $M$, then it is realized in $M$.
\end{remark}  

\begin{definition} \label{def:cauchy}
Let \( A \subseteq \EuScript{U} \). 
We say that $p(x)\subseteq{\FF}(A)$ is a \emph{Cauchy type} if it is consistent and $p(x)\wedge p(y) \, \rightarrow \, x\sim_{\sf S} y$. 
\end{definition}

%{\color{red}\fbox{CONFRONTARE ENUNCIATO LM 33 CON ENUNCIATO  ORIGINALE}}
\begin{lemma}\label{lem_Cauchyequiv} 
For every model $M$ and every type 
  $p(x)\subseteq{\FF}(M)$, the following are equivalent:
\begin{enumerate-(1)}
\item \label{lem_Cauchyequiv-1} 
$p(x)$ is a $\sim_{\sf S}$-invariant Cauchy type;
 \item \label{lem_Cauchyequiv-2} 
\( p(x) \) is consistent, and $p(x)\leftrightarrow x \sim_{\sf S} a$
 for each $a\in p(\EuScript{U})$. 
 \end{enumerate-(1)}
\end{lemma}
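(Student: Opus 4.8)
The plan is to prove the two implications separately, using the symmetry and transitivity properties of $\sim_{\sf S}$ that are implicit in its definition as a type of diagonal-membership conditions. First I would record the basic algebra of $\sim_{\sf S}$: reflexivity and symmetry are immediate from the symmetry of $\Delta$ (already noted in the proof of Lemma~\ref{lem:uniformity} via $D^{-1}$), and transitivity follows since $\Delta\circ\Delta=\Delta$. Together with Proposition~\ref{fact_productUniformity}, these make $\sim_{\sf S}$ behave like an equivalence relation on $\EuScript{U}^n$.

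For the direction \ref{lem_Cauchyequiv-2}~$\Rightarrow$~\ref{lem_Cauchyequiv-1}, suppose $p(x)$ is consistent and $p(x)\leftrightarrow x\sim_{\sf S}a$ for each $a\in p(\EuScript{U})$. I would first check that $p$ is a Cauchy type: if $p(b)$ and $p(c)$ hold, then picking any realization $a\in p(\EuScript{U})$ we get $b\sim_{\sf S}a$ and $c\sim_{\sf S}a$, whence $b\sim_{\sf S}c$ by symmetry and transitivity, so $p(x)\wedge p(y)\to x\sim_{\sf S}y$. For $\sim_{\sf S}$-invariance, suppose $b\sim_{\sf S}c$ and $p(b)$; fixing $a\in p(\EuScript{U})$ gives $b\sim_{\sf S}a$, hence $c\sim_{\sf S}a$ by transitivity, and so $p(c)$ by the hypothesis $p\leftrightarrow x\sim_{\sf S}a$. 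This establishes \ref{lem_Cauchyequiv-1}.

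For the converse \ref{lem_Cauchyequiv-1}~$\Rightarrow$~\ref{lem_Cauchyequiv-2}, assume $p(x)$ is a $\sim_{\sf S}$-invariant Cauchy type and fix $a\in p(\EuScript{U})$ (which exists by consistency). One inclusion is forced by the Cauchy condition: if $p(b)$ holds then $p(a)\wedge p(b)$ gives $b\sim_{\sf S}a$, so $p(x)\to x\sim_{\sf S}a$. The reverse inclusion $x\sim_{\sf S}a\to p(x)$ is exactly where $\sim_{\sf S}$-invariance is used: if $b\sim_{\sf S}a$, then since $p(a)$ holds, invariance of $p$ (applied in the form $a\sim_{\sf S}b\to(p(a)\leftrightarrow p(b))$, using symmetry) yields $p(b)$. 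Combining the two inclusions gives $p(x)\leftrightarrow x\sim_{\sf S}a$ for each $a\in p(\EuScript{U})$, as required.

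I expect the only subtle point to be the justification that $\sim_{\sf S}$ is genuinely transitive as a relation defined by an infinite type, rather than just a single diagonal condition. The cleanest route is to argue termwise: for a fixed term $t$, the relation $x\sim_t y$ (i.e.\ $\forall z\,\langle t(x,z),t(y,z)\rangle\in\Delta$) is transitive because $\Delta$ is a genuine equality relation on $X$, and $\sim_{\sf S}$ is the conjunction of these over all $t$. The remaining steps are purely formal manipulations of the membership conditions and should be routine once this algebraic groundwork is in place.
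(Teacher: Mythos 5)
Your proposal is correct and follows essentially the same route as the paper: both directions are obtained by substituting a fixed realization $a$ into the Cauchy condition to get $p(x)\to x\sim_{\sf S}a$, using $\sim_{\sf S}$-invariance together with $p(a)$ for the converse, and invoking the fact that $\sim_{\sf S}$ defines an equivalence relation for the implication \ref{lem_Cauchyequiv-2}~$\Rightarrow$~\ref{lem_Cauchyequiv-1}. The only difference is that you spell out why $\sim_{\sf S}$ is transitive (termwise, via $\Delta$), a point the paper simply asserts.
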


\begin{proof}
 \ref{lem_Cauchyequiv-1} \( \Rightarrow \) \ref{lem_Cauchyequiv-2}. 
 Since \( p(x) \) is a Cauchy type, it is in particular consistent. 
Fix any $a \in p(\EuScript{U})$.
Substituting $a$ for $y$ in the definition of Cauchy type gives $p(x)\rightarrow x \sim_{\sf S} a$. The converse implication follows from \( p(a) \) and the $\sim_{\sf S}$-invariance of $p(x)$. 
  
\ref{lem_Cauchyequiv-2} \( \Rightarrow \) \ref{lem_Cauchyequiv-1}.
Fix any $a \in p(\EuScript{U})$. From  $p(x) \wedge p(y)$, we can derive  $x \sim_{\sf S} a \wedge y\sim_{\sf S} a$ by~\ref{lem_Cauchyequiv-2}. Since the type \( z  \sim_{\sf S} w \) defines an equivalence relation, we conclude that $x\sim_{\sf S} y$. This shows that $p(x)$ is Cauchy. To show that it is also $\sim_{\sf S}$-invariant, assume that $x\sim_{\sf S}y$  and $p(x)$. From~\ref{lem_Cauchyequiv-2}, we get $x\sim_{\sf S} y \wedge x \sim_{\sf S} a$. Hence $y\sim_{\sf S} a$, and so $p(y)$ by~\ref{lem_Cauchyequiv-2} again.  
\end{proof}

\begin{definition} \label{def:Cauchycomplete}
We say that a set $A\subseteq \EuScript{U}$ is \emph{Cauchy complete} if every $\sim_{\sf S}$-invariant Cauchy  type $p(x)\subseteq{\FF}(A)$ is realized by some tuple in $A^{|x|}$.
\end{definition}
% % We say that ${\EuScript M}$ is \emph{existentially closed (e.c.)\/} if every consistent conjunction of ${\FF}$-atomic formulas is satsfied in ${\EuScript M}$.
% Note that if $p(x)$ is Cauchy type then any consistent type $q(x)$ containing $p(x)$ is also Cauchy.
% In particular the type $p(x)\ \cup\ {\FF}\mbox{-Th}({U}/M)$ is Cauchy.
% The following trivial consequence of this fact that is worth noticing
% \begin{remark}
%   Cauchy complete {\color{red}standard structures} are 
% strong {\color{red}standard structures}.
%   I.e., $M\models\varphi$ for every true $\varphi\in{\FF}(M)$.
% \end{remark}

\begin{proposition}\label{cauchy-compl-revisited}
  Let $\lambda>\vert{\LL}\vert$ and let $M$ be a  $\lambda$-${\FF}$-saturated model.
  Then $M$ is Cauchy complete.
\end{proposition}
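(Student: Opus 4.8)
The plan is to reduce the problem to a single application of $\lambda$-$\FF$-saturation; the whole difficulty is to keep the number of parameters below $\lambda$. Let $p(x)\subseteq\FF(M)$ be a $\sim_{\sf S}$-invariant Cauchy type. Being Cauchy, $p$ is consistent, so I may fix a realization $a\in p(\EuScript{U})$, and Lemma~\ref{lem_Cauchyequiv} then gives $p(x)\leftrightarrow x\sim_{\sf S}a$. Thus realizing $p$ in $M$ is the same as producing some $b\in M^{|x|}$ with $b\sim_{\sf S}a$. One cannot simply feed $p$ into the saturation hypothesis: although $p'$ is finitely satisfiable in $M$ by Remark~\ref{rem_fin-approx-sat}, the type $p$ may involve as many as $|M|\geq\lambda$ parameters. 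The key step is therefore to replace $p$ by an \emph{equivalent} type $q\subseteq\FF(M)$ carrying at most $|\LL|<\lambda$ parameters.

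To construct $q$, I would range over all terms $t(x,z)$ of sort ${\sf H}^{|x|+|z|}\to{\sf S}$ with $|z|\in\omega$ and all closed neighborhoods $D$ of the diagonal $\Delta$; there are at most $|\LL|$ of the former and at most $|\LL_{\sf S}|\leq|\LL|$ of the latter. Since $x\sim_t a$ is a member of $x\sim_{\sf S}a$, we have $p\to x\sim_t a$, and $x\sim_{t,D}a$ is an approximation of $x\sim_t a$; hence Proposition~\ref{fact_compactness_imp}\ref{fact_compactness_imp-2} (applied over the parameters $M\cup\{a\}$) furnishes a finite conjunction $\psi_{t,D}$ of formulas of $p$ with $\psi_{t,D}\to x\sim_{t,D}a$. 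Let $q$ be the closure under finite conjunctions of $\{\psi_{t,D}\}_{t,D}$. Each $\psi_{t,D}$ is an $\FF(M)$-formula with finitely many parameters, so $q\subseteq\FF(M)$ uses at most $|\LL|<\lambda$ parameters from $M$.

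Next I would verify $q\leftrightarrow p$. The implication $p\to q$ is clear, since each $\psi_{t,D}$ is a finite conjunction of members of $p$. For the converse, any $b$ realizing $q$ satisfies $\psi_{t,D}$, hence $b\sim_{t,D}a$, for all $t$ and $D$; that is, $b\models x\sim'_{\sf S}a$, and so $b\sim_{\sf S}a$ by Remark~\ref{sim}, whence $p(b)$ by Lemma~\ref{lem_Cauchyequiv}. In particular $q$ is consistent. Because $\lambda>|\LL|\geq\aleph_0$, the model $M$ is $\omega$-$\FF$-saturated, so by Remark~\ref{rem_fin-approx-sat} every formula of $q$ is satisfiable in $M$; since $q$ is closed under finite conjunctions, $q$ is finitely satisfiable in $M$. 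As $q$ has fewer than $\lambda$ parameters, $\lambda$-$\FF$-saturation yields a realization $b\in M^{|x|}$ of $q$, and $q\leftrightarrow p$ gives $p(b)$. Hence $p$ is realized in $M$, proving that $M$ is Cauchy complete.

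I expect the parameter reduction of the second paragraph to be the crux. A Cauchy type may genuinely require unboundedly many parameters to be written down, so the point of the hypothesis $\lambda>|\LL|$ is precisely that the $\sim_{\sf S}$-uniformity is generated by only $|\LL|$ basic entourages $x\sim_{t,D}y$; this is exactly what allows Proposition~\ref{fact_compactness_imp}\ref{fact_compactness_imp-2} to compress $p$ into the small, and hence realizable, type $q$.
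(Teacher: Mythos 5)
Your proposal is correct and follows essentially the same route as the paper's own proof: fix a realization $a$ of $p$, use Lemma~\ref{lem_Cauchyequiv} to identify $p$ with $x\sim_{\sf S}a$, compress via Proposition~\ref{fact_compactness_imp}\ref{fact_compactness_imp-2} into a subtype $q\subseteq p$ with at most $\vert\LL\vert<\lambda$ parameters, realize $q$ in $M$ by Remark~\ref{rem_fin-approx-sat} and $\lambda$-$\FF$-saturation, and recover $p$ from $b\sim'_{\sf S}a$ via Remark~\ref{sim} and $\sim_{\sf S}$-invariance. The only cosmetic difference is that you index the compressed formulas explicitly by pairs $(t,D)$ and phrase the conclusion as an equivalence $q\leftrightarrow p$, whereas the paper argues directly through $b\sim_{\sf S}a$; the content is identical.
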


\begin{proof}  
  Let $a\in \EuScript{U}^{\vert x\vert}$ be a realization of the  $\sim_{\sf S}$-invariant Cauchy type $p(x)\subseteq{\FF}(M)$.  We may assume  that $p(x)$ is closed under conjunction.  By Lemma~\ref{lem_Cauchyequiv} we have $p(x)\leftrightarrow x\sim_{\sf S} a$.
  By Proposition~\ref{fact_compactness_imp}\ref{fact_compactness_imp-2}, for every formula $\varphi(x)$ in the type $x \sim_{\sf S} a$ and every $\varphi'>\varphi$ we can pick $\psi(x)\in p(x)$ such that $\psi(x)\rightarrow\varphi'(x)$. %Let $\psi(x)$ be as above. 
 Let \( q(x) \subseteq p(x) \) be the type consisting of all such formulas \( \psi(x) \). By Remark~\ref{rem_fin-approx-sat} and saturation of \( M \), any finite conjunction of formulas in \( q(x) \) is satisfiable in \( M \) .
%By the last part of Remark~\ref{rem_fin-approx-sat}, such a $\psi(x)$ is satisfiable in $M$.  
 As there are at most $\vert{\LL}\vert$ formulas in $x \sim_{\sf S} a$, the type \( q(x) \) uses less than \( \lambda \) parameters from \( M \), hence
  by \( \lambda \)-\( \FF \)-saturation of $M$  the type $q(x)$ is realized in \( M \) by some $b \in M^{|x|}$. Then $b \sim_{\sf S}' a$ by definition of \( q(x) \), so $b\sim_{\sf S} a$ by Remark~\ref{sim}. By $\sf S$-invariance of \( p(x) \), we get that $b \in M^{|x|}$ is a realization of $p(x)$.  
\end{proof}

By Corollary~\ref{corol_omega_sat} and \( \lambda \)-\( \FF \)-saturation of \( M \),  we actually get that the element \( b \in M^{|x|} \) from the end of the above proof actually \emph{realizes \( p(x) \) in \( M \)} (and not only in \( \EuScript{U} \)).

Moreover, when  $\LL$ is separable it suffices to require $\lambda>\omega$ in the statement of Proposition~\ref{cauchy-compl-revisited}.

%%%%%%%%%%%%%%%%%%%%%%%%%%%%%%%%%%%
%%%%%%%%%%%%%%%%%%%%%%%%%%%%%%%%%%%
%%%%%%%%%%%%%%%%%%%%%%%%%%%%%%%%%%%
%%%%%%%%%%%%%%%%%%%%%%%%%%%%%%%%%%%
%%%%%%%%%%%%%%%%%%%%%%%%%%%%%%%%%%%
%%%%%%%%%%%%%%%%%%%%%%%%%%%%%%%%%%%
\subsection{Example: bounded metric spaces} \label{subsec:metric}

%{\color{blue}

 Let $ X=[0,1]$ be the real unit interval, equipped with the standard topology. 
Let $\langle M , d \rangle$ be a metric space with \( d \colon M^2 \to X \). For each $n>1$ we also denote by $d$ the supremum metric on $M^n$. 

% RISCRITTO DA LUCA (Nov 2023(
%{\color{red}Assume that all function symbols in $\LL_{\sf H}$ are interpreted as uniformly continuous functions. 
%Let ${\LL}$ contain a symbol for $d$ and possibly for some functions from $M^n$ to $ X$, with $0<n$, which are uniformly continuous with respect to the supremum metric on $M^n$.}

Let \( \LL \) be a language containing a symbol for \( d \), and possibly for some relations on \( M \) and some uniformly continuous functions from \( M^n \) to \( M \) or from \( M^n \) to \( X \).
Let $\langle M, X\rangle$ be the standard structure that interprets the symbols of \( \LL \) in the natural way.

For the rest of this section, we fix a monster model $M \preccurlyeq^{\FF} \EuScript{U}$ obtained from \( M \) as in the proof of Theorem~\ref{thm:monster}. % (in this regard recall also the definition of standard part of a structure given in Section~\ref{standard_part}). 
In general,  $d \colon M^2\rightarrow  X$ does not extend to a metric on $\EuScript{U}$. %as in $\langle ^{*}M,\, ^{*} X\rangle$ there might be pairs of elements at positive infinitesimal distance (this is certainly the case when  $d$ takes arbitrarily small positive values). 
For example, if $d$ takes arbitrarily small positive values, by compactness we can find in \( \EuScript{U} \) pairs of distinct elements at distance $0$ from one another. However, it is true that the interpretation of $d$ in \( \EuScript{U} \) is a pseudometric, that we still denote by $d$. 
Therefore the notions of uniformly continuous function, of convergent sequence, and alike  make sense in all finite powers of $\EuScript{U}$.
 To simplify the notation, for every \( \varepsilon \in [0,1] \) we use \( d(x,y) \leq \varepsilon \) as an abbreviation for the \( \FF \)-atomic formula \( d(x,y) \in [0,\varepsilon] \). This is naturally adapted to product spaces. For all \( n \geq 1 \), the product $\EuScript{U}^n$ is naturally equipped with the supremum pseudometric, still denoted by \( d \), which obviously extends the corresponding supremum metric on \( M^n \). For tuples  of variables \( x = \langle x_i \rangle_{1 \leq i \leq m} \) and \( \langle y \rangle_{1 \leq i \leq m} \) of the same finite length, we use \( d(x,y) \leq \varepsilon \) to abbreviate the conjunction \( \bigwedge_{1 \leq i \leq m} d(x_i,y_i) \leq \varepsilon \).

The uniform continuity of a given function $f \colon M^n\to M$ with \( f \in \LL \) can be formulated using a set of \( \FF \)-formulas. Indeed, by uniform continuity for all $0<\varepsilon\le 1$ there exists $0<\delta\le 1$ such that the following \( \FF \)-formula is satisfied in \( M \):
\[
\forall x \forall y \, \Big(d(x,y)\in [\delta, 1] \vee d(f(x),f(y))\in [0,\varepsilon]\Big).
\]
The \( \FF \)-type of all such sentences (one for each \( 0 <  \varepsilon \leq 1 \), paired with any suitable \( 0 < \delta \leq 1 \)) is preserved from \( M \) to any monster model \( \EuScript{U} \) with \( M \preccurlyeq^{\FF} \EuScript{U} \), hence the interpretation of \( f \) in \( \EuScript{U} \) remains uniformly continuous.
%This shows that uniform continuity is an $\FF$-type definable property, and so it is  preserved from $M$ to $\EuScript{U}.$ 
Similar considerations apply to uniformly continuous functions from $M^n$ to $ X$ belonging to \( \LL \), as well as to the functions defined in \( M \) by \( \LL \)-terms (which are necessarily uniformly continuous because by choice of \( \LL \) they are compositions of finitely many uniformly continuous functions).

\begin{lemma}\label{rmk_uncon}  Let $x$ and $y$ be tuples of variables of sort $\sf H$ and of equal length \( m \geq 1 \). Then 
\[
x\sim_{\sf S}y\leftrightarrow d(x,y)=0.
\]
\end{lemma}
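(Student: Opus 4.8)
The plan is to prove the two implications of the biconditional separately, evaluating everything in the monster model $\EuScript{U}$. Recall that $x \sim_{\sf S} y$ abbreviates the type of all formulas $\forall z\, t(x,z)=t(y,z)$ as $t$ ranges over terms of sort ${\sf H}^{|x|+|z|}\to{\sf S}$ with $|z|\in\omega$, while $d(x,y)=0$ abbreviates $\bigwedge_{1\le i\le m} d(x_i,y_i)\in\{0\}$. The two ingredients I will exploit are both already available before the lemma: in $\EuScript{U}$ the symbol $d$ denotes a pseudometric, and every $\LL$-term denotes a uniformly continuous function (since terms are compositions of finitely many uniformly continuous functions, and this uniform continuity is preserved from $M$ to $\EuScript{U}$).

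For the implication $x \sim_{\sf S} y \to d(x,y)=0$, I would use that $d$ itself supplies admissible test terms. Fix $a,b\in\EuScript{U}^m$ with $a\sim_{\sf S} b$. For each $1\le i\le m$, the expression $t_i:=d(x_i,z)$, with $z$ a single variable of sort $\sf H$, is a term of sort ${\sf H}^{m+1}\to{\sf S}$, so $a\sim_{\sf S} b$ yields in particular $a\sim_{t_i} b$, i.e.\ $\EuScript{U}\models\forall z\, d(a_i,z)=d(b_i,z)$. Instantiating $z:=b_i$ and using $d(b_i,b_i)=0$ gives $d(a_i,b_i)=0$. Since this holds for every $i$, we conclude $\EuScript{U}\models d(a,b)=0$.

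For the converse $d(x,y)=0 \to x\sim_{\sf S} y$, I would argue by uniform continuity. Fix $a,b$ with $\EuScript{U}\models d(a,b)=0$, so $d(a_i,b_i)=0$ for all $i$, and let $t(x,z)$ be an arbitrary term of sort ${\sf H}^{m+\ell}\to{\sf S}$ with $\ell=|z|$. Since $t$ denotes a uniformly continuous map on $\EuScript{U}^{m+\ell}$, for every $c\in\EuScript{U}^\ell$ the tuples $(a,c)$ and $(b,c)$ have supremum pseudodistance $\max(d(a,b),d(c,c))=0$; feeding this into the modulus of uniform continuity of $t$ gives $|t(a,c)-t(b,c)|\le\varepsilon$ for every $\varepsilon>0$, whence $t(a,c)=t(b,c)$ in $[0,1]$. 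As $c$ was arbitrary, $\EuScript{U}\models\forall z\, t(a,z)=t(b,z)$, i.e.\ $a\sim_t b$; and as $t$ was arbitrary, $a\sim_{\sf S} b$. The only genuinely substantive step, and the one I expect to require the most care, is this converse direction: it relies on the transfer of uniform continuity of terms to $\EuScript{U}$, together with the observation that on the honest metric space $[0,1]$ vanishing distance between the two term-values forces their equality, so that a pseudometric phenomenon on the home sort collapses to genuine equality on the space sort. The forward direction, by contrast, is a routine substitution once one spots that $d(x_i,z)$ is itself a legitimate test term.
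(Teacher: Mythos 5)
Your proof is correct and follows essentially the same route as the paper's: the forward direction uses $d$ itself as a test term and substitutes a component of one of the tuples for $z$, while the converse invokes the uniform continuity (transferred to $\EuScript{U}$) of the function defined by an arbitrary term $t$ to conclude that the term values agree on arguments at pseudodistance $0$. No gaps.
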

%\todo{Implementato suggerimento di Luca}
\begin{proof}
Consider arbitrary \( a,b \in \EuScript{U}^m \). 
If $a\sim_{\sf S}b$, then $\forall z \,\ d(a,z)=d(b,z)$. Hence $d(a,a)=d(b,a)$, from which $d(a,b)=0$ follows.

For the converse implication, assume that $d(a,b)=0$. Let $t(x,z)$ be any term of sort ${\sf H}^{m+k}\rightarrow {\sf S}$ for some $k\in\omega$. 
 %Since all functions in $M$ are assumed to be uniformly continuous, the term $t(x,z)$ defines a uniformly continuous function in $M$ --- hence in $\EuScript{U}$. 
 As observed before this lemma, the term \( t(x,z) \) defines a uniformly continuous function in \( \EuScript{U} \), hence for all $c \in \EuScript{U}^k$ we have that $\vert  t(a,c),t(b,c)\vert\le 1/n$ holds in $\EuScript{U}$ for all $n > 0$. 
 %Therefore $t(x,z)=t(y,z)$, namely $\langle t(x,z),t(y,z)\rangle\in\Delta$.
 Therefore \( \forall z \,\ t(a,z)=t(b,z) \), and since \( t \) was arbitrary we get \( a \sim_{\sf S} b \).
\end{proof}

\begin{proposition} \label{prop:completemetricspaces}
Let $N \preccurlyeq^{\FF} \EuScript{U}$ be a model and let \( m \geq 1 \). For every $a\in \EuScript{U}^m$, the following are equivalent:
  \begin{enumerate-(1)}
    \item \label{prop:completemetricspaces-1}
    $p(x)={\FF}\mbox{-tp}(a/N)$ is a Cauchy type;
    \item \label{prop:completemetricspaces-2}
    \( a \in \EuScript{U}^m \) is a realization of an \( \sim_{\sf S} \)-invariant Cauchy type \( q(x) \subseteq \FF(N) \);
    \item \label{prop:completemetricspaces-3} 
    there is a sequence $\langle a_k\rangle_{k \in \omega} $ in \( N^m \)  that converges to $a$ with respect to \( d \).
    \end{enumerate-(1)}
  \end{proposition}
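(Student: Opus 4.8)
The plan is to establish the cycle \ref{prop:completemetricspaces-3}~$\Rightarrow$~\ref{prop:completemetricspaces-2}~$\Rightarrow$~\ref{prop:completemetricspaces-1}~$\Rightarrow$~\ref{prop:completemetricspaces-3}, where the first two implications are soft and the third carries all the weight.

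For \ref{prop:completemetricspaces-3}~$\Rightarrow$~\ref{prop:completemetricspaces-2}, suppose $\langle a_k \rangle_{k \in \omega}$ in $N^m$ converges to $a$ and set $c_k = d(a,a_k)$, so that $c_k \to 0$. I would take $q(x)$ to be the $\FF(N)$-type $\{\, d(x,a_k) \leq c_k \mid k \in \omega \,\}$. It is realised by $a$ (hence consistent), and since each of its formulas contains no $\LL_{\sf H}$-subformula, $q(x)$ is $\sim_{\sf S}$-invariant by the remark following Proposition~\ref{fact_productUniformity}. It is Cauchy because if $b,c \models q$ then $d(b,c) \leq d(b,a_k) + d(a_k,c) \leq 2c_k$ for every $k$, whence $d(b,c)=0$, i.e.\ $b \sim_{\sf S} c$ by Lemma~\ref{rmk_uncon}. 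For \ref{prop:completemetricspaces-2}~$\Rightarrow$~\ref{prop:completemetricspaces-1}, if $a$ realises a $\sim_{\sf S}$-invariant Cauchy type $q(x) \subseteq \FF(N)$ then $q(x) \subseteq p(x) := \FF\mbox{-tp}(a/N)$; since $p(x) \wedge p(y) \to q(x) \wedge q(y) \to x \sim_{\sf S} y$, the type $p$ is again Cauchy, and it is consistent because realised by $a$.

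The substantial implication is \ref{prop:completemetricspaces-1}~$\Rightarrow$~\ref{prop:completemetricspaces-3}, for which it suffices to produce, for every $k \geq 1$, a point $a_k \in N^m$ with $d(a_k,a) \leq 1/k$. Fix $k$ and write $\varepsilon = 1/k$. Since $p$ is Cauchy and $a \models p$, we have $p(x) \wedge p(y) \to d(x,y)=0$, so by Proposition~\ref{fact_compactness_imp}\ref{fact_compactness_imp-2} there is a single $\psi \in \FF(N)$ (a finite conjunction of members of $p$, satisfied by $a$) with $\psi(x) \wedge \psi(y) \to d(x,y) \leq \varepsilon/4$. The key auxiliary step, which I would isolate as a \emph{diameter lemma}, is this: for any $\FF(N)$-formula $\theta$ with $\theta(x) \wedge \theta(y) \to d(x,y) \leq \delta$ and any $\delta' > \delta$, there is an approximation $\theta' > \theta$ with $\theta'(x) \wedge \theta'(y) \to d(x,y) \leq \delta'$. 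I would prove this by contradiction: otherwise the type $\{\theta\}'(x) \cup \{\theta\}'(y) \cup \{ d(x,y) \in [\delta'',1] \}$, for some $\delta < \delta'' < \delta'$, is finitely satisfiable (using that finitely many approximations of $\theta$ admit a common stronger approximation), hence realised in the $\FF$-saturated $\EuScript{U}$ by a pair $x^*,y^*$; by Proposition~\ref{prop_approx} (applicable since $\EuScript{U}$ is $\omega$-$\FF$-saturated) $x^*$ and $y^*$ then satisfy $\theta$ itself while $d(x^*,y^*) \geq \delta'' > \delta$, contradicting the hypothesis on $\theta$. Note that $\theta$ has only finitely many parameters, so $\omega$-$\FF$-saturation alone suffices here.

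Applying the diameter lemma to $\psi$ with $\delta' = \varepsilon/2$ gives $\psi' > \psi$ with $\psi'(x) \wedge \psi'(y) \to d(x,y) \leq \varepsilon/2$. Proposition~\ref{prop_LHapprox1} then yields $\chi \in \HH(N)$ with $\psi \to \chi \to \psi'$; thus $\chi$ is satisfied by $a$ and, as $\chi \to \psi'$, still satisfies $\chi(x) \wedge \chi(y) \to d(x,y) \leq \varepsilon/2$. Applying the diameter lemma once more, now to $\chi$ with $\delta' = \varepsilon$, produces $\chi' > \chi$ with $\chi'(x) \wedge \chi'(y) \to d(x,y) \leq \varepsilon$. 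Finally $\exists x\, \chi(x)$ holds (witnessed by $a$) and $\chi \in \HH(N)$ has no free variables of sort $\sf S$, so the Tarski--Vaught test, Proposition~\ref{prop_Tarski_Vaught}\ref{prop_Tarski_Vaught-2}, provides $a_k \in N^m$ with $\chi'(a_k)$. Since $a$ satisfies $\chi$ and hence $\chi'$, both $a$ and $a_k$ satisfy $\chi'$, whence $d(a_k,a) \leq \varepsilon = 1/k$, as required. The main obstacle is exactly the tension the diameter lemma is designed to relieve: the Tarski--Vaught test only returns witnesses in $N$ for \emph{approximations} $\chi'$ of an $\HH$-formula, and such weaker witnesses would a priori escape the small-diameter set cut out by the strong formula; the device of transferring smallness to the chosen approximation $\chi'$ before pulling the witness down, together with the $\FF$-to-$\HH$ passage via Proposition~\ref{prop_LHapprox1}, is the only delicate point, everything else reducing to $\FF$-compactness and the saturation of $\EuScript{U}$.
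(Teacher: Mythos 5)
Your proof is correct, and while the implications \ref{prop:completemetricspaces-3}~$\Rightarrow$~\ref{prop:completemetricspaces-2}~$\Rightarrow$~\ref{prop:completemetricspaces-1} coincide with the paper's, your treatment of \ref{prop:completemetricspaces-1}~$\Rightarrow$~\ref{prop:completemetricspaces-3} is genuinely different. The paper keeps the parameter $a$ in play throughout: from $p(x)\to x\sim_{\sf S}a$ it gets $p'(x)\to d(a,x)\le 2^{-k}$, applies Proposition~\ref{fact_compactness_imp}\ref{fact_compactness_imp-2} to the \emph{approximated} type $p'(x)$ against the formula $d(a,x)\le 2^{-k}$ to extract a single approximation $\varphi'_k$ of a formula of $p$ with $\varphi'_k(x)\to d(a,x)\le 2^{-k+1}$, and then invokes Remark~\ref{rem_fin-approx-sat} to realize $\varphi'_k$ in $N$. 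You instead work with a symmetric, parameter-free-in-$a$ quantity --- the diameter of the set cut out by a finite conjunction $\psi$ from $p$ --- and prove a ``diameter lemma'' transferring a diameter bound from $\psi$ to one of its approximations; your saturation-plus-contradiction proof of that lemma is sound (it is in effect the same compactness argument the paper runs on $p'(x)$, specialized to $\{\psi\}'(x)\cup\{\psi\}'(y)$). The subsequent detour through Proposition~\ref{prop_LHapprox1} and the Tarski--Vaught test is valid but not necessary: since $\psi$ is consistent (witnessed by $a$), Remark~\ref{rem_fin-approx-sat} already yields a realization of any $\psi'>\psi$ in $N$ directly, without passing to an $\HH$-formula. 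What your route buys is a reusable quantitative statement (small diameter survives approximation) and an argument that never mentions the external point $a$ inside the formulas handed to $N$; what the paper's route buys is brevity, since applying compactness to $p'(x)$ produces the needed approximation in one step. One small expository point: when you invoke Proposition~\ref{fact_compactness_imp}\ref{fact_compactness_imp-2} to get $\psi(x)\wedge\psi(y)\to d(x,y)\le\varepsilon/4$, the compactness argument is applied to the type $p(x)\cup p(y)$ in the joint tuple of variables, and the finite conjunction it returns has the form $\psi_1(x)\wedge\psi_2(y)$; you should say explicitly that you then take $\psi=\psi_1\wedge\psi_2$, which is still a conjunction of members of $p$ satisfied by $a$.
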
 

\begin{proof}
\ref{prop:completemetricspaces-1} \( \Rightarrow \) \ref{prop:completemetricspaces-3}. 
  As $p(x)$ is a Cauchy type, we have $p(x)\rightarrow x\sim_{\sf S} a$ (see the first part of the  proof of \ref{lem_Cauchyequiv-1} \( \Rightarrow \) \ref{lem_Cauchyequiv-2} in Lemma~\ref{lem_Cauchyequiv}, which does not require \( \sim_{\sf S} \)-invariance).
 By Lemma~\ref{rmk_uncon} and Proposition~\ref{prop_approx},  we have $p'(x)\rightarrow d(a,x)\le 2^{-k}$, for all $k \in \omega$.
 Since for \( k \geq 1 \) the interval \( [0,2^{-k+1}] \) is a closed neighborhood of \( [0,2^{-k}] \) in the space \( X = [0,1] \),
by  Proposition~\ref{fact_compactness_imp}\ref{fact_compactness_imp-2} applied to \( p'(x) \) and \( d(a,x) \leq 2^{-k} \) we get that for each $k\geq 1$ there is a formula $\varphi_k(x)\in p(x)$ and an approximation \( \varphi'_k(x) \in p'(x) \) of it such that $\varphi'_k(x)\rightarrow d(a,x)\le 2^{-k+1}$. (Here we use that \( p(x) \) and \( p'(x) \) are closed under finite conjunctions.) Since \( \varphi_k(x) \) is satisfiable in \( \EuScript{U} \), as witnessed by \( a \), by Remark~\ref{rem_fin-approx-sat} for each $k\in\omega$ there  is  $a_k\in M$ such that $M \models \varphi'_k(a_k)$, and hence also \( \EuScript{U} \models \varphi'_k(a_k) \).
  As $d(a,a_k)\le 2^{-k+1}$ for all $k \geq 1$, the sequence $\langle a_k \rangle_{k \in\omega}$ converges to $a$ with respect to the pseudometric \( d \).

\ref{prop:completemetricspaces-3} \( \Rightarrow \) \ref{prop:completemetricspaces-2}.
For each $k\in\omega$, let $\varepsilon_k=d(a_k,a)$.
  Then $\langle \varepsilon_k\rangle_{k\in \omega}$ is a sequence of non-negative reals  converging to $0$.    
  Let \( q(x) = \big\{ d(a_k,x) \leq \varepsilon_k \MID k \in \omega \big\} \).
  All the \( \FF \)-atomic formulas $d(a_k,x)\leq\varepsilon_k$ are \( \sim_{\sf S} \)-invariant because they do not contain \( \LL_{\sf H} \)-subformulas, hence \( q(x) \) is \( \sim_{\sf S} \)-invariant too, and \( a \) realizes \( q(x) \). It remains to prove that \( q(x) \) is a Cauchy type. 
It is obviously consistent, as witnessed by \( a \).
Moreover, by the triangle inequality every realization \( b \in \EuScript{U} \) of \( d(a_k,x) \leq \varepsilon_k \) is such that \( d(b,a) \leq 2 \varepsilon_k \). Thus every realization of $q(x)$ is at distance $0$ from $a$,
hence \( q(x) \wedge q(y) \to d(x,y) = 0 \) by the triangle inequality.
By Lemma~\ref{rmk_uncon}, this means that \( q(x) \wedge q(y) \to x \sim_{\sf S} y \) and we are done.  

\ref{prop:completemetricspaces-2} \( \Rightarrow \) \ref{prop:completemetricspaces-1}.
Clear, since \( q(x) \subseteq p(x) \).
\end{proof}

\begin{proposition} \label{prop:cauchycompletenessformetric}
For every model \(N \preccurlyeq^{\FF} \EuScript{U} \), the following are equivalent:
\begin{enumerate-(1)}
\item \label{prop:cauchycompletenessformetric-1}
\( N \) is Cauchy complete;
\item \label{prop:cauchycompletenessformetric-2}
\( N \) is \( d \)-Cauchy complete, that is, every \( d \)-Cauchy sequence in \( N \) converges to a point in \( N \).
\end{enumerate-(1)}
\end{proposition}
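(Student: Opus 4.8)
The plan is to derive both implications from Proposition~\ref{prop:completemetricspaces}, which ties Cauchy types to convergent sequences, together with Lemma~\ref{rmk_uncon}, which identifies \( x \sim_{\sf S} y \) with \( d(x,y) = 0 \). The two notions of completeness differ only in whether the relevant limiting data is packaged as a type or as a sequence, and the passage between the two is exactly what Proposition~\ref{prop:completemetricspaces} supplies; everything else is bookkeeping with the triangle inequality.

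For \ref{prop:cauchycompletenessformetric-1}~\( \Rightarrow \)~\ref{prop:cauchycompletenessformetric-2}, I would start from a \( d \)-Cauchy sequence \( \langle a_k \rangle_{k \in \omega} \) in \( N^m \) and set \( \varepsilon_k = \sup_{l \geq k} d(a_k, a_l) \), which tends to \( 0 \) precisely because the sequence is Cauchy. The candidate is the type
\[
q(x) = \big\{ d(a_k, x) \leq \varepsilon_k \MID k \in \omega \big\} \subseteq \FF(N),
\]
which is the type already produced in the proof of \ref{prop:completemetricspaces-3}~\( \Rightarrow \)~\ref{prop:completemetricspaces-2}. I would check that \( q \) is \( \sim_{\sf S} \)-invariant, since none of its \( \FF \)-atomic formulas contains an \( \LL_{\sf H} \)-subformula; that it is consistent, because any finite subset is satisfied in \( N \) by \( a_K \) for the largest index \( K \) appearing (using \( \varepsilon_{k} \geq d(a_k,a_K) \) whenever \( K \geq k \)), so \( q \) is finitely satisfiable in \( N \) and hence realized in \( \EuScript{U} \) by saturation; and that it is Cauchy, since any two realizations \( b, c \) satisfy \( d(b,c) \leq 2\varepsilon_k \to 0 \), whence \( d(b,c) = 0 \) and \( b \sim_{\sf S} c \) by Lemma~\ref{rmk_uncon}. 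Cauchy completeness of \( N \) then yields \( b \in N^m \) realizing \( q \), and \( d(a_k, b) \leq \varepsilon_k \to 0 \) shows that \( \langle a_k \rangle \) converges to \( b \) in \( N \).

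For \ref{prop:cauchycompletenessformetric-2}~\( \Rightarrow \)~\ref{prop:cauchycompletenessformetric-1}, I would take a \( \sim_{\sf S} \)-invariant Cauchy type \( p(x) \subseteq \FF(N) \), which is consistent by definition, and realize it by some \( a \in \EuScript{U}^m \) using saturation of \( \EuScript{U} \). Since \( a \) then satisfies condition~\ref{prop:completemetricspaces-2} of Proposition~\ref{prop:completemetricspaces} with \( q = p \), the implication \ref{prop:completemetricspaces-2}~\( \Rightarrow \)~\ref{prop:completemetricspaces-3} gives a sequence \( \langle a_k \rangle \) in \( N^m \) converging to \( a \). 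Being convergent, \( \langle a_k \rangle \) is \( d \)-Cauchy, so by \( d \)-Cauchy completeness of \( N \) it converges to some \( b \in N^m \). Then \( a_k \to a \) and \( a_k \to b \) force \( d(a,b) = 0 \), i.e.\ \( a \sim_{\sf S} b \) by Lemma~\ref{rmk_uncon}; since \( p \) is \( \sim_{\sf S} \)-invariant and \( a \) realizes \( p \), so does \( b \), and hence \( p \) is realized in \( N \).

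The only point requiring real care is the transfer of realizations across the pseudometric collapse. In the reverse direction one uses \( \sim_{\sf S} \)-invariance crucially: the \( N \)-limit \( b \) of the approximating sequence need not satisfy \( p \) formula by formula for any manifest reason, and it is only the identification \( a \sim_{\sf S} b \) together with invariance of \( p \) that forces \( b \) to realize the original type. Dually, in the forward direction the content is hidden in verifying that \( q \) is genuinely Cauchy and consistent, which is where the saturation of \( \EuScript{U} \) and Lemma~\ref{rmk_uncon} enter. I expect no further obstacles, as the heavy lifting is already carried out in Proposition~\ref{prop:completemetricspaces}.
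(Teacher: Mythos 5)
Your proposal is correct and follows essentially the same route as the paper: in the forward direction both build the $\sim_{\sf S}$-invariant Cauchy type $\{d(a_k,x)\le\varepsilon_k \MID k\in\omega\}$ (the paper just normalizes $\varepsilon_k=2^{-k}$ by passing to a subsequence, while you take $\varepsilon_k=\sup_{l\ge k}d(a_k,a_l)$), and in the reverse direction both pass through Proposition~\ref{prop:completemetricspaces} to get an approximating sequence in $N^m$, take its $d$-limit $b$, and use $d(a,b)=0$, Lemma~\ref{rmk_uncon}, and $\sim_{\sf S}$-invariance to transfer the realization to $b$. The only (harmless) point left tacit is the reduction of $d$-Cauchy completeness of $N^m$ to that of $N$, which the paper also dispatches in one line.
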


\begin{proof}
\ref{prop:cauchycompletenessformetric-1} \( \Rightarrow \) \ref{prop:cauchycompletenessformetric-2}.
Let \( \langle a_k \rangle_{k \in \omega} \) be a \( d \)-Cauchy sequence in \( N \). By working with a subsequence if necessary, we may assume that \( d(a_i,a_j) \leq 2^{-k} \) for every \( k \in \omega \) and every \( i,j \geq k \). 
The \( \FF \)-type \( p(x) =  \{ d(a_k,x) \leq 2^{-k} \mid k \in \omega \} \) is finitely realized in \( \EuScript{U} \): if \( k \) is largest such that \( d(a_k,x) \leq 2^{-k} \) occurs in a given finite subset of \( p(x) \), then \( a_k \) is a realization of such a set. By saturation,
\( p(x) \subseteq \FF(N) \) is realized by some \( a \in \EuScript{U} \), and it is thus consistent. Moreover, it is \( \sim_{\sf S} \)-invariant because none of its elements contain an \( \LL_{\sf H} \)-subformula, and \( p(x) \wedge p(y) \to x \sim_{\sf S} y \) by the proof of  \ref{prop:completemetricspaces-3} \( \Rightarrow \) \ref{prop:completemetricspaces-2} in Proposition~\ref{prop:completemetricspaces} (where now we set \( \varepsilon_k = 2^{-k} \)). Thus \( p(x) \) is realized by some \( b \in N \) by~\ref{prop:cauchycompletenessformetric-1}. By definition of \( p(x) \), the sequence \( \langle a_k \rangle_{k \in \omega} \) converges to \( b \) and we are done.

\ref{prop:cauchycompletenessformetric-2} \( \Rightarrow \) \ref{prop:cauchycompletenessformetric-1}.
Notice that \( N \) is \( d \)-Cauchy complete if and only if so is each finite product \( N^m \) for \( m \geq 1 \).
Let \( q(x) \subseteq \FF(N) \), with \( |x| = m \), be an \( \sim_{\sf S} \)-invariant Cauchy type, and let
\( a \in \EuScript{U}^{m} \) be a realization of it. 
By Proposition~\ref{prop:completemetricspaces}, there is a sequence \( \langle a_k \rangle_{k \in \omega} \) in \( N^{m} \) which converges to \( a \). We may assume that \( \langle a_k \rangle_{k \in \omega} \) is \( d \)-Cauchy; thus it also converges to some \( b \in N^{m} \) by~\ref{prop:cauchycompletenessformetric-2}. Since both \( a \) and \( b \) are limits of the same sequence, we have \( d(a,b) = 0 \), and hence \( a \sim_{\sf S} b \) by Lemma~\ref{rmk_uncon}. Since \( q(x) \) is \( \sim_{\sf S} \)-invariant, \( b \in N^{m} \) realizes \( q(x) \).
\end{proof}

In particular, by Proposition~\ref{cauchy-compl-revisited} we get that \( \langle \EuScript{U},d \rangle \) is complete as a pseudometric space.

%%%%%%%%%%%%%%%%%%%%%
%%%%%%%%%%%%%%%%%%%%%
%%%%%%%%%%%%%%%%%%%%%
%%%%%%%%%%%%%%%%%%%%%
%%%%%%%%%%%%%%%%%%%%%
%%%%%%%%%%%%%%%%%%%%%
%%%%%%%%%%%%%%%%%%%%%
\newcommand\biburl[1]{\url{#1}}
\BibSpec{arXiv}{%
  +{}{\PrintAuthors}{author}
  +{,}{ \textit}{title}
  +{}{ \parenthesize}{date}
  +{,}{ arXiv:}{eprint}
  % +{,}{ \url}
}

\begin{bibdiv}
\begin{biblist}[]\normalsize

\bib{BYBHU}{book}{
label={BBHU},
author = {Ben Yaacov, Ita\"i},
author= {Berenstein, Alexander},
author={Henson, C. Ward},
author={Usvyatsov, Alexander}
title = {Model theory for metric structures},
publisher= {Cambridge University Press},
date={2010},
}

\bib{CK}{book}{
label={CK},
author= {Chang, Cheng-Chung},
author = {Keisler, Jerome H.},
title = {Continuous model theory},
publisher= {Princeton University Press},
date={1966},
}

\bib{D}{book}{
label={D},
author = {Davis, Martin},
title = {Applied Nonstandard Analysis},
publisher= {Dover},
date={2005},
}

%\bib{G}{article}{
 % label={G},
 % author = {Goldbring, Isaac},
%  title = {Lecture notes on nonstandard analysis},
 % conference={
 %   title={UCLA Summer School in Logic 2012}
 %  },
 % eprint = {https://www.math.uci.edu/~isaac/NSA\%20notes.pdf},
% }

% \bib{H}{arXiv}{
% label={H},
% author = {Hart, Bradd},
% title = {An Introduction to Continuous Model Theory},
% eprint={2303.03969},
% doi = {10.48550/arXiv.2303.03969},
% url = {https://arxiv.org/abs/},
% publisher = {arXiv},
% date = {2023},
% }

%\bib{HPP}{article}{
   %label={HPP},
   %author={Hrushovski, Ehud},
   %author={Peterzil, Ya'acov},
   %author={Pillay, Anand},
  % title={Groups, measures, and the NIP},
   %journal={J. Amer. Math. Soc.},
   %volume={21},
   %date={2008},
   %number={2},
   %pages={563--596},
   % issn={0894-0347},
   % review={\MR{2373360}},
    %doi={10.1090/S0894-0347-07-00558-9},
%}

\bib{HI}{article}{
  label={HI},
  author={Henson, C. Ward},
  author={Iovino, Jos\'{e}},
  title={Ultraproducts in Analysis},
  conference={
    title={Analysis and logic},
    address={Mons},
    date={1997},
   },
   book={
      series={London Math. Soc. Lecture Note Ser.},
      volume={262},
      publisher={Cambridge Univ. Press, Cambridge},
   },
   date={2002},
   pages={1--110},
  %  review={\MR{1967834}},
}

\bib{Kec}{book}{
label={Kec},
author = {Kechris, Alexander S.},
title = {Classical Descriptive Set Theory},
publisher= {Springer-Verlag},
date={1997},
}

\bib{K}{arXiv}{
  label={K},
  author = {Keisler, H. Jerome},
  title = {Model Theory for Real-valued Structures},
  eprint={2005.11851},
  doi = {10.48550/ARXIV.2005.11851},
  url = {https://arxiv.org/abs/2005.11851},
  publisher = {arXiv},
  date = {2020},
}

\bib{Ke}{book}{
label={Ke},
author = {Kelley, L. John},
title = {General Topology},
publisher= {Springer},
date={1955},
}

\bib{Z}{arXiv}{
  label={Z},
  author = {Zambella, Domenico},
  title = {Standard analysis},
  eprint={2311.15711},
  doi = {10.48550/arXiv.2311.15711},
  url = {https://arxiv.org/abs/2311.15711},
  publisher = {arXiv},
  date = {2023},
}

\end{biblist}
\end{bibdiv}

\end{document}